\title{The Cleavage Operad and String Topology of Higher Dimension}
\author{Tarje Bargheer}
\begin{document}
%\input{forside}
%\tableofcontents
%\pagebreak
%\section{Introduction}\label{introduction}
%\input{introduction.tex}
%\pagebreak
\maketitle
\section{Introduction}\label{intro}

%The first word of the story of String Topology sparked into being in \cite{ChasSullivan}, and the plot has evolved over the last decade now -- often accompanied by amazing geometric and visual effects. We shall do our best to conserve this narrative, expanding from the following: 
In \cite{UniversalAlgebra}, Voronov gave life to the cacti operad: An operad whose homology acts on the shifted homology of the free loop space over a compact, smooth, orientable manifold $M$ -- $\hh_*(M^{S^1})$ -- giving it the structure of a Batalin-Vilkovisky algebra; hereby recovering the Chas-Sullivan product of \cite{ChasSullivan}. As an intermediate operad, Kaufmann in \cite{SeveralCacti} gave an $E_2$-operad -- the spineless cacti operad -- whose homology acts to give a Gerstenhaber structure, underlying the Batalin-Vilkovisky structure of Chas and Sullivan's string topology; all reflected in the fact that appending the data of $\SO(2)$ to the spineless cacti suitably yields the cacti operad.

We follow the same general string of ideas, but generalize them by replacing $S^1$ with a manifold $N \subseteq \RR^{n+1}$ -- of arbitrary dimension -- embedded in euclidean space. Our methods will involve certain decompositions of $N$, and for convexly embedded spheres these decompositions are simple enough to obtain results within topology, our focus will thus take a shift towards $N:= S^n \subset \RR^{n+1}$ the unit sphere.

What we construct is a coloured operad that acts on $M^N$ -- the space of maps from $N$ to $M$ -- in a related manner to how the cacti operad acts on $M^{S^1}$. As revealed by the previous sentence, we found it necessary to broaden the scope of the use of the word 'operad' -- and enter the realm of coloured operads; coloured over topological spaces. As we describe in section \ref{operadgeneral}, this colouring is similar to picking a category internal to topological spaces, with traditional operads being one-object gadgets. We show in \ref{entheorem} that for $N = S^n$, the homotopy type of this operad is computable, using combinatorial methods of \cite{BergerCellular}, as a coloured $E_{n+1}$-operad. We then show how to form a semidirect product of this operad with $\SO(n+1)$, providing a $(n+1)$-Batalin-Vilkovisky structure on the homology of $M^{S^n}$ when $n$ is odd.

In \cite{StringTopologyNotes}[Ch. 5], an outline is given for a generalisation of the cacti operad to the $n$-dimensional cacti operad, by replacing lobes with copies of $S^n$ floating in $\RR^{n+1}$. Our original motivation was to explicitly compute the structure of this operad; attempting to construct homotopies that would provide equivalences between the little $(n+1)$-disks operad and the $n$-dimensional cacti operad, as was done in \cite{CultivatingTarje} for the 1-dimensional case. 
However, such attempts did not seem to have a shortcut, bypassing the structure of the diffeomorphism group $\Diff(S^n)$. 

Although we are working with coloured operads, and so give an operad different from the cacti operad, morally we take the stance of \cite{SeveralCacti} -- starting from a more rigid structure, where diffeomorphisms have no influence until the twisting of section \ref{defsym} can be inferred.
The coloured operad we define have an operadic structure that is basically given by cleaving $N$ into smaller submanifolds -- timber -- and therefore we dub the operad, the \emph{cleavage operad over $N$}, $\Cleav_N$.

%\begin{center}
\begin{figure}[h!tb]
\includegraphics[scale=0.6]{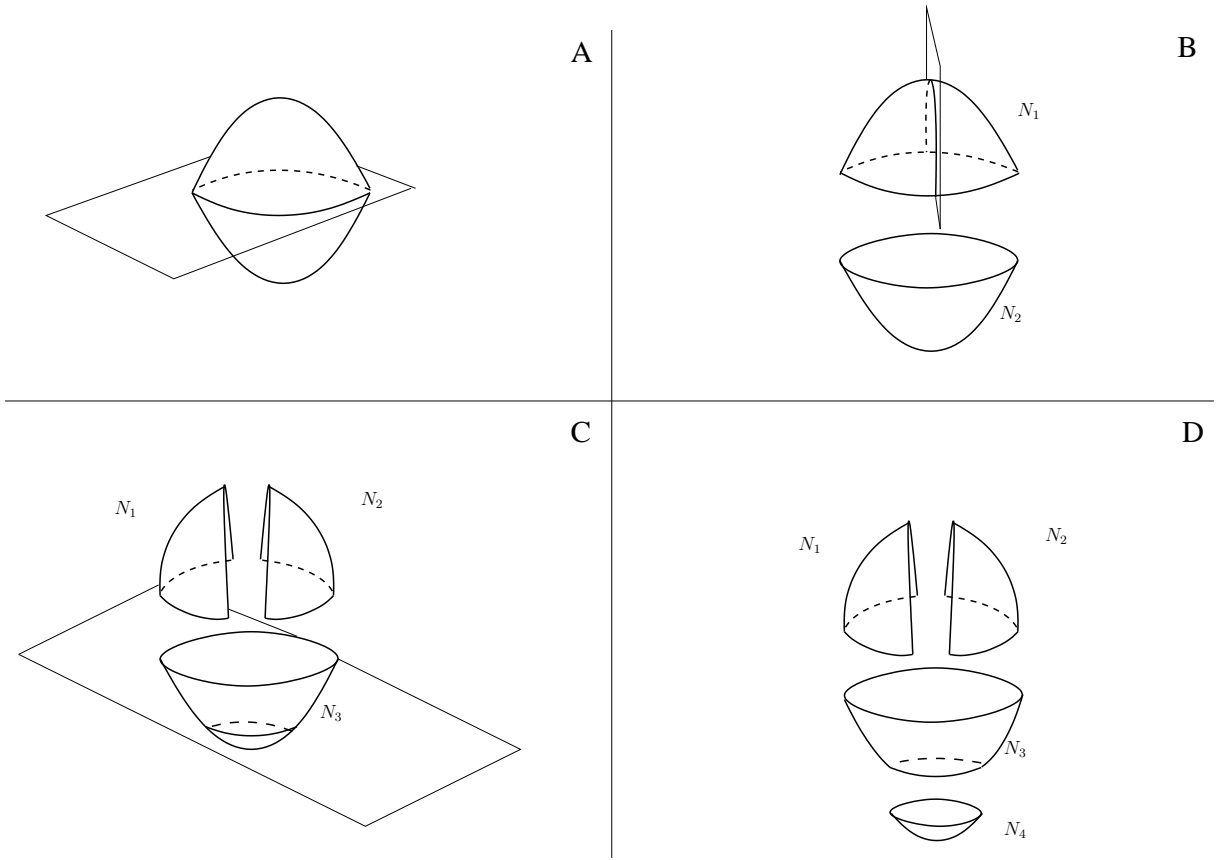}
\caption{Recursive procedure cleaving a sphere into four pieces of timber}
\label{cleavagepic}
\end{figure}
%\end{center}

An example of such a cleavage is given in figure \ref{cleavagepic}, starting in picture A, where $N$ is cleaved by a single hyperplane chopping $N$ into two pieces of timber $N_1$ and $N_2$ in picture B; and hence succesively cleaving the timber produced into smaller subsets of $N$. Note that the non-linear ordering in which the cleaving is done constitutes part of the data; for instance, interchanging the cuts made in A and B would yield a different cleavage.

However, in order to have an interesting topology, we need to forget as much of the ordering dictated by the indexing trees as possible. In \ref{NCleave}, we forget what needs to be forgotten by defining $\Cleav_N$ as a quotient of an operad with $k$-ary structure given as pairs $(T,\underline{P})$ where $T$ is a tree of arity $k$ expressing the ordering with which $N$ should be cleaved, and $\underline{P} = \{P_1,\ldots,P_{k-1}\}$ a tuple of affine hyperplanes -- conveying information on where to cleave by decorating the internal vertices of $T$.

In terms of figure \ref{cleavagepic}, this quotient would allow for an interchange of the ordering of hyperplanes, such that the hyperplane of C cleaves before A and B, as well as allowing an interchange of the hyperplanes of B and C.

\subsection*{Summary of Results}
As usual in string topology, fix a dimension $d$ of the manifold $M$, and let $\hh_*(-) := \h_{*+d}(-)$.

Recall that the correspondence category $\Corr(\mathcal{C})$ over a co-complete category $\mathcal{C}$ is given by letting $\Ob(\Corr(\mathcal{C})) = \Ob(\mathcal{C})$, and the set of morphisms from objects $X$ to $Y$, $\Corr(\mathcal{C})(X,Y)$ be given as isomorphism classes of diagrams $\xymatrix{X & Z\ar[r]\ar[l] & Y}$ where $Z$ is an arbitrary object, and the arrows are morphisms in $\mathcal{C}$. Composition of correspondences given by forming pull-backs is associative by the universal properties of the pull-back.

\begin{theorema}\label{actionintro}
The operad $\Cleav_N$ acts on $M^N$ in the category of correspondences over topological spaces. 

%Letting $N := S^n \subseteq \RR^{n+1}$ the unit sphere, and $M$ a compact, smooth, orientable $d$-manifold, $\h_*(\Cleav_{S^n})$ acts on $\hh_*(M^{S^n})$ 
\end{theorema}

We also indicate how this gives rise to the result that letting $N := S^n \subseteq \RR^{n+1}$ the unit sphere, and $M$ a compact, smooth, oriented $d$-manifold, $\h_*(\Cleav_{S^n})$ acts on $\hh_*(M^{S^n})$.

%Producing this homological action requires new techniques, and these are the focus of the paper \cite{BargheerPuncture}
%However, we defer the proof of this statement to chapter \ref{puncturedchapter}.

%As is the case for the Cacti operad, the first part of this theorem almost pops out in section \ref{coraction} directly from the construction of $\Cleav_N$. 

To examplify the action on correspondences, made precise in section \ref{cactiaction}, take the cleavage in picture D of figure \ref{cleavagepic}. Note first that covering $N$ by the submanifolds $N_1,N_2,N_3$ and $N_4$, the space of all pairwise intersections of the covering inside $N$ is a space with two components $C_1$ and $C_2$. Consider the space 
$$M^N_{C_1,C_2} := \{f \in M^N \mid f(C_1) = \{k_1\} \subseteq M, f(C_2) = \{k_2\} \subseteq M\}$$ -- i.e. the space of maps from $N$ to $M$ that are constant on all of $C_1$ and all of $C_2$. The correspondence $\xymatrix{M^N & M^N_{C_1,C_2}\ar[r]^{\phi}\ar[l]_{\iota} & \left(M^N\right)^4}$ is given with $\iota$ the inclusion. The other map $\phi$ maps into the $i$'th factor of $\left(M^N\right)^4$ by taking $\phi(f)(m) = f|_{N_i}(m)$ for $m \in N_i$ and for $m \notin N_i$ letting $\phi(f)(m)$ be the same constant as $f(C_i)$ where $C_i$ is the component separating $m$ from $N_i$.

In \cite{HomotopyString}, it was discovered that for the cacti operad, a passage to homology can be done via spectra to obtain so-called umkehr maps, which on homology provides a reversal of one the arrows in the correspondences. In \cite{BargheerPuncture}, the action is realized by taking homotopy groups of a stable map between spectra. %Again, this statement will be introduced in chapther \ref{puncturedchapter} 
%\cite{BargheerPuncture} 
In that paper, a further punctured version of the cleavage operad is introduced as technical assistance for the new ideas required to produce such an action. %See also \cite{BargheerThesis} for a draft-version in chapter 2.
 This present paper will only indicate how the action works pointwise in $\Cleav_{S^n}$, as well as the $2$-ary term of the operad -- providing reason to the specifics of the definitions of $\Cleav_{S^n}$.

%It turns out that working with the punctured cleavage operad corresponds to adjoining a unit to the algebra of $\Cleav_{S^n}$. We therefore stress that the algebra described in Theorem A is a non-untial algebra, and that the coloured operad $\Cleav_{S^n}$ does not have $0$-ary terms.

%venture for a description of a 'local' action of $\Cleav_{S^n}$, making an umkehr-map for certain contractible neighborhoods, covering $\Cleav_{S^n}$. Patching these neighborhoods together to an action of $\Cleav_{S^n}$ does not follow completely automatically. The Thom-spaces involved do change, and one performs an analysis of how this change occurs, much similar to what is done in \cite{GodinHigher}. After such an analysis, one do get a stable action of $\Cleav_{S^n}$ as described in \ref{actionintro}. We give enough information in section \ref{umkehr} to show that one should not hope for more than a stable action -- however, in order to not diverge the exposition on this analysis, we refer to our upcoming thesis for the full patching of local actions together to one over all of $\Cleav_{S^n}$.

Section \ref{encomputation} and beyond are used to show the following:

\begin{theoremb}\label{enintro}
The coloured operad $\Cleav_{S^n}$ is a coloured $E_{n+1}$-operad. Taking a semidirect product of $\Cleav_{S^n}$ by $\SO(n+1)$ provides a coloured operad $\Cleav_{S^n} \rtimes {\SO(n+1)}$ whose homology coincides with the framed little $(n+1)$-disks operad. 
\end{theoremb}

Theorem \cite[5.4]{WahlBat} gives a statement on how these actions of $\Cleav_{S^n} \rtimes {\SO(n+1)}$ relates to higher Batalin-Vilkovisky algebras. 

The main technicality in proving the above theorem is the first statement of a coloured $E_{n+1}$-operad. We apply combinatorial methods of \cite{BergerCellular} to show this theorem in section \ref{encomputation}. The final part of the statement follows in section \ref{defsym} by the construction of semidirect products as given in \cite{WahlBat}. Briefly, since $\Cleav_{S^n}$ and its action on $M^{S^n}$ is well-behaved with respect to an action of $\SO(n+1)$, induced from its action on $\RR^{n+1}$, we can apply a coloured construction of a semidirect product. 

Theorem B improves on several results in the litterature; compared to \cite[prop. 22]{GruherSalvatore}, we give an $E_{n+1}$-structure instead of an $E_n$-structure. The geometry we introduce can also be seen as an expansion of the ring structure on higher loop spaces given in \cite[Th. 1.1 + Cor. 1.2]{KallelSalvatoreRational}. Furthermore, the geometric nature of our construction allows for the higher version of Batalin-Vilkovisky algebras that is not part of the statement in \cite{PoHuMapping}. 

%$\Cleav_{S^n}$ by $\SO(n+1)$. As an aside, we also show that it is possible to extend the action by the larger group $\Diff(S^n)$; however, in this case the action will not fit into a notion that generalizes semi-direct product. 

%Condensing the above two theorems -- using that the twisted operad acts just as well as the untwisted version, we get the following statement: 

%\begin{corollaryab}\label{bvintrotheorem}
%Let $M$ be a $d$-dimensional orientable, smooth and compact manifold, $\hh_*(M^{S^n})$ carries the structure of a $(n+1)$-Batalin-Vilkovisky algebra.
%\end{corollaryab}

\subsection*{Acknowledgement}
%This article constitues a portion of the authors PhD-thesis at the Centre for Symmetry and Deformation, located at the University of Copenhagen. The vibrancy, people and activities at the center has been a great boon for the research gone into this paper.

In producing the material for this paper, constituting part of my PhD-thesis, I wish to first and foremost thank my PhD-supervisor Nathalie Wahl for her incredible help in getting this work done, as well as Craig Westerland who has hosted me for large parts of my PhD program. Several other mathematicians have pushed me onwards in my thinking, and I wish to thank Joan Mill\'es, S\o ren Galatius and Alexander Berglund. Furthermore, several useful comments were given by my committee in the evaluation of my PhD-thesis, and these have been incorporated into my thesis; I therefore also want to thank Paolo Salvatore, Ulrike Tillmann and Ib Madsen.

The material of the section was conceived at the Centre for Symmetry and Deformation at University of Copenhagen. I wish to thank the Danish National Research Foundation for providing the financial support, making this incredible place to do mathematics a possibility.

A substantial revision of the work was carried out at my time as a Leibniz Fellow at the Mathematical Research Institute of Oberwolfach. I wish to thank the Institute for providing me with this wonderful opportunity for extreme focus.

%I wish to show special gratitude to Craig Westerland for starting me up on thinking structured about higher dimensional operations -- and his 6 months of encouragement while I was on a sabbatical down under, and Nathalie Wahl, my supervisor, her mathematical falcon eye and an incredible amount of support -- showing timely faith in my mathematical devisings. 
%This article constitutes a portion of the authors PhD-thesis at University of Copenhagen. I owe special thanks to my supervisor, Nathalie Wahl, first and foremost for the incredible amount of she has given me, but also for her uncanny ability to know when to believe in me -- and when not to. Craig Westerland also  

\section{Operadic and Categorical Concepts}\label{operadgeneral}
In this section we shall introduce language from the world of operads, used troughout. For a general overview, we refer to the category theoretic \cite[2.1]{LeinsterHigher}, where they are called \emph{multicategories}.

\begin{definition}\label{colouroperaddefine}
A \emph{coloured operad} $\mathcal{C}$ consist of 
\begin{itemize}
\item A class $\Ob(\mathcal{C})$ of \emph{objects} or \emph{colours}.
\item For each $k \in \NN$ and $a,a_1,\ldots,a_k \in \Ob(\mathcal{C})$, a class of $k$-ary morphisms denoted $\mathcal{C}\left(a;a_1,\ldots,a_k\right)$. 
\item To $i \in \{1,\ldots,k\}$, $f \in \mathcal{C}(a;a_1,\ldots,a_k)$ an $k$-ary morphism and $g \in \mathcal{C}(a_i;b_1,\ldots,b_n)$ an $n$-ary morphism, we require an operadic composition of arity $k+n-1$, $f \circ_i g \in \mathcal{C}(a;a_1,\ldots,a_{i-1},b_1,\ldots,b_n,a_{i+1},\ldots,a_k)$.
\item Units $\One_a \in \mathcal{C}(a;a)$ for any object $a$.
\item An action of $\Sigma_k$. I.e., given $\sigma \in \Sigma_k$ a map $\sigma. \colon \mathcal{C}(a;a_1,\ldots,a_k) \to \mathcal{C}(a;a_{\sigma(1)},\ldots,a_{\sigma(k)})$ for all $k \in \NN$ and $a_i \in \Ob(\mathcal{C})$.
\end{itemize}

These are subject to the following conditions, where we to a $H \subseteq \{1,\ldots, m\}$, denote by $\Sigma_H$ the permutation group of the elements of $H$. As by convention let $\Sigma_{|H|}$ denote the permutation group on the first $|H|$ natural numbers. The unique monotone map $H \to \{1,\ldots,|H|\}$ defines an isomorphism $\rho_H \colon \Sigma_H \to \Sigma_{|H|}$:

\begin{itemize}
\item Associativity: For $f \in \mathcal{C}\left(a;a_1,\ldots,a_k\right), g \in \mathcal{C}\left(a_i;b_1,\ldots,b_m\right), h \in \mathcal{C}\left(b_j;c_1,\ldots,c_l\right)$ the identity $$f \circ_i \left( g \circ_j h\right)  = \left(f \circ_i g \right) \circ_{j+i -1} h$$
holds for $i \in \{1,\ldots,k\}$ and $j \in \{1,\ldots,m\}$. For further $s \in \mathcal{C}(a_r;d_1,\ldots,d_u)$ where $i < r$ and $u \in \NN$ we require that $$(f \circ_i g) \circ_r s = (f \circ_r s ) \circ_{i+u-1} g.$$
\item $\Sigma_k$-equivariance: For $\sigma \in \Sigma_{k+m}$ and $f \in \mathcal{C}(a;a_1,\ldots,a_k), g \in \mathcal{C}(a_i;b_1,\ldots,b_m)$ the identity
$$
\sigma.(f \circ_i g) = \sigma|_{I}.f \circ_{\sigma|_{I}(i)} \sigma|_J.g
$$
holds where $I := \{1,\ldots,i,i+m+1,\ldots,k+m\}$ is the set of integers from $1$ to $k+m$ excluding the set $J:= \{i+1,\ldots,i+m\}$. For $H \subseteq \{1,\ldots,k+m\}$ the permutation $\sigma|_H \in \Sigma_{|H|}$ is given, using $\rho_H$ from the top of this definition, as $\sigma|_H := \rho_H\left(\widetilde{\sigma|_H}\right)$ where in turn $\widetilde{\sigma|_H} \in \Sigma|_H$ is defined from $\sigma$ by requiring that to $r,p \in H$ we have $\widetilde{\sigma|_H}(r) < \widetilde{\sigma|_H}(p)$ whenever $\sigma(r) < \sigma(p)$ so $\sigma|_H$ permutes the ordered symbols of $H$ in the same way that $\sigma$ permutes $\{1,\ldots,|H|\}$.
\item Unit-identity: For $f \in \mathcal{C}(a;a_1,\ldots,a_k)$ we have 
$$
f \circ_i \One_{a_i} = f \textrm{ and } \One_{a} \circ_1 f = f
$$
for all $i \in \{1,\ldots,k\}$.
\end{itemize}

%Coloured operads can come with an action of the symmetric group $\Sigma_n$ on each class $\mathcal{C}(a;a_1,\ldots,a_n)$ of $n$-ary morphisms for all $n \in \NN$. We stress this by prepending to \emph{symmetric coloured operad}

%For symmetric coloured operads, the action of the symmetric group is subject to an equivariance condition for the $\circ_i$-composition analogous to that of classical operads.
\end{definition}

Indeed, a classical \emph{operad} is simply a coloured operad with a single object. We shall refer to such gadgets as \emph{monochrome operads}. On the other hand, a category $\mathcal{C}$ is the same as a coloured operad with $\mathcal{C}(a;a_1,\ldots,a_k) = \emptyset$ for $k > 1$.

Familiar concepts like functors, hom-sets and adjoints are extended in the obvious ways to this multi-arity setting. %For symmetric coloured operads, these concepts are required to be $\Sigma_k$-equivariant as well.

%\begin{example}
%By a \emph{rooted tree} we shall understand a graph with no cycles, where one of the valence 1 vertices is designated the \emph{root} and the rest of the valence 1 vertices called leafs.

%An archetypical operad we shall use is the single object -- or monochrome -- operad \emph{tree operad}, $\Tree$ having as $k$-ary morphisms all trees with $k$ leaf. The operad becomes symmetric by choosing a bijection between $\{1,\ldots,k\}$ and the set of leafs, called a \emph{labelling} of the leafs. The action of $\Sigma_k$ acts on $\{1,\ldots,k\}$ and hence on the labelling of the leafs.

%By the \emph{binary tree operad} $\Tree_{\bin}$ we shall understand the suboperad of $\Tree$ where all vertices have valence 3 or 1 all vertices have valence 3 or 1..
%\end{example}

%We shall also make use of the -- vivid -- topologists convention of calling a symmetric multicategory a \emph{coloured operad}

%We shall approach the subject of higher Chas-Sullivan loop products via a construction of a multicategory resembling the cacti-operad of Voronov.

%In the definition of a multicategory, we have made no reference to any monoidal structure. If we have a monoidal product $-\boxtimes -$ on our multi-category $\mathcal{C}$, no harm is done by requiring $n$-ary morphisms $f \in \mathcal{C}(a,a_1\boxtimes \cdots \boxtimes a_n)$ -- which shall generally be the case for us. 

%For defining

\begin{definition}
Let $(\mathcal{A},\boxtimes)$ be a symmetric monoidal category. The \emph{underlying coloured operad} $\Und_\mathcal{A}$ is given by letting $\Ob(\Und_\mathcal{A}) = \Ob(\mathcal{A})$ and
$$\mathcal{\Und_{\mathcal{A}}}(a;a_1,\ldots,a_n) := \Hom_\mathcal{A}(a_1 \boxtimes \cdots \boxtimes a_n, a).$$
%Dually, we can define the \emph{co-underlying symmetric multi-category} $\coUnd_{\mathcal{A}}$ with $\Ob(\coUnd_\mathcal{A}) = \Ob(\mathcal{A})$, and setting
%$$
%\coUnd_{\mathcal{A}}(a;a_1,\ldots,a_n) := \Hom_\mathcal{A}(a,a_1\boxtimes \cdots \boxtimes a_n)
%$$
The usual (monochrome) endomorphism operad $\End_A$ of an object $A \in \mathcal{A}$ is given by considering the full subcategory of $\Und_{\mathcal{A}}$ generated by $\{A\} \subseteq \Ob(\Und_{\mathcal{A}})$.
%, where we set all the input $a,a_1,\ldots,a_n$ of $\Und_\mathcal{A}$ equal to $A$.
\end{definition}
%I.e. letting the class of morphisms be given by all morphism in $\mathcal{A}$ from 
\begin{definition}\label{colouraction}
An \emph{action} of a coloured operad $\mathcal{C}$ on $\mathcal{A}$ is a functor $\alpha \colon \mathcal{C} \to \Und_\mathcal{A}$. A \emph{monochrome action} of $\mathcal{C}$ is a functor $\alpha \colon \mathcal{C} \to \End_{A}$ for an object $A \in \mathcal{A}$.
\end{definition}

In (string) topology, operads are sought after for their actions on topological entities. As stated in the introduction, we venture on the same basic safari, but seek monochrome actions of coloured operads. As long as we seek monochrome actions, the extra colours on the operad become somewhat opaque -- and we get actions similar to that of monochrome operads. To have topological actions of course requires topology to enter the game:

Let $\mathpzc{O}$ be a coloured operad and let $\mathpzc{O}(-;k)$ be the set of all $k$-ary morphisms of $\mathpzc{O}$. 

We shall use the convention that an element of $\mathpzc{O}(-;k)$ has $1$ incoming and $k$ outgoing colours attached to it; this convention suits the cleaving process of our main coloured operad, along with its action. The '$-$' is used as a placeholder for the incoming colour.%; specifying $-$ as an element $A \in \Ob(\mathpzc{O})$ will amount to restricting to this particular incoming element.

%Let $\mathpzc{O}(A;k)$ be the restriction of $\mathpzc{O}(-;k)$ with $A \in \Ob(\mathpzc{O})$ incoming.

\begin{definition}\label{topologicaloperad}
Let $\mathpzc{O}$ be a coloured operad. We say that $\mathpzc{O}$ is a \emph{coloured topological operad} if both $\Ob(\mathpzc{O})$ and $\mathpzc{O}(-;k)$ are topological spaces, along with the data of the following commutative diagram in the category of topological spaces, involving a pullback for $m,k \in \NN$ and $i \in \{1,\ldots,k\}$:
$$
\xymatrix{
\mathpzc{O}(-;k+m-1) & \mathpzc{O}(-;k) \times_{\Ob(\mathpzc{O})} \mathpzc{O}(-;m)\ar[r]\ar[d]\ar[l]_{\circ_i} & \mathpzc{O}(-;m)\ar[d]|{\ev_{\inc}}
\\
& \mathpzc{O}(-;k)\ar[r]^{\ev_{i}}\ar[r] & \Ob(\mathpzc{O}),
}
$$
where $ev_i$ evaluates at the $i$'th outgoing colour and $\ev_{\inc}$ evaluates at the incoming colour. 

The action map $\alpha_\sigma \colon \mathpzc{O}(-;k) \to \mathpzc{O}(-;k)$ permuting the outcoming colours should also be continous, and this topological structure should naturally adhere to the associativity, unit and $\Sigma_k$-equivariance conditions as specified in \ref{colouroperaddefine}. 
\end{definition}

We shall be interested in taking homology of coloured topological operads. Any coefficient ring will do, so we shall suppress coefficients and use the notation $\h_*(-)$.

Note that homology does not in general preserve direct limits, so applying homology functors to the diagram in \ref{topologicaloperad} will not yield another pushout diagram. One can however define the homology of a coloured topological operad as the coloured operad defined partially by the induced diagram

$$
\xymatrix@C=1em{
\h_*\left(\mathpzc{O}(-;k+m-1)\right)\\
\h_*\left(\mathpzc{O}(-;k) \times_{\Ob(\mathpzc{O})} \mathpzc{O}(-;m)\right) \ar@/^/[drr]\ar@/_/[ddr]\ar@{-->}[dr]\ar[u]_-{\circ_i}\\
 & \h_*(\mathpzc{O}(-;k)) \Box_{\h_*(\Ob(\mathpzc{O}))} \h_*(\mathpzc{O}(-;m))\ar[r]\ar[d] & \h_*(\mathpzc{O}(-;m))\ar[d]|{(\ev_{\inc})_*}\\
& \h_*\left(\mathpzc{O}(-;k)\right)\ar[r]^{(\ev_{i})_*}\ar[r] & \h_*\left(\Ob(\mathpzc{O})\right),
}
$$
where $A \Box_C B$ denotes the pullback of maps $A \to C$ and $B \to C$ in graded modules over the coefficient ring.

This diagram can in turn be taken to lead to a notion of partially defined coloured operads. Partial in the sense that the dotted arrow to the pullback-space is not always invertible. For the purpose of actions of operads, such a slightly more technical notion of partially defined operads would generally suffice.

However, the operads we consider in this paper will all have contractible colours, and we can instead of introducing partial operads use the following proposition to see that in our case, applying the homology functor to our operads will result in classical operads. %However, in section \ref{defsym} we shall extend the operad to a case where the colours are no longer contractible, in which case one really needs the definition \ref{homologyoperad}.

%Our operads will can translate back to the classical setting of a monochrome operad, through the following

\begin{proposition}\label{colourtomono}
Assume that $\mathpzc{O}$ is a coloured topological operad with $\Ob(\mathpzc{O}) \simeq *$, and with evalution maps $\ev_i$ fibrations for all $i \in \{1,\ldots,k\}$, or with $\ev_{\inc}$ a fibration. Then applying homology to $\mathpzc{O}$ defines $\h_*(\mathpzc{O})$ as a classical monochrome operad in the category of graded modules over the coefficient ring. 

%and $\mathpzc{O}^{k,m} = \mathpzc{O}(-;k) \times_{\Ob(\mathpzc{O})} \mathpzc{O}(-;m)$
\end{proposition}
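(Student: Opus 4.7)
The plan is to leverage the contractibility of $\Ob(\mathpzc{O})$ to replace the topological pullback by a direct product, after which the homological cross product furnishes the tensor-product composition of a classical monochrome operad.

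The first and main step is to establish a weak equivalence $\mathpzc{O}(-;k) \times_{\Ob(\mathpzc{O})} \mathpzc{O}(-;m) \simeq \mathpzc{O}(-;k) \times \mathpzc{O}(-;m)$. This combines two standard facts: when either $\ev_i$ or $\ev_{\inc}$ is a fibration, the strict pullback agrees with the homotopy pullback (so either alternative of the hypothesis suffices); and the homotopy pullback of two maps into a contractible space is weakly equivalent to the direct product, since the forgetful map from the homotopy pullback to the product has fibre at $(x,y)$ the contractible space of paths from $\ev_i(x)$ to $\ev_{\inc}(y)$ in $\Ob(\mathpzc{O}) \simeq *$. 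Applying $\h_*$ then yields an isomorphism $\h_*(\mathpzc{O}(-;k) \times_{\Ob(\mathpzc{O})} \mathpzc{O}(-;m)) \cong \h_*(\mathpzc{O}(-;k) \times \mathpzc{O}(-;m))$.

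Next I would assemble the monochrome composition by pre-composing $(\circ_i)_*$ with the homological cross product $\h_*(\mathpzc{O}(-;k)) \otimes \h_*(\mathpzc{O}(-;m)) \to \h_*(\mathpzc{O}(-;k) \times \mathpzc{O}(-;m))$ and the isomorphism above, obtaining a map
$$\h_*(\mathpzc{O}(-;k)) \otimes \h_*(\mathpzc{O}(-;m)) \to \h_*(\mathpzc{O}(-;k+m-1))$$
of the shape required of a classical monochrome operad in graded modules. In other words, the partial structure from the diagram preceding the proposition is completed not by inverting the dotted arrow through the pullback of graded modules, but by detouring through the tensor product via the cross product. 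The axioms of Definition \ref{colouroperaddefine} hold on the topological side as commutative diagrams of spaces, so applying $\h_*$ together with the naturality of the cross product and of the pullback-to-product weak equivalence translates them into the corresponding axioms for $\h_*(\mathpzc{O})$.

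The principal obstacle is the first step: ensuring that the weak equivalence is natural enough in $k$ and $m$ to make the axiom-checking mechanical, and handling both alternatives of the fibration hypothesis uniformly. Once this is in hand, the remaining verifications are routine applications of the functoriality of $\h_*$ and naturality of the cross product.
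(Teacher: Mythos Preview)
Your proposal is correct and follows essentially the same approach as the paper: establish that the strict pullback is weakly equivalent to the product, apply homology, and use the Eilenberg--Zilber/cross-product map to obtain the monochrome $\circ_i$-composition, with the axioms inherited from the topological side. The only minor difference is in how the weak equivalence is argued: you identify the strict pullback with the homotopy pullback and analyse the contractible path-space fibre, whereas the paper invokes the long exact sequence of homotopy groups for the fibration together with the $5$-lemma; these are interchangeable justifications of the same fact.
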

\begin{proof}
Since $\Ob(\mathpzc{O}) \simeq *$, and the evaluation maps are fibrations, the long exact sequence of homotopy groups along with the $5$-lemma tells us that the pullback spaces $\mathpzc{O}(-;k) \times_{\Ob(\mathpzc{O})} \mathpzc{O}(-;m)$ and $\mathpzc{O}(-;k) \times \mathpzc{O}(-;m)$ are homotopy equivalent for all $k,m \in \NN$ so we have that 
$$\h_*(\mathpzc{O}(-;k) \times_{\Ob(\mathpzc{O})} \mathpzc{O}(-;m)) \cong \h_*(\mathpzc{O}(-;k) \times \mathpzc{O}(-;m)).$$ Applying the Eilenberg-Zilber map gives an induced map $$\circ_i \colon \h_*(\mathpzc{O}(-;k)) \otimes \h_*(\mathpzc{O}(-;m)) \to \h_*(\mathpzc{O}(-;k+m-1))$$ used to define classical operads. By definition of coloured topological operads, this map satisfies the needed associativity, unity and $\Sigma_k$-invariance conditions. 
\end{proof}

\begin{definition}
A \emph{morphism} $F \colon \mathpzc{O} \to \mathpzc{P}$ of coloured topological operads is given by continous maps 
\begin{eqnarray}\label{naturaloperad}
F_{\Ob} \colon \Ob(\mathpzc{O}) \to \Ob(\mathpzc{P}), F_k \colon \mathpzc{O}(-;k) \to \mathpzc{P}(-;k)
\end{eqnarray}
for all $k,m \in \NN$. These maps should provide a natural transformation of the diagrams specified in \ref{topologicaloperad}, defining $\mathpzc{O}$ and $\mathpzc{P}$ as coloured topological operads.

A \emph{weak equivalence} of coloured topological operads is given by a zig-zag of morphisms, where all continous maps of (\ref{naturaloperad}) are weak homotopy equivalences. 
\end{definition}

In particular, a weak equivalence $\mathpzc{P} \simeq \mathpzc{O}$ of topological coloured operads induces an isomorphism $\h_*(\mathpzc{P}) \cong \h_*(\mathpzc{O})$.

%which makes sense once we define the set $\mathpzc{O}(-;m) := \coprod_{A \in \Ob(\mathpzc{O})} \mathpzc{O}(A;m)$ to have the least topology retaining the topology on each of $\mathpzc{O}(A;m)$, and requiring $\ev_{\inc}$ to be continous.\fxnote{relate to product topology?}
%\end{definition}

%We shall seek to identify the homotopy type of the coloured operad of this paper, by identifying it to the following structure:

\begin{example}\label{colouren}
Let $\mathpzc{P}$ be a monochrome operad, and $X$ a topological space. We form the trivial $X$-coloured operad over $\mathpzc{P}$, $\mathpzc{P} \times X$, by setting
\begin{itemize}
\item $\Ob(\mathpzc{P} \times X) := X$
\item $\mathpzc{P} \times X(-;k) := \mathpzc{P}(k) \times X$
\end{itemize}
evaluation maps $\mathpzc{P}(k) \times X \to X$ are given by the projection map, $\circ_i$-composition, pointwise in $X$, the same as $\circ_i$-composition in $\mathpzc{P}$.
\end{example}

We say that $X$ is a monochrome $E_n$-operad if $|\Ob(X)| = 1$ and it is an $E_n$-operad in the usual sense; that is, it is weakly equivalent to the little $n$-disks operad.

\begin{definition}\label{enoperaddefine}
We say that a coloured operad $\mathpzc{O}$ is a \emph{coloured $E_n$-operad} if there is a weak equivalences of operads between $\mathpzc{O}$ and $\mathpzc{P} \times \Ob(\mathpzc{O})$, where $\mathpzc{P}$ is a monochrome $E_n$-operad.
\end{definition}

In \ref{cellulite}, we use methods of \cite{BergerCellular} to give a combinatorial way of detecting a coloured $E_n$-operad. We then use this to show that the operad we define in the next section is a coloured $E_{n+1}$-operad.

\section{The Cleavage Operad}\label{cleavageoperadsection}
%%The goal in this chapter is to define the cleavage operad. We shall start from the baics sofisticate the definition until \ref{groupoidcleavage} is reached.

\subsection{Definition of the Cleavage Operad}
The operadic structure we shall define will be induced from the operadic structure of trees. The trees we consider will all, without further specification, be: 
\begin{itemize}
\item Finitely generated from $2$-ary trees, in the sense that all vertices are univalent or trivalent, and there are only finitely many vertices. 
\item Rooted, in the sense that there is a distinguished univalent vertex called the root. 
\item Labelled, in the sense that for a $k$-ary tree, the $k$ remaining univalent vertices are numbered from $1,\ldots,k$.
\item Planar, specifying edges out of a trivalent vertex as left- right- or down-going.
\end{itemize}
%A \emph{labelled, rooted, binary, planar tree} $T$ is a planar tree with all vertices univalent or trivalent. We let $T$ have a distinguished univalent vertex, $r_T$ called the \emph{root}. Denote the remaining univalent vertices the \emph{leafs}, $L_T = \{l_1,\ldots,l_k\}.$ The trees are labelled, in the sense that $L_T$ has a bijection $\lambda_T \colon \{1,\ldots,k\} \to L_T$. Call the set of trivalent vertices $V_T = \{v_1,\ldots,v_{k-1}\}$. Planarity gives us left- and right-going edges.

%If $|L_T| = k$, $T$ is a \emph{$k$-ary tree}; generally we use \emph{tree} to abbreviate $T$ satisfying the above. 
$\Tree$ and $\Tree(k)$ denotes the set of isomorphism classes of trees, respectively $k$-ary trees. Grafting of trees defines $\Tree$ as a (monochrome) operad.

Let $\Gr_n(\RR^{n+1})$ be the Grassmanian of oriented subvectorspaces of codimension one inside $\RR^{n+1}$. %The direction of the normal vector supplies a canonical identification $S^n \cong \Gr_n(\RR^{n+1})$.

%Let $\Hyp^{n+1} := \Gr_n(\RR^{n+1}) \times \RR^{n+1}$, and call a tuppel $(p,H) \in \Hyp^{n+1}$ a \emph{hyperplane}.
\begin{definition}\label{hyperplanedef} 
Let the space of \emph{%based,
 affine, oriented hyperplanes} be given as
$$\Hyp^{n+1} := \Gr_n(\RR^{n+1}) \times \RR,$$
where a pair $(H,p) \in \Hyp^{n+1}$ defines an affine hyperplane $P \subset \RR^{n+1}$ by translating $H$ along $p\cdot \nu_H$, where $\nu_H$ is the unit vector specifying the orientation of $H$. The normal vector $\nu_H$ also specifies an orientation for the affine hyperplane defined by $(H,p)$.  %The subspace is based in the sense that $p \in \RR^{n+1}$ designates a centre-point of the affine hyperplane. 

%A more minimal description of the affine hyperplanes as subspaces of $\RR^{n+1}$ would be to let the spaces be given as $\Gr_n(\RR^{n+1}) \times \RR$ with the second coordinate giving the minimal distance from the hyperplane to $0 \in \RR^{n+1}$. However, the basepoints on the hyperplanes will be an ingredient in the convexity-arguments of section \ref{technicallemma}.

%We shall often let a point of $\Hyp^{n+1}$ be given by such based subsets of $\RR^{n+1}$.

%For technical reasons in upcoming section, we shall also be interested in \emph{unbased, affine, oriented hyperplanes}, given as the space

%$$
%\Hyp^{n+1}_{\dagger} := \Gr_n(\RR^{n+1}) \times \RR
%$$

%where all hyperplanes have centre-point given as the closest point to $0 \in \RR^{n+1}$.

%Letting $H$ be specifies by its normal-vector in $S^n$ gives homeomorphisms $$\Phi \colon S^n \times \RR^{n+1} \to \Hyp^{n+1} \quad\textnormal{and}\quad \Phi_{\dagger} \colon S^n \times \RR \to \Hyp_{\dagger}^{n+1}.$$
\end{definition}

\begin{definition}
Let
$$
\Tree_{\Hyp^{n+1}}(k) := \Tree(k) \times \left(\Hyp^{n+1}\right)^{k-1}.
$$
We call $(T,\underline{P}) \in \Tree_{\Hyp^{n+1}}(k)$ a \emph{$(n+1)$-decorated tree} of arity $k$. Here $\underline{P} = (P_1,\ldots,P_{k-1})$ denotes the tuple of elements of $\Hyp^{n+1}$. We shall specify the trivalent vertices of $T$ as $v_1,\ldots,v_{k-1}$, such that these are matched to the hyperplanes -- and we encourage the reader to pretend that $P_i$ is dangling from $v_i$.
%be a tree $T \in \Tree(k)$, together with $\underline{P} = \left(P_1,\ldots,P_k) \in \left(\Hyp^{n+1}\right)^k\right)$. We encourage the reader to pretend that $P_i$ is dangling from the vertex $v_i \in V_T$, and say that $v_i$ is decorated by $P_i$.%; this should for instance make how $\kappa$ respects operadic composition obvious.

Denote by $\Tree_{\Hyp^{n+1}}$ the operad constituted from the pieces above. %$\Tree_{\Hyp^{n+1}}$ has a topology as $\coprod_{T \in \Tree_{Hyp^{n+1}}} \left(\Hyp^{n+1}\right)^{|V_T|}$. 

%Often, we shall allow ourselves to not specify $\underline{P}$ of $(T,\underline{P})$ and ambigously write $T \in \Tree_{\Hyp^n}$ for an $n$-decorated tree.
%As was the case for $\Tree$, $\Tree_{\Hyp^n}$ is an operad, given by grafting of trees. The space of $k$-ary $n$-decorated trees, we denote by $\Tree_{\Hyp^n}(k)$.
\end{definition}

\begin{convention}\label{Nconvention}
Throughout this text, we denote by $N \subseteq \RR^{n+1}$ an embedded, smooth manifold. %possibly with non-empty boundary and corners. 
%We assume further that $N$ has a \emph{recording area}, $\Rec(N) \subseteq \RR^{n+1}$ where we have the requirement that $N$ is the boundary of $\Rec(N)$; $N = \partial\Rec(N)$.

%We shall allow for the recording area of $N$ to be unspecified from the notation, as it will often be the obvious choice associated with it. However, as will become clear in the definition of $\Cleav_N$, the choice of recording area is indeed a part of the data of the resulting operad, and a priori two different choices of recording area will result in two different operads.
\end{convention}

%\begin{definition}\label{assrec}
%having specified a procedure to an $N$-cleaving tree $(T,\underline{P})$ that associates at each vertex $v$ of $T$ an open submanifold $U_v$. This exact same procedure can be extended to the recording area $\Rec(N)$, so that every vertex $v$ of $T$ has a subset $\Rec(U_v)$ associated to it, where the boundary of $\Rec(U_v)$ will be the space $U_v$. We call this the \emph{associated recording area} of $U_v$. In case $\Rec(N)$ is a manifold, $\Rec(U_v)$ will in turn be a submanifold of $\Rec(N)$.
%\end{definition}

%\begin{example}
%The main example in this paper is $N := S^n$ the unit-sphere inside $\RR^{n+1}$. The associated recording area will always be $\Rec(S^n) := D^{n+1}$, the closed unit-disk inside $\RR^{n+1}$. 

%The convention \ref{Nconvention} obviously leaves some flexibility, and each choice we make on $N$ and $\Rec(N)$ will a priori provide different associated operads.
% In Chapter 3 of this thesis, we briefly touch upon another natural choice of $N$, where $N$ is an embedded knot in $\RR^3$ with recording area an immersion of an associated Seifert surface.   
%\end{example}

For $P \in \Hyp^{n+1}$, $\RR^{n+1} \setminus P$ consist of the two components $\left(\RR^{n+1}\right)^P_+$  and $\left(\RR^{n+1}\right)^P_-$, where $\left(\RR^{n+1}\right)^P_+$ is the space in the direction of the normal-vector of $P$. We say that $P$ \emph{bisects} $\RR^{n+1}$ into these two open subsets of $\RR^{n+1}$. 

%We say that $P$ \emph{bisects} $\RR^{n+1}$ into $\left(\RR^{n+1}_+\right)^P := R_+ \cup P$ and $\left(\RR^{n+1}_-\right)^P := R_- \cup P$. 

%$\left(\RR^{n+1}_{+}\right)^P$ is given by the non-negative translates of $P$ -- i.e. let $(s,v,w) = \Phi^{-1}(P)$. $x \in \left(\RR^{n+1}_+\right)^P$ if $\exists t \in [0,\infty[$ such that $x \in \Phi(s,v+t,w)$. Similarly $\left(\RR^{n+1}_-\right)^P := \{x \in \RR^{n+1} | \exists t \in [0,\infty[ x \in \Phi(s,v-t,w)\}$

%Throughout this text, denote by $N \subseteq \RR^{n+1}$ an embedded, smooth manifold; possibly with nonempty boundary and corners. %Whenever $N \neq \RR^{n+1}$, we shall also assume that $N$ is compact.
%To our given manifold, $N$ and an open submanifold $U \subseteq N$, $P \in \Hyp^{n+1}$ determines the \emph{induced bisections} on $U$, namely: $U^P_+ := U \cap \left(\RR^{n+1}_+\right)^P$ and $U^P_- := U \cap \left(\RR^{n+1}_-\right)^P$.

Let $(T,\underline{P}) \in \Tree_{Hyp^{n+1}}(k)$, and designate by $V_T$ the set of vertices of $T$ that are not the root. To our given manifold $N$ and an open submanifold $U \subseteq N$, we associate for each internal vertex $v \in V_T$ a subspace $U_v \subseteq N$:

If $v$ is the vertex attached through only a single edge to the root, we let $U_v = U$. Since $T$ is binary, for $v \in V_T$  a trivalent vertex the left-going and right-going edge connect $v$ to $v_-$ and $v_+$, respectively. Let $P_v \in \Hyp^{n+1}$ be the decoration at $v$. We let $U_{v_-} = (U_v) \cap \left(\RR^{n+1}_-\right)^{P_v}$ and $U_{v_+} = U_v \cap \left(\RR^{n+1}_+\right)^{P_v}$. This determines $U_v$ for all $v \in V_T$.

This timbering process is illustrated in figure \ref{timberpicture} for the case $N = \RR^2$, and three hyperplanes inside $\RR^2$, it gives three different examples where trees are decorated by the hyperplanes in some way.

\begin{figure}[h!tb]
\includegraphics[scale=0.6]{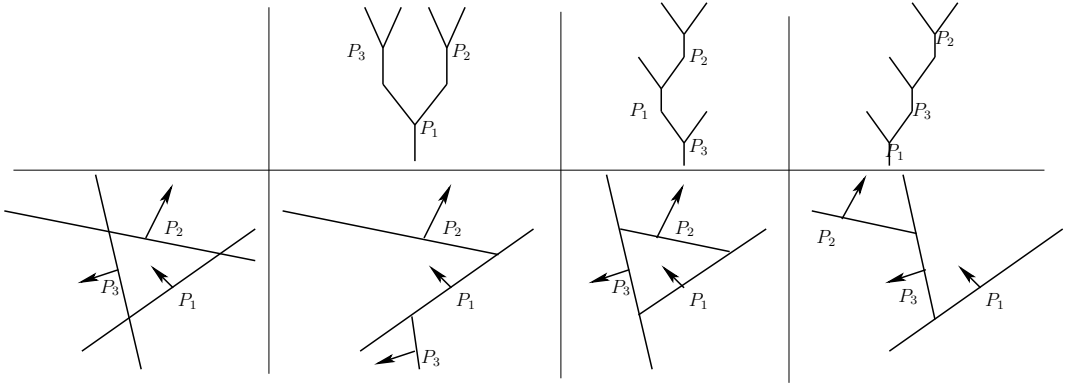}
\caption{The leftmost picture show three hyperplanes $P_1,P_2,P_3$ in $\RR^2$. In the three pictures to the right of it, it is shown how the hyperplanes enclose four subsets of $\RR^2$ that is the timber associated to decorating the hyperplanes on a tree. The tree that dictates the recursively defined cleaving process is shown directly above each picture of the timber. A right-going edge of a tree emanating from a vertex decorated by $P_i$ associates the subset of $\RR^2$ in the direction of the normal vector of $P_i$, and vice versa for a left-going edge and the subspace in the opposite direction of the normal vector of $P_i$.}
\label{timberpicture}
\end{figure}

\begin{definition}\label{cleaveconditions}
Let $U \subseteq N$ be an open submanifold. We say that a tree $(T,\underline{P}) \in \Tree_{Hyp^{n+1}}$ is \emph{$U$-cleaving} if to each trivalent $v \in V_T$ decorated by $P_v$ and associated with $U_v$ in the recursive process above have that $P_v$ intersects $U_{v}$ transversally, and that the hyperplane $P_v$ does not give rise to trivial timber, in the sense that both $U_{v_+} \neq \emptyset$ and $U_{v_-} \neq \emptyset$. 

%We say that $(T,\underline{P})$ is \emph{weakly $U$-cleaving} if only $P_i \cap U_v^\circ \neq \emptyset$ is required for all $v \in V_T$ and $v$ decorated by $P_i$. Note that weakly $U$-cleaving makes sense for any subset $U \subseteq N$. Not just submanifolds.

We let $\Timber_N$ be the set of subsets of $N$, called \emph{timber}, where $U \in \Timber_N$ if there is an %weakly
 $N$-cleaving tree, $T$, with $U$ associated to a leaf of $T$. %Let $\Timber_N^{\emptyset} := \Timber_N \cup \{\emptyset\}$.
\end{definition}

Hereby $\Timber_N$ consist of a subset of all open submanifolds of $N$, since at each vertex of a $N$-cleaving tree, the submanifolds associated to the two vertices above the vertex are again open submanifolds. Taking the closure inside $N$ of these submanifolds will yield a codimension 0 submanifold potentially with boundary and corners. 

%Summarizing the above, we have specified a procedure that to a $N$-cleaving tree $(T,\underline{P})$ associates at each vertex $v$ of $T$ an open submanifold $U_v$. The exact same procedure can be extended to the recording area $\Rec(N)$, so that every vertex $v$ of $T$ has a subset $\Rec(U_v)$ associated to it, where the boundary of $\Rec(U_v)$ will be the space $U_v$. 
%\begin{definition}\label{assrec}
%The space $\Rec(U_v)$ given above will be called the \emph{associated recording area} of $U_v$. In case $\Rec(N)$ is a manifold, $\Rec(U_v)$ will in turn be a submanifold of $\Rec(N)$.
%\end{definition}

There is a natural topology on $\Timber_N$ -- described by the space of hyperplanes giving rise to each timber, we assume this is given and wait until the next section with describing it explicitly to give the definition of the operad as fast as possible.

\begin{definition}\label{precleavage}
By the pre-$N$-cleavage operad, we shall understand the coloured operad $\widehat{\Cleav_N}$, given by
\begin{itemize}
\item $\Ob(\widehat{\Cleav_N}) = \Timber_N$
\item $\widehat{\Cleav_N}(U;k) := \left\{(T,\underline{P}) \in \Tree_{Hyp^{n+1}}(k) \mid (T,\underline{P}) \textrm{ is } U\textrm{--cleaving}\right\}$
\end{itemize}
Granted the topology on $\Timber_N$, we let 
$$\widehat{\Cleav_N}(-;k) = \coprod_{U \in \Ob(\widehat{\Cleav_N})} \widehat{\Cleav_N(U;k)}$$ and endow this with a topology as a subset of $\Tree_{\Hyp^n} \times \Timber_N$. The operadic composition

%This ensures that $\ev_{\inc} \colon \widehat{\Cleav_N} \to \Ob(\Cleav_N)$, given by the timber associated at the level $1$-vertex of the indexing tree is continuous.

$$\circ_i \colon \widehat{\Cleav_N}(U;k) \times_{\Ob(\widehat{\Cleav_N})} \widehat{\Cleav_N}(-;m) \to \widehat{\Cleav_N}(U;k+m-1)$$ is given by grafting indexing trees, and retaining all decorations of the result. 

\end{definition}

\begin{definition}\label{NCleave}
We let the $N$-cleavage operad, $\Cleav_N$ be given by letting $$\Ob(\Cleav_N) = \{U \in \Timber_N \mid \complement U \simeq \coprod_{\textrm{finite}} *\}.$$ Here $\complement U$ denotes the complement of $U$ as a subspace of $N$, so the above space is given by restricting to the cases where the complement inside $N$ of timber has finitely many contractible components.

For the $k$-ary morphisms, we take the full suboperad of $\widehat{\Cleav_N}$ on the objects $\Ob(\Cleav_N)$ specified above, and apply a quotient: $\Cleav_N (-;k) := \widehat{\Cleav_N(-;k)}/\sim$, where $\sim$ is the equivalence relation given by letting $(T,\underline{P}) \sim (T',\underline{P}')$ if for all $i \in \{1,\ldots,k\}$ the $i$'th timber $N_i$ associated to $(T,\underline{P})$ is equal to the $i$'th timber $N_i'$ of $(T',\underline{P}')$. If $(T,\underline{P})$ and $(T',\underline{P}')$ are equivalent under $\sim$, we say they are \emph{chop-equivalent}.

Since the colours are left unchanged under $\sim$, taking operadic composition induced by $\widehat{\Cleav_N}$ is well-defined.

We denote an element of $\Cleav_N(-;k)$ by $[T,\underline{P}]$, where $(T,\underline{P})$ is a representative of the element.
\end{definition}

\begin{remark}\label{recursioncomment}

A priori it would suffice to be given the set $\Timber_N$ of subsets beforehand, and from this define the operadic structure through these subsets. In the sense that a $k$-ary operation of $\Cleav_N(U;k)$ corresponds to open subsets such that taking their individual closure defines a covering of $U$. This would avoid the introduction of trees and hyperplanes. 

%Trees are in general good to describe the induced operadic structure on $\Cleav_N$; but had $\Timber_N$ been given to us beforehand one could avoid the introduction of trees and their recursive nature in the definition of $\Cleav_{N}$.

However, in our forthcoming computations we shall see that it is important that we have this very strict relationship between the trees decorated by hyperplanes and the associated timber. If one had given a more arbitrary space of subsets of $N$ instead of $\Timber_N$, the same combinatorial benefits would not be available for computations.  
\end{remark}

\begin{remark}\label{schoenflies}
Note that for $N=S^n$, the complement inside $S^n$ of $U \in \Ob(\Cleav_{S^n})$ is by the generalized Sch\"onflies theorem \cite{BrownSchoenflies} always given by a disjoint union of wedges of disks -- the wedging occurring when hyperplanes intersect directly at $S^n$. 
\end{remark}

\begin{remark}\label{chipnotchop}
That we for $\Cleav_N$ have taken a subspace of objects; i.e. $\Ob(\Cleav_N) \subset \Timber_N$ is necessary in order to obtain homological actions via umkehr maps, as we shall see in the next section. Do however note that it is only necessary for the homological actions, meaning that this contractibility assumption could be skipped if one is only interested in actions through correspondences. We shall not invent notation for it, but in this sense theorem A as a more interesting statement over a less restrictive operad.

For $N = S^1$ we have $\Ob(\Cleav_{S^1}) = \Timber_{S^1}$; the complements $\complement U$ for $U \in \Ob(\Cleav_{S^1})$ are always intervals.

Letting $N = S^n$ and $n>1$,  it follows by a simple Mayer-Vietoris argument of the $\h_0$-groups along the closure of the timber $\overline{U}$ and $\complement U$ associated to $U \in \Ob(\Cleav_{S^n})$, that we in taking the subspace $\Ob(\Cleav_{S^n}) \subset \Timber_{S^n}$ are excluding the $U$ that are disconnected\footnote{such disconnected timber do exist; attempt for instance eating an apple conventionally to disconnect the peel in the last bite}. Using the generalized Sch\"oenflies theorem as stated in \ref{schoenflies} we get that all $[T,\underline{P}]$ with in- and out-put connected are indeed in $\Cleav_{S^n}$. %This fact is applied in section \ref{technicparameter} -- which morally saying that taking the subspace of \ref{NCleave} does not alter the homotopy type when $N=S^n$.

%One can construct non-connected timber by cleaving $S^n$ sufficiently many times, however these appear somewhat exotic -- as one can experiment by eating an apple -- the moral being that taking the subspace does not alter the operad homotopically.

%the  let $\complement N_i$ consist of $k$ disks we need by the Mayer-Vietoris sequence to have that $N_i \cap \complement N_i$ is homotopy equivalent to $k+1$ dijoint copies of $S^{n-1}$. This happens precisely when $N_i$ is connected -- in other words, that $N_i$  contractible it turns out in later chapters that taking this subspace is not a drastic thing to do, homotopically nothing is rendered. 

%Note also by the generalized Schöenflies theorem \ref{BrownSchoenflies} that for $N = S^n$, the complement of timber 

We shall focus on the case of $S^n \subset \RR^{n+1}$ the unit sphere in this paper. We have however given a definition $\Cleav_N$ for generally embedded submanifolds $N \subseteq \RR^{n+1}$. The condition on the complements of timber having contractible components giving the inclusion $\Ob(\Cleav_N) \subseteq \Timber_N$ is very restrictive for manifolds different from $S^n$. The following is a list of observations regarding divergences from $N$ being the unit sphere in $\RR^{n+1}$.

\begin{itemize}
\item If we take an embedding $e \colon S^n \subseteq \RR^{n+1}$ that does not have a convex interior, it can be seen that a single cleaving hyperplane can produce an arbitrary even amount of timber. Again, the generalized Sch\"oenflies theorem tells us that these are all in $\Ob(\Cleav_{e(S^n)})$, allowing for homological actions of $M^{S^n}$ associated to these embeddings.
\item Taking a disjoint union of embedded spheres $e \colon \coprod^k S^n \to \RR^{n+1}$ where each individual embedding has convex interior, this again will lead to homological actions of $M^{\coprod^k S^n}$.
\end{itemize}

While the above two observations don't diverge far away from spheres, they can be considered as the leading examples producing an entangled version of string topology by twisting it via Khovanov homology. This will be accounted for in an upcoming paper. 

As for taking $N$ a manifold with components having different homeomorphism type than that of the sphere, we have the following comments: 

\begin{itemize}
\item As an example take $N=S^1\times S^1$. Forming the subset $\Ob(\Cleav_N) \subset \Timber_N$ makes $\Cleav_N(-;k) = \emptyset$ for $k > 1$. This can be seen by noting that if we assume that $[T,\underline{P}] \in \Cleav_N(-;2)$ then one of the outgoing timber will either be a cylinder or have a genus, hence one complement is non-contractible. For higher $k$, the complement of outgoing timber will contain such outgoing timber of $\Cleav_N(-;2)$ as a subset. And since they are subsets of $S^1 \times S^1$, there will always be at least one timber that is not contractible. 

We do not know of an example of a compact manifold without boundary embedded in euclidean space, where this phenomenon of vanishing higher arity operation does not occur; and brutally mimicking our cleaving methods does not appear to be the right way of producing homological operadic structure associated to $M^N$ for these types of manifolds $N$.%Basically since it is easy to create cylinders as timber from $S^1 \times S^1$.
\item If we take the standard cylinder $N = S^n \times [-1,1] \subset \RR^{n+2}$, one can see that the above phenomenon of vanishing higher operations does not occur. In fact, it is easy to see that the homotopy type of $\Cleav_{S^n \times [-1,1]}$ is the same as that of $\Cleav_{S^n}$. The same occurs for the M\"oebius band in $\RR^3$ compared to the unit circle. We find it an interesting endeavour to figure out what happens with $\Cleav_E$ for general vector bundles $E \to S^n$, where $E$ is embedded in sufficiently large euclidean space. 
\end{itemize}
\end{remark}

The following two definitions give relations among trees in $\Cleav_N(-;k)$. These two types of relations -- it turns out -- are essential in the forthcoming material. One should however note that these two types of relations are only examples, and do not generate all relations imposed on $\Cleav_N(-;k)$. 

To $P \in \Hyp^{n+1}$, let $-P \in \Hyp^{n+1}$ denote the hyperplane given by reversing orientation of $P$.%; that is, if $P = \Phi(s,r)$ in \ref{hyperplanedef}, we define $-P = \Phi(-s,r)$.

\begin{definition}\label{Crelex}
Assume that $(T,\underline{P})$ is an $N$-cleaving tree. Let $v$ be an internal vertex of $T$, decorated by $P$. Let $T'$ be the tree obtained from $T$ by interchanging the branches above $v$, and let $\underline{P}'$ denote the set of hyperplanes with $P$ interchanged with $-P$.  
\begin{center}
\begin{figure}[h!tb]
\includegraphics[scale=0.7]{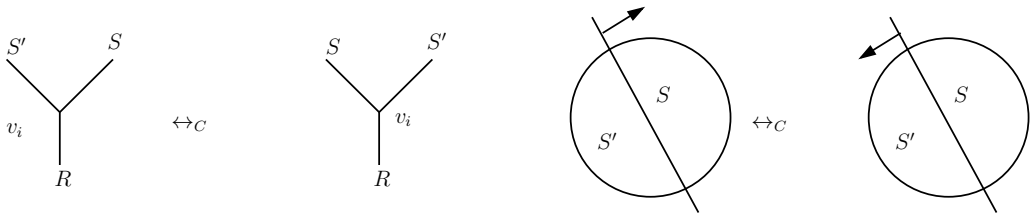}
\caption{The $\leftrightarrow_{C}$-relation}
\label{Crelatepic}
\end{figure}
\end{center}
Alternatively, the local picture \ref{Crelatepic} defines an equivalence relation $(T,\underline{P}) \leftrightarrow_C (T',\underline{P}')$. In $\Cleav_N(-;k)$, we have that $[T,\underline{P}] = [T',\underline{P}']$.
\end{definition}

We say that two hyperplanes $P,Q \in \Hyp^{n+1}$ are \emph{antipodally parallel} if $-Q$ can be obtained from $P$ by translating $P$ via its normal vector.

\begin{definition}\label{Brelex}
The local picture between $(T,\underline{P})$ and $(T',\underline{P})$ $N$-cleaving in \ref{Brelatepic}, describes when two $N$-cleaving trees are $\leftrightarrow_B$-related.
\begin{center}
\begin{figure}[h!tb]
\includegraphics[scale=0.7]{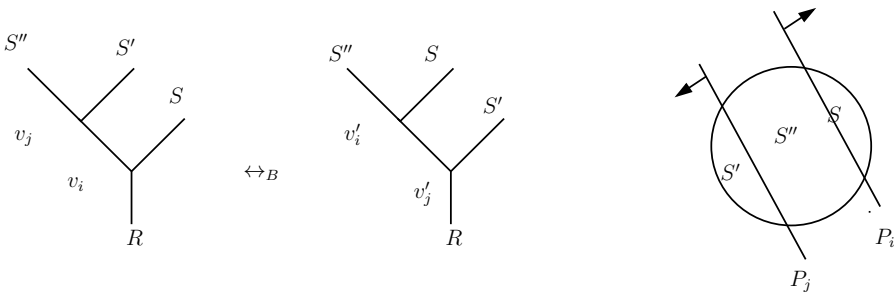}
\caption{The $\leftrightarrow_{B}$-relation}
\label{Brelatepic}
\end{figure}
\end{center}

Here the internal vertices $v_j$ directly above $v_i$ in $T$, decorated by $P_i$ and $P_j$ of $\underline{P}$ have swapped position -- along with the branches specified in the picture -- in $T'$ compared to $T$. We let $(T,\underline{P}) \leftrightarrow_B (T',\underline{P})$ if $P_i$ and $P_j$ are antipodally parallel.

In this case we have $[T,\underline{P}] = [T',\underline{P}]$ in $\Cleav_N(-;k)$.
\end{definition}

We say that $P$ and $Q$ are parallel if either $P,Q$ or $P,-Q$ are antipodally parallel.

\begin{observation}\label{paralb}
Assume that we are given $(T,\underline{P}) \in \Cleav_N(-;k)$, where all hyperplanes of $\underline{P}$ are pairwise parallel. Using the $\leftrightarrow_C$ and $\leftrightarrow_B$-relations of \ref{Crelex} and \ref{Brelex}, we obtain that $[T,\underline{P}] = [L_k,\underline{P}']$, where $L_k$ is a leftblown tree as in \ref{leftblown}, and $\underline{P}'$ is obtained from $\underline{P}$ by reversing the orientations along some hyperplanes.
\end{observation}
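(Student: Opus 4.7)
The plan is to proceed by induction on the arity $k$. The base case $k=2$ is immediate: the only planar binary tree with two leaves is $L_2$ itself, and nothing needs to be done.

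For the inductive step, the key tool is a tree-rotation move assembled from $\leftrightarrow_C$ and $\leftrightarrow_B$. Since all hyperplanes in $\underline{P}$ are pairwise parallel, any two of them are either already antipodally parallel or become so after a single application of $\leftrightarrow_C$, which reverses the orientation of one hyperplane at the cost of locally swapping the two subtrees above the corresponding vertex. Given an adjacent parent-child pair $(w,v)$ of internal vertices in $T$, we may therefore always arrange, by at most one $\leftrightarrow_C$, that $P_w$ and $P_v$ are antipodally parallel and then apply $\leftrightarrow_B$ to swap their parent-child roles, possibly followed by a compensating $\leftrightarrow_C$ to place the permuted subtrees as desired. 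The net effect is to move $v$ one level closer to the root, with leaves still attached to the same vertices and all hyperplanes still pairwise parallel (since orientation reversals preserve the parallelism class).

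Given $(T,\underline{P}) \in \Cleav_N(-;k)$ with all hyperplanes pairwise parallel, let $v$ denote the internal vertex of $T$ adjacent to the leaf labelled $k$. Iteratively apply the tree-rotation move to bubble $v$ upward until it becomes the unique internal vertex adjacent to the root. Leaf $k$ remains a child of $v$ throughout, and a final $\leftrightarrow_C$ at $v$ (if needed) places leaf $k$ as the right child of $v$. What sits above $v$ on the other side is a subtree $S$ of arity $k-1$, whose decorating hyperplanes are a subcollection of $\underline{P}$ and hence still pairwise parallel. Applying the inductive hypothesis to $S$ rewrites it as a leftblown tree $L_{k-1}$ with some orientations reversed; attaching leaf $k$ on the right above the root then yields $L_k$, and so $[T,\underline{P}] = [L_k,\underline{P}']$ in $\Cleav_N(-;k)$, with $\underline{P}'$ differing from $\underline{P}$ only by orientation reversals.

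The main obstacle is the bookkeeping in the rotation step: each $\leftrightarrow_C$ both flips an orientation and permutes subtrees, so the sequence of moves implementing a single upward rotation of $v$ must be orchestrated so that $v$ actually ends up bubbled up (rather than displaced sideways) and the other hyperplanes and leaves land in their intended positions. A clean way to handle this is a short case analysis on whether $v$ is initially a left or right child of its parent $w$ and on whether $P_v$ and $P_w$ are already antipodally parallel; each resulting case is resolved by one $\leftrightarrow_B$ flanked by at most two $\leftrightarrow_C$'s. Since the parallelism class is preserved by every move, the rotation step is always available and the induction proceeds.
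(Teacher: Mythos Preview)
The paper records this as an observation without proof, so there is no argument of the paper's to compare against. Your inductive scheme via tree rotations assembled from $\leftrightarrow_C$ and $\leftrightarrow_B$ is the right idea and does show that any $(T,\underline P)$ with pairwise parallel decorations can be brought to leftblown \emph{shape}. There is, however, a labelling slip in your final assembly step: in $L_k$ the leaf sitting on the right edge immediately above the root is labelled $1$, while the leaf labelled $k$ is at the very top on the unique left-going leaf edge. Your procedure bubbles the vertex adjacent to leaf $k$ toward the root and places leaf $k$ on the right there, with an $L_{k-1}$-shaped subtree grafted above on the left; the result is leftblown-shaped, but it is not the labelled tree $L_k$.

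This slip in fact exposes a small imprecision in the observation itself. The moves $\leftrightarrow_B$ and $\leftrightarrow_C$ preserve the timber at every labelled leaf, while in any $(L_k,\underline P')$ the leaf labelled $1$ necessarily receives an extremal slab, namely $U$ intersected with a single half-space. If in the original $(T,\underline P)$ the timber at leaf $1$ happens to be an interior slab (bounded on both sides by two of the parallel hyperplanes), then no choice of $\underline P'$ can realise the literal equality $[T,\underline P]=[L_k,\underline P']$. What your argument actually establishes, and what the paper's subsequent use of the observation in \S\ref{topologytimber} actually requires, is the weaker assertion that $[T,\underline P]$ admits a representative whose underlying tree has leftblown shape, with \emph{some} labelling of the leaves and suitably reversed orientations on the hyperplanes. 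To align your induction with this, either bubble toward the root the vertex adjacent to a leaf carrying an extremal slab, or simply phrase the inductive hypothesis in terms of leftblown shape rather than the specific labelled tree $L_k$.
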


\subsection{Topology on the Timber}\label{topologytimber}

\begin{definition}\label{leftblown}
We let the \emph{arity $k$ left-blown tree} be the tree $L_k \in \Tree(k)$, with the right-going edges all ending at leaves, let the only leaf on a left-going edge be labelled by $k$ -- and for the other leaves, if there are $i$ internal vertices between the leaf and the root, we label the leaf by $i$.
\begin{figure}[h!tb]
\begin{center}
\includegraphics[scale=0.8]{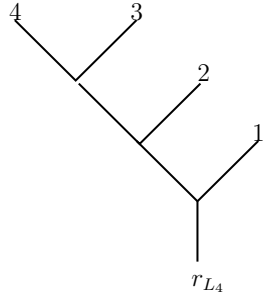}
\end{center}
\caption{The left-blown tree $L_4$}
\end{figure}

%\fxnote{leftblown picture}
Let $L_1$ denote the tree with $V_{L_1} = \emptyset$, and a single leaf and root.
\end{definition}

Cultivating the cleaving tree appropriately, that is by reversing orientations of hyperplanes -- swapping branches around as in \ref{paralb} -- and cutting away unnecessary branches, we can assume $U \in \Timber_N$ to be on the leaf labelled $k+1$ of $L_k$ for some $k \in \NN$.    

%\begin{definition}
%We let $\overline{\Timber_N}$ be given as a subset of $\coprod_{i=0}^\infty \Timber_N \times \left( S^n\right)^i$. To an $N$-cleaving left-blown tree $(L_k,\underline{P})$ let $s(P_i)$ denote the direction of the normal-vector of $P_i$. For each $(L_k,\underline{P})$, we obtain \emph{$L_k$-tuppels} $(U,\underline{s(P)} \in \Timber_N \times \left(S^n\right)^k$, where $U$ is the timber at the leaf labelled by $k+1$. Let the $k$'th component of $\overline{\Timber_N}$ be given by $\overline{\Timber_N}_k := \{(U,\underline{s(P)})\in \Timber_N \times \left(S^n\right)^k \mid (U,\underline{s(P)}) \textrm{ is a } L_k-\textrm{tuppel}\}$.

%The component at $i=0$ is $\Timber_N$.
%\end{definition}
%Note that we have a map $\Timber_{\RR^n} \to \Timber_N$ induced by the inclusion $N \hookrightarrow \RR^n$, mapping the decoration at $\kappa_{L_k}(k)$ to $\emptyset$ if $(L_k,P_1,\ldots,P_k$ is not $N$-cleaving, and otherwise mapping it to $U|_N$, where $U$ is the timber decorated at $\kappa_{L_k}(k)$.

\begin{construction}
We specify a topology on $\Timber_{\RR^{n+1}}$ as a subspace by seeing that there is an injection $\psi \colon \Timber_{\RR^{n+1}} \to \coprod_{i \in \NN} \left(\left(\Hyp^{n+1}\right)^i/\Sigma_i\right)$, where the permutation group $\Sigma_i$ permutes the factors of the product.%, and the quotient by $\Delta$ means that we are identifying factors $P_y$ and $P_x$ in the product if $P_x = P_y$.

To specify the injection $\psi$, note that for $U \in \Timber_{\RR^{n+1}}$, the boundary of the closure in $N$ of $U$, $\partial \overline{U}$ contain the information needed to reconstruct $(L_k,\underline{P})$ having $U$ as the decoration on the top-leaf. Such hyperplanes are given by taking least affine subsets containing certain parts of $\partial \overline{U}$; either distinguished by different components of $\partial \overline{U}$ -- and otherwise a corner of $\partial \overline{U}$ will be the distinguishing feature for $(L_k,\underline{P})$. The function $\psi$ now maps $U\in \Timber_{\RR^{n+1}}$ to the corresponding hyperplanes, $(P_1,\ldots,P_k) \in \left(\Hyp^{n+1}\right)^k$ decorating $L_k$ and determined by $\partial \overline{U}$. This thus hits the component in the image of $\psi$ indexed by $k$.%, we let the remaining infintely many factors in the image of $\psi$ be given by one of the hyperplanes $P_i$.

There is ambiguity in the above definition of $\psi$; any reordering of the hyperplanes $(P_1,\ldots,P_k)$ will give rise to the same top-level timber. We therefore quotient by $\Sigma_i$ in the image of $\psi$. 
\end{construction}

Let $\Timber_N^\emptyset := \Timber_N \cup \{\emptyset\}$.

\begin{construction}
For $N \subset \RR^{n+1}$, we have a surjection $\mu_N \colon \Timber_{\RR^{n+1}} \to \Timber_N^{\emptyset}$, given by $\mu_N(U) = \overline{N \cap U}^\circ$, whenever $\overline{N \cap U}^{\circ} \in \Timber_N$, where the $\overline{\phantom{w}}$ followed by $^\circ$ denotes the opening of the closure inside $N$; and if $\overline{N \cap U}^\circ \notin \Timber_N$, we let $\mu_N(U) = \emptyset$. We specify a topology on $\Timber_N^{\emptyset}$ by letting $\mu_N$ be a quotient map.

Note that $\mu_N$ is well-defined since hyperplanes resulting in transversal intersections of $N$ gives rise to $\mu_N$ mapping to the empty set. We need to take the closure of the opening of $N \cap U$ to ensure that hyperplanes cleaving $N$ tangentially has the same effect as cleaving $\RR^{n+1}$ away from $N$. 

We let $\Timber_N \subset \Timber_N^{\emptyset}$ be endowed with the subspace topology
%The topology on $\overline{\Timber_N}$ is given as subspaces of a cross-product of the above, with products of spheres.
\end{construction}

\begin{remark}\label{killleaf}
Specifying $\mu_N$ as a quotient map means that certain elements $[L_k,\underline{P}] \in \Cleav_{\RR^{n+1}}$ will give rise to the same $\overline{U \cap N}^{\circ}$ at the top-leaf, under $\mu_N$. In particular, if $[L_k,\underline{P}]$ giving rise to $\overline{U \cap N}^\circ$ has some leaf decorated by $\emptyset$, we can instead consider $[L_{k-1},\hat{\underline{P}}]$ as giving rise to $\overline{U \cap N}^{\circ}$, where $\hat{\underline{P}}$ is given by removing the hyperplane from $\underline{P}$ decorating the vertex below said leaf, since the hyperplane in question will not be cleaving $N$.
\end{remark}

\begin{proposition}\label{contractibletimber}
Let $N$ be a compact submanifold of $\RR^{n+1}$. $\Timber_{N}$ is contractible. Similarly, $\Ob(\Cleav_{S^n})$ is contractible for $S^n \subseteq \RR^{n+1}$ the unit sphere
\end{proposition}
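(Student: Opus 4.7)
The plan is to build an explicit deformation retraction of $\Timber_N$ onto the singleton $\{N\} \subseteq \Timber_N$, where $N$ is realised as the top-leaf timber of the trivial left-blown tree $L_1$ (which has no internal vertices and hence no decorating hyperplanes). The idea is to translate every decorating hyperplane of a left-blown representative of $U \in \Timber_N$ simultaneously along its positive normal direction, until every hyperplane has left a ball containing $N$, at which point each gives an empty cut and can be discarded by Remark \ref{killleaf}.

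To make this precise, the discussion preceding the proposition, together with the relations $\leftrightarrow_B$ and $\leftrightarrow_C$ of Definitions \ref{Brelex} and \ref{Crelex}, shows every $U \in \Timber_N$ admits a left-blown representative $(L_k,(P_1,\ldots,P_{k-1}))$ with $U$ on leaf $k$. Since $N$ is compact I would choose $R > 0$ with $N \subseteq B(0, R)$, and for $P = (H, p) \in \Hyp^{n+1}$ write $\tau_s P := (H, p + s)$ for translation of $P$ by $s$ along the unit normal $\nu_H$. I would then define
$$
\Phi_t(U) \ := \ \mu_N\bigl(L_k,\, (\tau_{3Rt}P_1,\, \ldots,\, \tau_{3Rt}P_{k-1})\bigr), \qquad t \in [0,1],
$$
for a representative as above. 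At $t = 0$ this is the identity. At $t = 1$, transversality of $P_i$ with $N$ forces $|p_i| \leq R$, so $p_i + 3R \geq 2R$ and each translated hyperplane misses $B(0, R) \supseteq N$ with $N$ lying entirely on its left half-space; every right-leaf of $L_k$ then carries empty timber, and Remark \ref{killleaf} collapses the tree to $L_1$, giving $\Phi_1 \equiv N$. Continuity of $\Phi_t$ would follow from continuity of hyperplane translation at the level of $\coprod_k (\Hyp^{n+1})^k / \Sigma_k$, combined with $\mu_N$ being a quotient map.

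The main obstacle is well-definedness: two left-blown representatives of the same $U$ must yield the same image under $\Phi_t$. For this I would argue that any hyperplane $P = (H, p)$ appearing in a representative but contributing nothing to the top-leaf cut on $N$ must already have $N$ on its left half-space (so $p > R$), and that such \emph{inert} hyperplanes remain inert under further positive translation, hence cannot create a discrepancy between the two images. For $N = S^n$, convexity additionally ensures that the \emph{active} hyperplanes (those genuinely cutting $N$) are uniquely recoverable from the spherical caps of the top-leaf timber, which streamlines the comparison considerably.

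For the last statement, the same $\Phi_t$ restricts to $\Ob(\Cleav_{S^n})$. Each left half-space grows monotonically in $t$, so $\Phi_t(U)$ is a monotonically growing family of subsets of $S^n$; starting from a connected $U \in \Ob(\Cleav_{S^n})$ it stays connected and equals $S^n$ at $t = 1$. By Remark \ref{schoenflies}, the complement at each intermediate time is a disjoint union of wedges of disks whose components can only shrink, split apart as wedging dissolves, or vanish entirely, hence remain contractible and finite in number, so $\Phi_t(U) \in \Ob(\Cleav_{S^n})$ throughout. A residual technicality is continuity at times where a translated hyperplane becomes tangent to $S^n$, but this is absorbed by the quotient structure of $\mu_N$, under which tangent hyperplanes are identified with trivially-intersecting ones.
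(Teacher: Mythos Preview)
Your approach is essentially the paper's: translate each bounding hyperplane of $U$ along its outward normal until it no longer meets $N$, invoke Remark~\ref{killleaf} to drop it, and use compactness of $N$ to guarantee termination at the trivial tree $L_1$. The paper is less explicit than you are---it neither fixes a uniform translation distance nor discusses representative-independence or the tangency issue---but the geometric mechanism is identical, as is the argument for $\Ob(\Cleav_{S^n})$ via shrinking disk-complements.

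One small correction: your claim that an inert hyperplane (present in a left-blown representative but not contributing to $\partial\overline{U}$ in $N$) must satisfy $p>R$ is not true in general. A hyperplane can meet $N$ nontrivially yet be redundant for the top-leaf timber because another hyperplane already removes the region it would cut. This does not damage your argument, though, because you do not actually need to treat arbitrary representatives: the injection $\psi$ and the quotient $\mu_N$ equip every $U\in\Timber_N$ with a \emph{canonical} unordered tuple of hyperplanes, namely those spanning the pieces of $\partial\overline{U}$ inside $N$. Applying the translation to this canonical tuple---which is what the paper implicitly does when it speaks of ``the hyperplanes $P_1,\ldots,P_k$'' cleaving $U$---removes the well-definedness obstacle you flagged and makes the continuity check cleaner.
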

\begin{proof}

Given a point $U \in \Timber_N$, this will have $\complement U$ consist of a disjoint union of submanifolds of $N$ that has boundary at the points where $U$ has been cleaved from $N$ by hyperplanes $P_1,\ldots,P_k$. Each $P_i$ has a normal-vector in the direction towards $U$, and one in the direction away from $U$. The topology on $\Timber_N$, precisely determined by these hyperplanes makes it continuous in $\Timber_N$ to translate $P_1,\ldots,P_k$ in the direction away from $U$. Since $N$ is compact, this translation will in finite time take each hyperplane past tangential hyperplanes of $N$. Hence each hyperplane eventually disappears from the cleaving data and by \ref{killleaf}, eventually this translation provides an element of $\Cleav_{N}$ given by the $1$-ary undecorated tree $L_1$ as an operation from $N$ to $N$ . This hence defines a homotopy $\Phi_t \colon \Timber_N \to \Timber_N$ with $\Phi_0(U) = U$ and $\Phi_1(U) = N$, and hence the desired null-homotopy onto $N \in \Timber_N$.

For the statement on $\Ob(\Cleav_{S^n})$, note that from the definition \ref{NCleave} and \ref{schoenflies}, that the submanifolds of $\complement U$ will consist of a disjoint union of disks. The null-homotopy $\Phi_t$ above will in this case result in smaller and smaller disks as $t$ increases, and so $\Phi_t(U)$ remains within $\Ob(\Cleav_{S^n})$, and the null-homotopy is given as above.

%Considering $L_1$ $N$-cleaving, the decoration at the single leaf is $N$, as no cleavage is involved. We shall inductively specify a contraction from the decoration $U$ of the leaf labelled $k$ for a $N$-cleaving $(L_k,\underline{P})$ onto $N$, the sole decoration of $L_1$.
 
%Let $U_0 \in \Timber_{N}$ be the decoration of the $k$'th leaf. Translate the decoration $P_{k-1}$ in the opposite direction its the normal vector; using the $\RR^{n+1}$-coordinate \ref{hyperplanedef}. Such translation gives rise to $U_t \in \Timber_N$. Since $N$ is compact, $P_{k-1}$ is eventually tangent to $N$, where we let the translation terminate with timber $U_1$. At this point, $(L_k,\underline{P})$ is no longer cleaving, however $U_1$ is the decoration at the $(k-1)$'st leaf of $(L_{k-1},\underline{P}\setminus \{P_{k-1}\})$, which by induction deformation retracts onto $L_1$. 

%For the statement on the sphere, note that from the definition \ref{NCleave} and \ref{schoenflies}, the submanifolds in $\Ob(\Cleav_{S^n})$ will always be single sub-disks of $S^n$ -- and the above contraction will provide larger and larger subdisks of $S^n$. So that the contraction never leaves $\Ob(\Cleav_{S^n}) \subseteq \Timber_{S^n}$. 
\end{proof}

The following proposition tells us in conjunction with \ref{contractibletimber} that \ref{colourtomono} applies to $\Cleav_N$.

\begin{proposition}\label{evincfibration}
The evaluation map $\ev_{\inc} \colon \Cleav_{N}(-;k) \to \Ob(\Cleav_{N})$ is a fibration
\end{proposition}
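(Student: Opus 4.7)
The plan is to verify the homotopy lifting property for $\ev_{\inc}$ by exploiting the openness of the cleaving conditions from \ref{cleaveconditions}.

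The key structural input comes from working upstairs in the pre-cleavage operad $\widehat{\Cleav_N}(-;k)$: for each tree $T \in \Tree(k)$, the subset
$$
\hat{C}_T := \{(U,\underline{P}) \in \Ob(\Cleav_N) \times (\Hyp^{n+1})^{k-1} : (T,\underline{P})\ \text{is}\ U\text{-cleaving}\}
$$
is open in the product, because both transversality of each $P_v$ with $U_v$ and non-vacuousness of the pieces $U_{v_\pm}$ are open conditions on the pair $(U,\underline{P})$. The restriction of $\ev_{\inc}$ to $\hat{C}_T$ is then simply the projection to the first factor.

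Given a homotopy $H\colon Y \times I \to \Ob(\Cleav_N)$ with a starting lift $\tilde{H}_0$, I would construct a lift by local patching. For each $y \in Y$, pick a representative $(T(y),\underline{P}_0(y))$ of $\tilde{H}_0(y)$. Openness of $\hat{C}_{T(y)}$ combined with continuity of $H$ gives a neighbourhood $V_y$ of $y$ and $\epsilon_y > 0$ on which the constant choice $\underline{P}_0(y)$ remains $H(y',t)$-cleaving for $(y',t) \in V_y \times [0,\epsilon_y)$. By compactness of $I$ one refines to a finite cover of $Y \times I$ supporting constant local lifts on each patch, and between overlapping patches interpolates $\underline{P}$ continuously within the open cleaving locus, producing a lift into the pre-cleavage operad. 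This descends to a lift into $\Cleav_N(-;k)$ because the timber $N_i$ associated to a cleaving datum depends continuously on the hyperplanes, so chop-equivalence classes vary continuously with $(T,\underline{P})$ as well.

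The main obstacle is the interpolation step when the local representative tree $T(y)$ must differ across overlapping patches. Continuous transitions between chop-equivalent representatives are then required inside the open cleaving locus; the relations $\leftrightarrow_B$ and $\leftrightarrow_C$ of \ref{Brelex} and \ref{Crelex}, together with chop-equivalence more generally, supply such continuous paths. Verifying that the resulting partition-of-unity gluing is single-valued after passage to the quotient is where the technical work concentrates.
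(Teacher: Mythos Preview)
Your approach is not the paper's. Instead of local patches and a partition of unity, the paper builds an explicit ``re-centering'' transformation $\Gamma_\varepsilon$: for each decorating hyperplane $P_i$ it records the maximal interval $J_i=(j_-,j_+)$ of translations along the normal $\nu_i$ for which the decorated tree remains cleaving, and whenever $P_i$ lies within $\varepsilon$ of an endpoint of $J_i$ it translates $P_i$ back toward the midpoint. After reducing to compact $Y$, one picks a single $\varepsilon>0$ below all the initial margins $j^i_{\min}$ over $y\in Y$; the lift $\tilde h(y,t)$ is then $\Gamma_\varepsilon(\phi(y))$ regarded as a cleaving of the moving timber $h(y,t)$. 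The point is that no gluing is needed: a single global formula produces the lift, and the correction $\Gamma_\varepsilon$ only translates each hyperplane along its own normal inside $J_i$, so it stays in the cleaving locus by construction.

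Your proposal has two gaps beyond the tree-switching issue you flag. First, on the patch $V_y\times[0,\varepsilon_y)$ you use the \emph{single} tuple $(T(y),\underline P_0(y))$; at $t=0$ this agrees with the prescribed $\tilde H_0$ only at the point $y$, not on all of $V_y$, so your local ``lifts'' do not extend the given initial data. Repairing this forces a continuous-in-$y'$ choice of representative for $\tilde H_0(y')$, i.e.\ a local section of the chop-equivalence quotient $\widehat{\Cleav_N}(-;k)\to\Cleav_N(-;k)$, which you have not supplied and which is not automatic. Second, the open set $\hat C_T\subset \Ob(\Cleav_N)\times(\Hyp^{n+1})^{k-1}$ is not convex in the hyperplane coordinates: a partition-of-unity interpolation between two cleaving tuples can easily pass through a tangential or non-separating configuration. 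The phrase ``interpolates $\underline P$ continuously within the open cleaving locus'' therefore hides precisely the difficulty that the paper's $\Gamma_\varepsilon$ is designed to resolve.
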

\begin{proof}
To $[T,\underline{P}] \in \Cleav_{N}(-;k)$ we shall first of all for each of the hyperplanes $P_i$ of $\underline{P}$ prescribe the following transformation:

Under the relation \ref{Crelex}, we have to make a choice of normal-vector $\nu_i$ of $P_i$, this defines an interval $J_i = ]j_-,j_+[$ given by the maximal interval such that translating $P_i$ along $\nu_i$ with $r \in J_i$ as a scalar the hyperplane still participates in a cleaving configuration as a decoration of $T$. Note that since $[T,\underline{P}]$ is cleaving we have $0 \in J_i$ and denote by $c(J_i)$ the center-point of the interval. Note that the other choice of normalvector $-\nu_i$ will give rise to the interval $-J_i$ which will leave the following invariant:  

Fix $\varepsilon > 0$, if $j^i_{\min} := \min\{|j_-|,j_+\} < \varepsilon$, translate $P_i$ by $\sgn(c(J_i))\cdot\min\{\varepsilon - j^i_{\min},c(J_i)\}$ where $\sgn(c(J_i))$ is the sign of $c(J_i)$.

This translation can naturally be done to all the decorations of a decorated tree $[T,\underline{P}] \in \Cleav_{N}(-;k)$ simultaneously. Call this transformation $\Gamma_{\varepsilon}(T,\underline{P})$, note that since we are moving all hyperplanes at once, dependent on how large $\varepsilon$ is chosen, $\Gamma_{\varepsilon}([T,\underline{P}])$ does not a priori result in a cleaving tree.

We seek to find a lift in the diagram
$$
\xymatrix{
Y\ar[r]^(0.4){\phi}\ar[d] & \Cleav_{N}(-;k)\ar[d]|{\ev_{\inc}}\\
Y \times I \ar[r]^{h}\ar@{-->}[ur]^{\tilde{h}} & \Ob(\Cleav_{N})
}
$$
where we can assume that $Y$ is compact, and therefore pick $$0< \varepsilon < \inf_{y \in Y}(\min\{j^i_{\min} \mid P_i \textrm{ decorates } \phi(y)\})$$ where $j^i_{\min}$ is the minimal value where $P_i$ can be translated in order to have it still participate in a cleavage as defined above.

The lift $\tilde{h}(y,t)$ is now given as $\Gamma_{\varepsilon}(\phi(y))$ considered as a cleaving tree of the timber $h(y,t)$. Note that our choice of $\varepsilon$ makes $\Gamma_{\varepsilon}(\phi(y))$ result in an element of $\Cleav_N(-;k)$, basically since along $t$, the timber $h(y,t)$ will change continuously and therefore by definition of $\Gamma_{\varepsilon}$ will for a small neighborhood of $t \in I$ only give rise to a small change in how the configurations of hyperplanes change, guaranteeing their continual cleaving attributes.
\end{proof}

%\section{Coconut Cutters operad}\label{cococu}
%\input{cococu.tex}
%\section{Higher Dimensional Cacti}\label{cactisphere}
%\input{cactisphere.tex}
%\pagebreak
%\section{Batalin-Vilkovisky Algbebra}\input{bvalgebra.tex}
%\pagebreak
%\section{From Framed Little Disks to Cacti}\label{cactimap}
%\input{cactimap.tex}
\section{Action of Cleavages}\label{cactiaction}
Let $M$ be a compact oriented manifold. We set $M^N := \{f \colon N \to M\}$ -- i.e. the space of unbased, continous maps from $N$ to $M$, endowed with the compact-open topology. %In this chapter, we shall in the first describe an action via correspondance of $\Cleav_N$ on $M^N$, and in the second how this forms the basis for a more topological action, via fibered spectra.

%It will help the formulation to assume that $N$ is the boundary of an embedded $\emptyset$-bordant $W$. I.e.: $N = \partial W$, where $W \subseteq \RR^{n+1}$. We shall make this assumption throughout. 

%\subsection{Correspondance Actions}\label{coraction}

From this section on, we shall only consider our main example of $\Cleav_N$, namely the case where $N := S^n$ denotes the unit sphere bounding the unit disk $D^{n+1} \subset \RR^{n+1}$. 

With slight modifications, one can also obtain correspondence actions of general embedded manifolds $N \subseteq \RR^{n+1}$, we refer to our thesis, \cite[section 1.4]{Bargheerthesis} for the details.

Above \ref{cleaveconditions}, we have specified a procedure that to an $N$-cleaving tree $(T,\underline{P})$ associates at each vertex $v$ of $T$ an open submanifold $U_v \subseteq S^n$. This procedure extends to the manifold $D^{n+1}$ as well, so that every vertex $v$ of $T$ has a subset $D^{n+1}_v$ associated to it, denoted $\Rec(U_{v})$  where the boundary of $D^{n+1}_v$ will be the space $U_v \subseteq S^n$. %In case $\Rec(N)$ is a manifold, $\Rec(U)_v$ will in turn be a submanifold of $\Rec(N)$.

\begin{definition}\label{blueprintdefine}
Let $U \in \Timber_{S^n}$. To a $U$-cleaving tree $(T,\underline{P})$, we associate the \emph{blueprint} of $(T,\underline{P})$ as the following subset of $D^{n+1}$:
$$
\beta_{(T,\underline{P})} := D^{n+1} \setminus \bigcup_{i=1}^{k} D^{n+1}_i
$$

where $D^{n+1}_i$ is the subset of $D^{n+1}$ associated to the $i$'th leaf of $(T,\underline{P})$ seen as cleaving $D^{n+1}$. By definition of the recursive procedure above \ref{cleaveconditions}, $\beta_{(T,\underline{P})}$ will be contained in the collective union of all the hyperplanes of $\underline{P}$, loosely it will consist of all points of hyperplanes in $\underline{P}$ that have been involved in the recursive bisection process of $D^{n+1}$.
\end{definition}
In figure \ref{cleavagepic} of the introduction, the boundary of $\beta_{(T,\underline{P})}$ will be the collective boundary of the closure within $S^n$ of the submanifolds in picture D. 

The notion of a blueprint is invariant under the chop-equivalence of \ref{NCleave}, so we can make sense of the blueprint for $[T,\underline{P}] \in \Cleav_{S^n}(-;k)$ and shall denote this by $\beta_{[T,\underline{P}]}$. 

%\begin{definition}\label{nonconvexblueprint}
%We let $\widehat{\pi_0(\beta_{[T,\underline{P}]})}$ denote the quotient of $\pi_0(\beta_{[T,\underline{P}]})$, where two pathcomponents are considered equivalent if the same hyperplane in $\underline{P}$ has given rise to these components of $\beta_{[T,\underline{P}]}$.
%\end{definition}

%\begin{example}
%If $N = S^n \subseteq \RR^{n+1}$ as the unit-sphere, bounding the unit disk $D^{n+1}$, convexity of $D^{n+1}$ entails that $\pi_0(\beta_{[T,\underline{P}]}) = \widehat{\pi_0(\beta_{[T,\underline{P}]})}$.

%As an example of where the quotient matters, take $N$ a standard-embedding of $S^1\times S^1$ in $\RR^3$, with recording area $D^1 \times S^1$, then cleaving $S^1 \times S^1$ with a single hyperplane into two annuli would have $\beta_{[T,\underline{P}]}$ consist of two components, whereas $\widehat{\pi_0(\beta_{[T,\underline{P}]})}$ would still be trivial.
%\end{example}

%By the correspondance category $\Corr(\mathcal{C})$, of a co-complete category $\mathcal{C}$ we shall mean the category retaining the objects of $\mathcal{C}$, but morphisms from $X$ to $Y$ given by correspondances $\xymatrix{X & Z\ar[r]\ar[l] & Y}$, with the arrows of the correspondance being morphisms in $\mathcal{C}$.

Let $\Corr(\mathcal{C})$ denote the correspondence category over $\mathcal{C}$ a co-complete category, as described in the introduction.

\begin{construction}\label{cleavageaction}
We construct a functor $\Phi_{S^n} \colon \Cleav_{S^n} \to \End^{\Corr(\Top)}_{M^{S^n}}$. That is, an action of $\Cleav_{S^n}$ on $M^{S^n}$ as an object of the category of natural transformations of correspondences over $\Top$.

To $[T,\underline{P}] \in \Cleav_{S^n}(U;k)$ let $\complement {S^n}_1,\ldots,\complement {S^n}_k$ denote the complement, inside of $S^n$, of the timbers associated to the leafs of $T$. %Here $N_i$ is the timber associated to $\kappa_T(i)$. 

%To the representative, $T$ of the class $[T]$, there is a permutation $\sigma_T \in \Sigma_k$, given by the permutation that maps $i$ to the $i$'th leaf from the left to the right, of the planar tree $T$ -- which is well-defined once we make the edges straight, and make sure that leafs on the branches of $T$ are hit by  consequtive subsets of $\{1,\ldots,k\}$. 
%I.e. $\Gamma_N$ is a functor into the endomorphism operad of $\Map(N,M)$ considered as an object in the correspondance category of topological spaces. This is done by taking $\Cleav_N \ni \chi_T \colon N_t \to \prod_{s \in \String(T)} (N_t)_s$, and considering the pullback-diagram

The action is given through the following pullback-diagram

\begin{eqnarray}\label{basaction}
\xymatrix{
M_{[T,\underline{P}]}^{S^n} \ar[r]^{\phi^*}\ar[d] & \left(M^{S^n}\right)^k\ar[d]|{\res} \\
 M^{\pi_0(\beta_{[T,\underline{P}]})} \ar[r]^{\phi} & \prod_{i=1}^k M^{\left(\complement {S^n}_{i}\right)}
}
\end{eqnarray}

%For a free components $C$ of $\String_0$, we let $\complement(N_t)_C = C$.

where $\res$ is the restriction map onto each complement.

%and the additional free components $C_1,\ldots,C_r$: 
%$$\res(f_1,\ldots,f_k) = \left(f_1|_{\complement(N_t)_{s_1}},\ldots,f_k|_{\complement(N_t)_{s_k}}\right).$$

%Again, the choice of $f_1$ on the last $r$ 'dummy' factors, will not have an effect on the actual 

%-- where we have ordered the elements of $\String(T)$ as $s_1,\ldots,s_k$ -- asthe labelling of the leaves of $T$. Note that this indeed describes an arbitrary tupple in the image of $\res$; since we for each $s_i \in \String(T)$ restrict $f_i$ onto each component of $\complement (N_t)_{s_i})$.

Where as usual $\pi_0(X)$ denotes the set of path-components of the space $X$. 

We define $\phi$ as the induced of a map $c \colon \coprod_{i=1}^k \left(\complement {S^n}_{i}\right) \to \pi_0(\beta_{[T,\underline{P}]})$. To define $c$, note that a component $C$ of $\complement {S^n}_i$, will have $\partial C \setminus \partial \overline{U}$ be the result of some cuts of hyperplanes decorating $T$. By definition of $\pi_0(\beta_{[T,\underline{P}]})$, and since $C$ is a connected component, the cuts will all constitute the same element, $c(C)$ -- of $\pi_0(\beta_{[T,\underline{P}]})$ making $\phi$ well-defined as the map constant map along the timber intersecting elements of $\pi_0(\beta_{[T,\underline{P}]})$ nontrivially.

%    to the element of $\widehat{\pi_0(\beta_T)}$ given by the component, $B$, of $\beta_T$ that satisfy $C \cap B \neq \emptyset$, and $B \cap N_i \neq \emptyset$. Some $P_i$ by definition of $\beta_T$ bounds at least one $C$, and requiring that it also bounds the specific $N_i$, makes it unique since the components of $\complement N_i$ are disjoint.

%By definition of $c'(\Gamma_T)$, to each $D_i \in c(\coprod_{i=\sigma_T(1)}^{\sigma_T(k)}$, there is precisely on $C_i \in c'(\Gamma_T)$ with $C_i \cap \partial D_i \neq \emptyset$. This assignment thus describes a function $f \in c'(\Gamma_T) \to c(\coprod_{i=\sigma_T(1)}^{\sigma_T(k)} \complement N_i)$. $\prod_{i= \sigma_T(1)}^{\sigma_T(k)} M^{\complement N_i} \cong M^{\coprod_{i= \sigma_T(1)}^{\sigma_T(k)} \complement N_i}$. Under this identification, we let $\phi$ be given by 

%$$
%\phi(m_{C_1},\ldots,m_{C_h}) = \{\left(d \mapsto m_{f(D)}\right) \in M^D\}_{D \in c\left(\coprod_{i=\sigma_T(1)}^{\sigma_T(k)} \complement N_i\right)} 
%$$

By glueing the functions in the pullback space, we can identify $M_{[T,\underline{P}]}^{S^n}$ as the space of $f \in M^{S^n}$ such that $f$ is constant along each subspace of the blueprint that is a representative of $\pi_0(\beta_{[T,\underline{P}]})$. We hence have a canonical inclusion $\iota_{[T,\underline{P}]} \colon M^{S^n}_{[T,\underline{P}]} \to M^{S^n}$, and in turn a correspondence 
$$
\xymatrix{
M^{S^n} & M^{{S^n}}_{[T,\underline{P}]}\ar[l]_{\iota_{[T,\underline{P}]}}\ar[r]^{\phi^*} & \left(M^{S^n}\right)^k
}
$$
\end{construction}

\begin{example}
\begin{figure}[h!tb]
\includegraphics[scale=0.5]{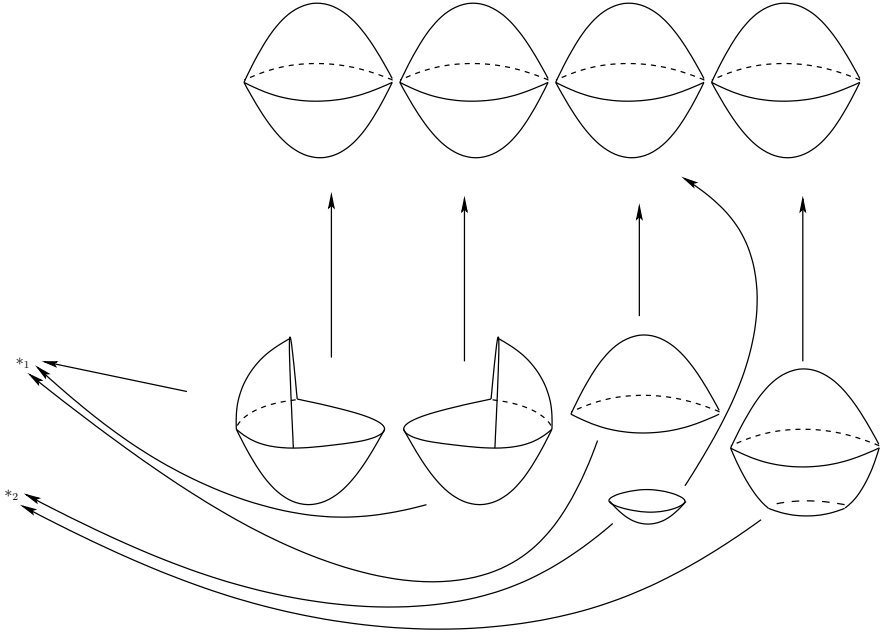}
\caption{The $4$-ary operation of the introduction has the complement of its timber $\complement {S^2}_1,\complement {S^2}_2,\complement {S^2}_3,\complement {S^2}_4$ drawn as the five disks on the bottom right corner. The upwards arrow are the inclusion maps, so that dualizing them provides the restriction map. The leftwards maps have as target two points, and these should be considered as collapsing the components of the blueprint $\beta_{[T,P_1,P_2,P_3]}$. These maps are given as the ones where the boundary of each disk is contained in a component of the blueprint.}
\label{actioncleave}
\end{figure}

For the cleavage $[T,P_1,P_2,P_3] \in \Cleav_{S^2}(S^2;4)$ in picture D of figure \ref{cleavagepic} in the introduction, we can consider the morphisms defining diagram (\ref{basaction}) through figure \ref{actioncleave}, using the mapping functor $M^{(-)}$ to dualize the morphisms indicated in the figure, we get the maps that define the pullback diagram (\ref{basaction}).
\end{example}

Functoriality of the above pullback-construction in \ref{cleavageaction} gives us

\begin{proposition}\label{cleavagefunctor}
The construction of \ref{cleavageaction} defines an action of $\Cleav_{S^n}$ on $M^{S^n}$ as an element of the symmetric monoidal category $\left(\Corr(\Top),\times\right)$.
\end{proposition}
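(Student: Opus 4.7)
The plan is to verify that the assignment from Construction \ref{cleavageaction} respects operadic composition, units, and the $\Sigma_k$-action, after first checking that it is well-defined on chop-equivalence classes. For well-definedness, Definition \ref{blueprintdefine} realises $\beta_{[T,\underline{P}]} = D^{n+1} \setminus \bigcup_{i=1}^{k} D^{n+1}_i$ as a function only of the ordered list of top-level timbers, and the remaining ingredients---$\pi_0(\beta_{[T,\underline{P}]})$, the collapsing map $c$, the restriction $\res$, and the inclusion $\iota_{[T,\underline{P}]}$---are built from the timbers and the blueprint alone. Since chop-equivalence is defined precisely to preserve the ordered list of top-level timbers, both legs of the correspondence are independent of the chosen representative. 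For the unit $\One_U$, represented by $L_1$ with empty decoration, the blueprint is empty, the unique timber is $U$, and the pullback collapses so that both $\iota$ and $\phi^*$ become the identity on $M^{S^n}$. For $\sigma \in \Sigma_k$ the correspondence for $\sigma.[T,\underline{P}]$ differs from that of $[T,\underline{P}]$ only by $\sigma$ permuting the factors of $(M^{S^n})^k$, which matches the $\Sigma_k$-action in $\End^{\Corr(\Top)}_{M^{S^n}}$.

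The main step is operadic composition. Given $[T,\underline{P}] \in \Cleav_{S^n}(U;k)$ with top-level timbers $N_1,\ldots,N_k$ and $[T',\underline{P}'] \in \Cleav_{S^n}(N_i;m)$ with top-level timbers $N'_1,\ldots,N'_m$, grafting produces a cleavage whose top-level timbers are $(N_1,\ldots,N_{i-1},N'_1,\ldots,N'_m,N_{i+1},\ldots,N_k)$ and whose blueprint equals $\beta_{[T,\underline{P}]} \cup \beta_{[T',\underline{P}']} \subseteq D^{n+1}$, using that the second cleavage takes place inside the sub-disk $D^{n+1}_i$. The plan is to identify the fibre product
\[
M^{S^n}_{[T,\underline{P}]} \times_{M^{S^n}} M^{S^n}_{[T',\underline{P}']}
\]
formed along the $i$-th projection of $\phi^*$ and the inclusion $\iota_{[T',\underline{P}']}$---which by definition is the composite of the two correspondences in $\Corr(\Top)$---with $M^{S^n}_{[T,\underline{P}] \circ_i [T',\underline{P}']}$. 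Unwinding definitions, a point of the fibre product is a function $f \in M^{S^n}$ constant on each component of $\beta_{[T,\underline{P}]}$ whose $i$-th projection under $\phi^*$ is in turn constant on each component of $\beta_{[T',\underline{P}']}$, and I would then show this is equivalent to constancy on each component of the union of the two blueprints.

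The hard part will be this last equivalence: a component of $\beta_{[T,\underline{P}]}$ that meets $\partial D^{n+1}_i$ may fuse with a component of $\beta_{[T',\underline{P}']}$ into a single component of the composite blueprint, and one must check that the two constancy conditions agree on such fused components. The crucial observation is that on the fibre-product side the gluing forces the $i$-th projection to take the value assigned by $c$ at every point of $\partial N_i$ that lies in the old component, while on the composite side the forced constancy on the fused component imposes the same equality; thus the two conditions coincide pointwise on the overlap. Associativity and the second associativity law of \ref{colouroperaddefine} then follow formally from associativity of pullbacks in $\Top$, completing the verification that $\Phi_{S^n}$ is a functor of coloured operads valued in $\Corr(\Top)$.
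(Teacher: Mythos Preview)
Your proposal is correct and follows the same approach as the paper: the paper's entire argument is the single clause ``Functoriality of the above pullback-construction in \ref{cleavageaction} gives us'' preceding the statement, and you have supplied the verification that this functoriality entails---well-definedness on chop-classes, the unit and $\Sigma_k$-compatibilities, and the identification of the composed correspondence with the pullback for the grafted cleavage. One small point of care: your formula $\beta_{[T,\underline{P}]\circ_i[T',\underline{P}']}=\beta_{[T,\underline{P}]}\cup\beta_{[T',\underline{P}']}$ is literally correct only after restricting the inner blueprint to $D^{n+1}_i$ (as you implicitly acknowledge with ``the second cleavage takes place inside the sub-disk $D^{n+1}_i$''), since by Definition~\ref{blueprintdefine} the hyperplanes of $\underline{P}'$ are seen as cleaving all of $D^{n+1}$; this does not affect your fused-component analysis.
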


\begin{proposition}\label{resfibration}\label{kminusone}
The map $\res$ is a fibration.
\end{proposition}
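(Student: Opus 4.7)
The plan is to reduce the statement to the well-known fact that pulling back to a closed cofibration yields a Hurewicz fibration of mapping spaces, and then to verify the cofibration condition in our setting using the Schönflies description from \ref{schoenflies}.

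First, observe that $\res$ is the product of the $k$ individual restriction maps
$$
\res_i \colon M^{S^n} \to M^{\complement S^n_i}, \qquad f \mapsto f|_{\complement S^n_i},
$$
and since a finite product of Hurewicz fibrations is a Hurewicz fibration, it suffices to show that each $\res_i$ is a fibration. For this I invoke the standard result that if $A \hookrightarrow B$ is a closed cofibration between reasonable spaces, then for any target $M$ the induced map $M^B \to M^A$ is a Hurewicz fibration; this is a direct consequence of the exponential adjunction together with the homotopy extension property for the pair $(B,A)$.

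The remaining step is therefore to verify that the inclusion $\complement S^n_i \hookrightarrow S^n$ is a closed cofibration. By construction $\complement S^n_i$ is the closed complement of a top-dimensional open submanifold cleaved out by finitely many affine hyperplanes, and by \ref{schoenflies} it is homeomorphic to a disjoint union of wedges of closed disks embedded in $S^n$. Away from the wedge points and corners this is manifestly a codimension-zero submanifold inclusion, and at the wedge/corner points one gets a semi-algebraic subset of $S^n$. I would produce an explicit neighborhood deformation retract $(U,\complement S^n_i)$ of $(S^n,\complement S^n_i)$ by pushing slightly inward along the outward normals of the decorating hyperplanes (using a partition of unity near the corners to smooth the pieces together). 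Producing a retract $U \to \complement S^n_i$ together with a homotopy of $U$ into $\complement S^n_i$ relative to $\complement S^n_i$ gives Strøm's criterion for a closed cofibration.

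The main obstacle will be handling the corners and wedge points where several of the cleaving hyperplanes meet $S^n$ simultaneously: the naive radial retraction along a single hyperplane normal breaks down there, so one has to patch together the local retractions coherently. Since $[T,\underline{P}]$ is cleaving (hyperplanes meet $S^n$ transversally), the local model at any such corner is a finite intersection of smooth half-spaces inside a collar of $S^n$, which is again a manifold with corners; for such pairs the inclusion is known to be a cofibration, either by a direct patching argument using a partition of unity subordinate to the hyperplane decomposition, or by noting that $(S^n,\complement S^n_i)$ admits a finite CW-structure. Once this cofibration step is in place, fibrancy of $\res_i$ and hence of $\res$ follows immediately.
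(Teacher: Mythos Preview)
Your proposal is correct and follows essentially the same route as the paper: the paper's proof is the one-liner that the inclusions $\complement S^n_i \hookrightarrow S^n$ are closed cofibrations and $\res$ is obtained by applying the mapping-space functor $M^{(-)}$. You simply elaborate more than the paper does on why those inclusions are closed cofibrations; the paper takes this as evident and omits the corner/wedge-point discussion entirely.
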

\begin{proof}
Follows directly since the inclusions $\complement {S^n}_i \to S^n$ are closed cofibarations, and $\res$ is the dualization under the mapping space functor $M^-$.
\end{proof}

%up to homotopy the codimension of the map $\phi$ remains constant.

The rest of this section is devoted to give a hint at why the action through correspondences of $\Cleav_{S^n}$ gives rise to a homological action of $\h_*(\Cleav_{S^n})$ on $\hh_*(M^{S^n})$. As mentioned in the introduction, we give the full action in \cite[Th 1.1]{BargheerPuncture}; in this present section we only give examples and observations that can be seen as a justification of our definition of $\Cleav_{S^n}$ and a warm-up to the homological action.

 In obtaining homological actions of $\Cleav_{S^n}$ on $M^{S^n}$, there is a shift of degrees by the dimension of $M$, the following proposition gives the insight along spaces that this holds.

\begin{proposition}\label{constantnormal}
For any $[T,\underline{P}] \in \Cleav_{S^n}(-;k)$, with associated timber ${S^n}_1,\ldots,{S^n}_k$ the number
$$
\left|\pi_0\left(\coprod_{i=1}^k \complement {S^n}_i\right)\right| - \left|\pi_0\left(\beta_{[T,\underline{P}]}\right)\right|
$$
will be constantly $k-1$ for all $[T,\underline{P}] \in \Cleav_{S^n}(-;k)$.%along components of $\Cleav_{S^n}(-;k)$. 

%This constant is $k-1$.
\end{proposition}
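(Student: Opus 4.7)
The plan is to argue by induction on the arity $k$. The base case $k = 1$ is immediate: the tree $T$ has no trivalent vertices, the unique timber equals $S^n$, and both $\complement S^n$ and the blueprint are empty, so the formula reads $0 - 0 = 0$.

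For the inductive step, I pick a trivalent vertex $v$ of $T$ both of whose children are leaves; such a vertex exists since $T$ is finite and non-trivial. Contracting $v$ and deleting its decoration $P_v$ produces a cleavage $[T', \underline{P}']$ of arity $k$, for which the formula holds by hypothesis. It then suffices to prove that reinserting the cut at $v$ increases the difference $c - b$ by exactly $1$, where $c = |\pi_0(\coprod_i \complement S^n_i)|$ and $b = |\pi_0(\beta_{[T,\underline{P}]})|$.

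The change-of-count calculation proceeds as follows. The cut $P_v$ splits the old timber $S^n_0$ at the merged leaf into two new timbers $S^n_*$ and $S^n_{**}$. Writing $A = \complement S^n_0$, whose components are contractible by \ref{schoenflies}, and letting $W = P_v \cap \Rec(S^n_0)$ be the new wall adjoined to the blueprint, denote by $\gamma$ the number of components of $A$ whose boundary meets both $\overline{S^n_*}$ and $\overline{S^n_{**}}$, and by $w$ the number of old blueprint components met by $W$. Since $\complement S^n_* = A \cup \overline{S^n_{**}}$ and symmetrically for $\complement S^n_{**}$, a direct component count yields $c_{\mathrm{new}} - c_{\mathrm{old}} = 2 - \gamma$; since $W$ fuses with the $w$ old blueprint components it meets into a single new blueprint component, $b_{\mathrm{new}} - b_{\mathrm{old}} = 1 - w$. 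Combining, the net change in $c - b$ equals $1 + (w - \gamma)$, so the induction closes precisely when $w = \gamma$.

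Proving $w = \gamma$ will be the main obstacle. My approach would be to exploit the topological decomposition $\partial \Rec(S^n_0) = \overline{S^n_0} \cup F_0$, glued along $\partial S^n_0$, where $F_0$ is the union of old blueprint facets bordering the region and $\partial \Rec(S^n_0)$ is a topological $n$-sphere. By the generalized Sch\"onflies theorem invoked in \ref{schoenflies}, the components of $A$ correspond canonically to the components of $F_0$, with matching boundary data along $\partial S^n_0$. The boundary $\partial W$ is an embedded $(n-1)$-sphere on $\partial \Rec(S^n_0)$ whose trace on $\overline{S^n_0}$ is the cut $E_v \cap \overline{S^n_0}$ separating the new timbers, and under the correspondence a component of $A$ straddles $E_v$ precisely when the matching component of $F_0$ is cut by $\partial W$. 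It remains to argue that distinct cut components of $F_0$ lie in distinct old blueprint components, which is the subtle point; I expect this to follow from the convexity of $\Rec(S^n_0)$ together with the recursive structure of $T$, so that any two hyperplanes both contributing facets to $F_0$ must already meet within $\overline{\Rec(S^n_0)}$ whenever they lie in a common old blueprint component, forcing the corresponding facets of $F_0$ to be connected.
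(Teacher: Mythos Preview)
Your route is genuinely different from the paper's. The paper does \emph{not} induct on the tree; it argues that the quantity $c-b$ is locally constant on $\Cleav_{S^n}(-;k)$ by analysing a path along which two hyperplanes' intersection crosses $S^n$ (so that $|\pi_0(\beta)|$ jumps by one), checks that exactly one complement $\complement S^n_j$ simultaneously gains a component, and then evaluates at a configuration of parallel hyperplanes. Your inductive reduction to the identity $w=\gamma$ is clean and would give a more combinatorial proof if it could be completed. Two small points: the contracted cleavage has arity $k-1$, not $k$; and your count $c_{\mathrm{new}}-c_{\mathrm{old}}=2-\gamma$ silently uses that both new leaf timbers are connected, which for $n\geq 2$ is guaranteed by Remark~\ref{chipnotchop} but should be stated.

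The genuine gap is the proof of $w=\gamma$. You correctly isolate the subtle step as showing that distinct components of $F_0$ meeting $\partial W$ lie in distinct components of the old blueprint, but your heuristic does not establish this. The claim ``two hyperplanes contributing facets to $F_0$ and lying in a common blueprint component must already meet inside $\overline{\Rec(S^n_0)}$'' is not obvious: bounding hyperplanes of the convex region $\Rec(S^n_0)$ can intersect inside $D^{n+1}$ yet outside $\overline{\Rec(S^n_0)}$, and cuts in other branches of $T'$ can in principle link separate facets through parts of $\beta'$ away from $\partial\Rec(S^n_0)$. Ruling this out requires a real argument exploiting the nesting of the regions $\Rec(U_v)$ along the root-to-leaf path, and you have not supplied one. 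As it stands you have relocated the entire difficulty of the proposition into the unproved lemma $w=\gamma$; the paper's deformation argument sidesteps this by trading the combinatorics of a single cut for a local analysis in the moduli space, where the one-to-one matching between a splitting of $\beta$ and a splitting of some $\complement S^n_j$ is directly visible.
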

\begin{proof}
Note that $\beta_{[T,\underline{P}]}$ constitute the boundary of the disjoint union of wedges of disks $\coprod_{i=1}^k \complement {S^n}_i$, so 
%Since the hyperplanes are defined to cleave $N$ transversally, 
the number $\left|\pi_0\left(\coprod_{i=1}^k \complement {S^n}_i\right)\right|$ will be constant as long as $\left|\pi_0\left(\beta_{[T,\underline{P}]}\right)\right|$ is constant.

Consider a path $\gamma \colon [0,1] \to \Cleav_{S^n}(-;k)$ with $\left|\pi_0(\beta_{\gamma(0)})\right| = \left|\pi_0(\beta_{\gamma(1)})\right| + 1 $ and such that for a specific $t_0 \in [0,1]$, we have for all $t \leq t_0, \left|\pi_0(\beta_{\gamma(t)})\right| = \left|\pi_0(\beta_{\gamma(0)})\right|$ and for all $t > t_0, \left|\pi_0(\beta_{\gamma(t)})\right| = \left|\pi_0(\beta_{\gamma(1)})\right|$.

By definition of the cleaving proces, $\gamma(t_0)$ will have two hyperplanes $P_l,P_r$ such that $P_l \cap P_r \cap D^{n+1}$ is a nontrivial subspace of $S^n$, and for any  $\varepsilon > 0$, the same intersection for the hyperplanes of $\gamma(t_0 + \varepsilon)$ will be trivial; meaning that for sufficiently small $\varepsilon$, such that the hyperplanes do not become parallel, $P_l \cap P_r \subseteq \RR^{n+1}$ will be contained in $\RR^{n+1} \setminus D^{n+1}$. This has the effect that there is precisely one $j \in \{1,\ldots,k\}$ such that the complement of the timber indexed by $j$, $\complement D^{n+1}_j$, for $\gamma(t_0)$ has a connected component containing $P_l \cap P_r \cap D^{n+1}$, whereas for $\gamma(t_0 + \varepsilon)$ this becomes disconnected with different boundary components of $\complement {S^n}_i$ being formed using intersections with $P_l$ and $P_r$ respectively. Hence for these basic types of paths, an increase in $\left|\pi_0(\beta_{[T,\underline{P}]})\right|$ leads to an equal increase in $\left|\pi_0(\coprod_{i=1}^k \complement {S^n}_i)\right|$.

One can use these paths to parametrize a single cleaving hyperplane moving within $\Cleav_{S^n}(-;k)$, while the other $k-2$ hyperplanes remain fixed. From such parametrizations, one puts together a general path moving all hyperplanes of $\Cleav_{S^n}(-;k)$ and the result follows.

To compute the constant, take a configuration of hyperplanes where all hyperplanes are parallel, so that $\beta_{[T,\underline{P}]}$ has $k-1$ components. In this case the space of complements of the associated timber, $\coprod_{i=1}^k \complement {S^n}_i$ will have $(k-2)$ spaces $\complement {S^n}_i$ that consist of two disjoint spaces, and the two extremal complements that consist of a single subdisks of $S^n$. Hence in total $2(k-2) + 2 = 2(k-1)$ components. In effect, the constant will be $2(k-1) - (k-1) = k-1$. %This number will affect the homological grading of the homological action we derive in chapter \ref{puncturedchapter}.

\end{proof}

%\begin{remark}\label{notonlyspheres}
%The above \ref{constantnormal} is only stated for the particular case $N := S^n$. However the only place in the proof where we use the nature of $S^n$ is to identify that the number $\left|\pi_0\left(\coprod_{i=1}^k \complement N_i\right)\right|$ will be constant as long as $\left|\widehat{\pi_0\left(\beta_{[T,\underline{P}]}\right)}\right|$ is constant.

%Again, we need to work with $\left|\widehat{\pi_0\left(\beta_{[T,\underline{P}]}\right)}\right|$ as defined in \ref{nonconvexblueprint} for more general $N$, however this also fits directly into the proof. 

%Since the hyperplanes of $\Cleav_N(-;k)$ are required to cleave $N$ transversally, one can indeed use Morse theory to obtain this initial statement of the proof for a general embedded manifold $N$. However, we shall not go into detail with this, as we shall only apply \ref{constantnormal} in the case where $N = S^n$.

%Of course in the more general case, this constant will no longer be $k-1$, and indeed the constant will potentially vary along components of $\Cleav_N(-;k)$. With $\Cleav_{S^n}(-;k)$ connected, basically since all hyperplanes will cleave $S^n$ transversally, this variance along components is not part of the statement of \ref{constantnormal}.
%\end{remark}

\begin{remark}
Note that as we don't use it in the proof, \ref{constantnormal} holds even if we drop the assumption in \ref{NCleave} that for all elements of $\Cleav_{S^n}(-;k)$ the associated timber $S^n_1,\ldots,S^n_k$ should satisfy $\complement {S^n}_i \simeq \coprod_{\textrm{finite}} *$. In fact, this assumption is not needed for this paper, but will only be applied in \cite{BargheerPuncture} to define homological actions from the correspondences of \ref{cleavageaction}. 

%This is also the case for an extended version of \ref{constantnormal} to $\Cleav_N$ as indicated in \ref{notonlyspheres}
\end{remark}

\begin{remark}\label{singleumkehr}
What we in fact show in \cite{BargheerPuncture} is something stronger than a homological action, namely that there is a stable action map, residing in the category of spectra:

\begin{eqnarray}\label{mentionstable}
\Cleav_{S^n}(-;k) \times \left(M^{S^n}\right)^k \to M^{S^n} \land S^{\dim(M)\cdot(k-1).}
\end{eqnarray}
Smashing the above map with the Eilenberg-Maclane spectrum, and taking homotopy groups yields the action mentioned just above Theorem B of the introduction.

We shall in \ref{twoaryex} below give an example of the action to illustrate the reasoning behind the definition of $\Cleav_{S^n}$. 

We shall first make some remarks on how the constructions of this section provides the foundation for how this map takes form.

First of all, note that since we in \ref{NCleave} have assumed that to $[T,\underline{P}] \in \Cleav_N(-;k)$ the associated timber $N_1,\ldots,N_k$ will have $\complement N_i$ consist of a finite disjoint union of contractible spaces. Therefore, in the diagram (\ref{basaction}), the space $\prod_{i=1}^k M^{\complement N_i}$ is a Poincar\'e duality space. That is, up to homotopy it is equivalent to a product of copies of $M$.

In the case $N = S^n$, taking such a specific $[T,\underline{P}]$ as a pointwise version of (\ref{mentionstable}), we should be able to obtain a map $\left(M^{S^n}\right)^k \to M^{S^n} \land S^{\dim(M) \times (k-1)}$. Indeed, there are methods available in the litterature to do this. For instance adapting the methods of \cite[p.14/'Umkehr maps in String Topology']{umkehrCohenKlein} provides such a map. Here it is crucial to note that the map $\phi$ is up to homotopy an embedding of codimension $\dim(M)\times(k-1)$ where the $(k-1)$-factor comes from \ref{kminusone}, which provides the dimension-shift in (\ref{mentionstable}).

Yet another method, formulated on the chain-level instead of spectra would be \cite[Theorem A]{StringGorenstein}. 

%However, getting from these pointwise umkehr maps to a map such as (\ref{mentionstable}) is a somewhat strenous hike. This is the focus of \cite{BargheerPuncture}. Note that while the crucial \ref{constantnormal} hints that it is possible to provide the maps for other $N$ than the euclidean embedded unit-spheres $S^n$, the geometry involved makes us look only at the interesting case $\Cleav_{S^n}$. 
\end{remark}

The complexity of the construction of the umkehr map rises with the arity of the involved maps. As the arity rises, the map $\phi$ of (\ref{basaction}) will by \ref{constantnormal} have constant codimension, whereas the actual dimension will be exposed to sudden jumps in the actual dimensions of the spaces $\phi$ is mapping between, as is indicated in the proof of \ref{constantnormal}. The coherence issues of higher arity lies in patching the instances of such jumps together. The following example illustrates the case of arity $2$ operations where there are no such jumps:

\begin{example}\label{twoaryex}
The $2$-ary portion $\Cleav_{S^n}(-;2)$ is a manifold, specified by a single cleaving hyperplane, it deformation retracts onto $S^n$, which is determined by the direction of the normal-vector of the cleaving hyperplane.

Consider the pullback diagram
\begin{eqnarray}\label{twoary}
\xymatrix{
M_{\Cleav_{S^n}(-;2)}^{S^n} \ar[r]\ar[d] & \left(M^{S^n}\right)^2 \times \Cleav_{S^n}(-;2)\ar[d]|{\res}\\
M  \times \Cleav_{S^n}(-;2)\ar[r] & M^{\coprod_{i=1}^2 \complement {S^n}_i}_{\Cleav_{S^n}(-;2),}
}
\end{eqnarray}
where $\complement S^n_1$ and $\complement S^n_2$ denotes the complement inside $S^n$ of the timber associated to $[T,P] \in \Cleav_{S^n}(-;2)$. Considering it as a set, $M^{\coprod_{i=1}^2 \complement S^n_i }_{\Cleav_{S^n}(-;2)}$ is given by the  disjoint union $\coprod_{[T,P] \in \Cleav_{S^n}(-;2)} M^{\complement N_i}$. The map $\res$ in the diagram is given by letting $\res(f_1,f_2,[T,P])$ be the restriction of $f_1$ to $\complement S^n_1$ and $f_2$ to $\complement S^n_2$ along the component indexed by $[T,P]$. We topologize $M^{\coprod_{i=1}^2 \complement S^n_i}_{\Cleav_{S^n}(-;2)}$ by making $\res$ a quotient map.

Note that by \ref{resfibration}, pointwise in $\Cleav_{S^n}(-;2)$, $\res$ is a fibration. One sees that the lifts of this global map can be constructed to be continous in $\Cleav_{S^n}(-;2)$ as well. 

We hereby have homotopy-equivalences $$M^{\coprod_{i=1}^2 \complement S^n_i}_{\Cleav_{S^n}(-;2)} \simeq M^2 \times \Cleav_{S^n}(-;2) \simeq M^2 \times S^n$$ and $$M \times \Cleav_{S^n}(-;2) \simeq M \times S^n. $$ Stating that the lower portion of the diagram is an embedding of Poincar\'e duality spaces. For instance applying one of the methods mentioned in \ref{singleumkehr}, this hereby provides the first sign of an action of $\Cleav_{S^n}$ on $M^{S^n}$, and taking homotopy groups of this map, we get a map in homology

$$
\h_*\left(\Cleav_{S^n}(-;2)\right) \otimes \h_*(M^{S^n})^{\otimes 2} \to \h_{*+\dim(M)}(M^{S^n})
$$
\end{example}

\section{Spherical Cleavages are $E_{n+1}$-operads}\label{encomputation}
We now devote energy to prove that $\Cleav_{S^n}$ is a coloured $E_{n+1}$-operad. A concept defined in \ref{enoperaddefine}.% Therefore, we concentrate on the case $N = S^N$, full-scale.

%\subsection{Decompositions of Cleavages}

%\begin{construction}\label{spheredecompose}
%To $i \in \{1,\ldots,k\}$, we have forgetful maps $F_i \colon \pCleac_{S^n}(U;k) \to \pCleac_{S^n}(U;k-1)$, forgetting the decoration at the internal vertex $\kappa_T(i)$. 

%Since $T$ is pruned, there is a leaf directly above $\kappa_T(i)$. Deleting the edge leading to the leaf. Subsequently interchanging $\kappa_T(i)$ and the two edges leading to it with a single edge. Reordering the remaining leafs by monotonicity provides a $U$-cleaving $F_i(T)$. Checking that the $\leftrightarrow_B$ and $\leftrightarrow_C$ are well-behaved with respect to the leaf being deleted, $F_i$ is seen to be well-defined.

%\fxnote{is it needed to say how $\pCleac_{S^n}$ decompose?}
%\end{construction}

\subsection{Combinatorics of Coloured $E_n$-operads}
The following combinatorial data of full level graphs is the main tool we use to describe a detection principle for $E_n$-operads. 
\begin{definition}\label{fullgraphoperad}
By a \emph{full level $(n,k)$-graph}, we shall understand a graph $G$ with $k$ vertices $v_1,\ldots,v_k$ such that all pairs $(v_i,v_j)$ are connected by precisely one edge $e_{ij}$. Let $\mathbf{n} := \{0,\ldots,n-1\}$. We let each of the $k \choose 2$ edges of $G$ be labelled by elements of $\mathbf{n}$.

%set of \emph{full level $n$-graph operad} $\mathpzc{K}^n$ is an operad having $\mathpzc{K}^n(k) = \mathbf{n}^{k \choose 2} \times \Sigma_k$ 
\end{definition}

We say that a full level $(n,k)$-graph $G$ is oriented if there to each edge $G$ $e_{ij}$ is designated a direction; either from $v_i$ to $v_j$ or from $v_j$ to $v_i$.

To $\sigma \in \Sigma_k$, there is a unique orientation of a full level $(n,k)$-graph, $G$. Namely by letting $e_{ij}$ point from $v_i$ to $v_j$ if $\sigma(i) < \sigma(j)$, and point from $v_j$ to $v_i$ if $\sigma(j) < \sigma(i)$. We call $\sigma$ the permutation associated to $G$. 

Indeed, assuming that this orientation of $G$ is oriented with no cycles and comes equipped with a sink and a source, one can reconstruct the permutation $\sigma_G$  associated to $G$ from the orientation of $G$: Let the index of the sink of $G$ be mapped to $k$ under $\sigma_G$, and successively remove the sink of an oriented full  $(n,i)$-graph with sink, source and no cycles to obtain a full $(n,i-1)$-graph with induced orientation being guaranteed a sink by the pigeonhole principle. The index of this sink is mapped to $i$ under $\sigma_G$. Continuing this process until only the source of $G$ is left makes $\sigma_G$ a well-defined permutation since $G$ has no cycles.

\begin{definition}\label{fullpermutegraph}
Let $\mathpzc{K}^n(k)$ denote the set of full level $(n,k)$-graphs, oriented via $\Sigma_k$ as above. This gives a bijection $\mathbf{n}^{k \choose 2} \times \Sigma_k \leftrightarrow \mathpzc{K}^n(k)$.
\end{definition}

We have that $\mathpzc{K}^n$ is an operad: To $G_k \in \mathpzc{K}^n(k)$ and $G_m \in \mathpzc{K}^n(m)$, inserting the $m$ vertices of $G_m$ instead of $i$'th vertex of $G_k$, we obtain a full level $(n,m+k-1)$ graph $G_k \circ_i G_m$, by labelling and orienting the edges of $G_k \circ_i G_m$ in the following way: 

Since we have replaced the $i$'th vertex of $G_k$ with $k$ vertices from $\mathpzc{K}^n(m)$, $G_k$ lives as a subgraph of $G_k \circ_i G_m$ in $k$ different ways -- one for each choice of replacement vertex for $i$ in $G_m$. The graph $G_m$ has all its vertices retained in $G_k \circ_i G_m$, so there is only one choice of subgraph for $G_m$.

We label and orient all edges of $G_k \circ_i G_m$ via the labellings and orientations of the possibilities of subgraphs $G_k$ and $G_m$.

An example operadic composition is given in figure \ref{fullgraphpic}

\begin{center}
\begin{figure}[h!tb]
\centerline{\includegraphics[scale=0.8]{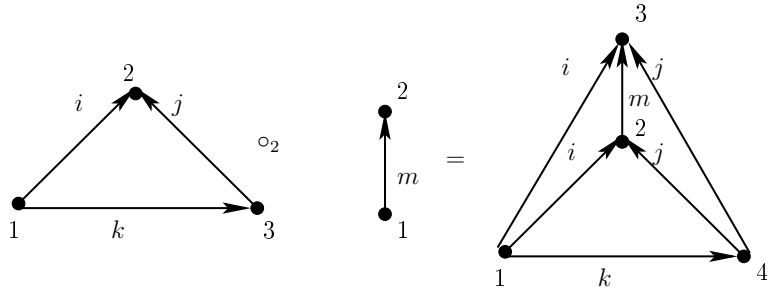}}
\caption{An operadic composition of a $3$-ary with a $2$-ary operation in the full-graph operad. The labellings $i,j,k,m$ are elements of $\mathbf{n}$. Note that in the $4$-ary operation both graphs are contained in the final result, and we copy the labelling of the edges that are going to the vertex the operadic composition is happening at}
\label{fullgraphpic}
\end{figure}
\end{center}

%$G^n(k)$ is a subgraph in $m$ different ways, by choosing the $i$'th vertex as one of the vertices in $G^n(m)$. This therefore defines all labellings of edges in $G^n(k+m-1)$. Since operadic composition in $\Sigma_k$ is done by block sums, all directions of arrows are retained in the bigger class -- and all arrows going between the two subgraphs $G^n(k)$ and $G^n(m)$ of $G^n(k+m-1)$ will point from $G^n(k)$ to $G^n(m)$.

%The above describes $\mathpzc{K}^n$ as an operad in $\Set$. However, we can consider it as an operad in the category of partially ordered sets.

%As an alternative definition, we can let $\mathpzc{K}^n(k) := \mathbf{n}^{k \choose 2} \times \Sigma_k$ since this is precisely the data used to describe a full graph ordered as in \ref{fullgraphoperad}. 
\begin{observation}\label{posetstructure}
Using \ref{fullpermutegraph}, we see how $\mathpzc{K}^n$ is an operad of posets. First of all, letting $\mathpzc{K}^n(2) = \mathbf{n} \times \Sigma_2$ be given by setting $\Sigma_2 = \{\id,\tau\}$ and partially ordering through the arrows of the diagram

\begin{eqnarray}\label{fullgraphorder}
\xymatrix{
(0,\id)\ar[r]\ar[dr] & (1,\id)\ar[r]\ar[dr] & \cdots\ar[r]\ar[dr] & (n,\id)\\
(0,\tau)\ar[r]\ar[ur] & (1,\tau)\ar[r]\ar[ur] & \cdots\ar[r]\ar[ur] & (n,\tau)
}
\end{eqnarray}

%\begin{definition}\label{posetstructure}
Consider the maps $\gamma_{ij} \colon \mathpzc{K}^n(k) \to \mathpzc{K}^n(2)$, sending $G_k \in \mathpzc{K}^n(k)$ to its subgraph, with one edge, spanned by the vertices $v_i$ and $v_j$. Following \cite[1.5]{BergerCellular}, we let the partial ordering be given by the coarsest ordering such that $\gamma_{ij}$ is order preserving for all $i< j \in \{0,\ldots,k\}$. 
\end{observation}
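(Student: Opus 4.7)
The plan is to establish the observation in two stages: first, that the prescription $G \leq G'$ iff $\gamma_{ij}(G) \leq \gamma_{ij}(G')$ in $\mathpzc{K}^n(2)$ for all $i<j$ actually defines a partial order, and second, that the operadic composition $\circ_i$ and the $\Sigma_k$-action on $\mathpzc{K}^n(k)$ are both order-preserving with respect to this order, so that $\mathpzc{K}^n$ assembles into an operad internal to the category of posets.

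First I would verify well-definedness. The collection of maps $\gamma_{ij}$ assembles into a map $\Gamma \colon \mathpzc{K}^n(k) \to \prod_{i<j} \mathpzc{K}^n(2)$, and the proposed order is precisely the pullback of the product order on the target. For this pullback to be a partial order (rather than only a preorder) it suffices to know that $\Gamma$ is injective. But by \ref{fullpermutegraph}, an element $G \in \mathpzc{K}^n(k)$ is specified by an $\mathbf{n}$-labelling of its $\binom{k}{2}$ edges together with the orientation induced by some $\sigma \in \Sigma_k$; and $\gamma_{ij}(G)$ records precisely the label on $e_{ij}$ together with the local orientation of that single edge. The per-edge orientations reassemble into the global orientation of $G$, and by the iterated sink-removal argument given just above \ref{fullpermutegraph}, a global acyclic orientation with sink and source recovers $\sigma$ uniquely. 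Hence $\Gamma$ is injective, the pullback order is a genuine partial order, and by construction it is the finest order on $\mathpzc{K}^n(k)$ that makes every $\gamma_{ij}$ order-preserving into the fixed target order on $\mathpzc{K}^n(2)$.

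Next I would check that $\circ_i$ preserves the order. Given $G_k \leq G_k'$ and $G_m \leq G_m'$, it suffices to verify $\gamma_{pq}(G_k \circ_i G_m) \leq \gamma_{pq}(G_k' \circ_i G_m')$ for every $p<q$ in $\{1,\ldots,k+m-1\}$. From the description of $\circ_i$ given above \ref{fullgraphpic} — where both $G_k$ and $G_m$ survive as distinguished subgraphs of the composite and every edge of the composite inherits its label and orientation from a corresponding edge of $G_k$ or $G_m$ — one checks case by case (both endpoints from the inserted copy of $G_m$; both from $G_k$; or one from each, in which case the label and orientation come from the unique edge of $G_k$ incident to the $i$th vertex in the relevant direction) that $\gamma_{pq}(G_k \circ_i G_m)$ equals $\gamma_{p'q'}(G_k)$ or $\gamma_{p'q'}(G_m)$ for the appropriate indices. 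The hypothesis then supplies the required inequality in each case. For the $\Sigma_k$-action, the identity $\gamma_{ij}(\sigma \cdot G) = \gamma_{\sigma^{-1}(i),\sigma^{-1}(j)}(G)$ (with the $\Sigma_2$-factor flipped when $\sigma^{-1}(i) > \sigma^{-1}(j)$, which is compatible with the symmetric picture in diagram \ref{fullgraphorder}) shows that $\sigma$ acts on $\prod_{i<j} \mathpzc{K}^n(2)$ by a reindexing of factors, under which the product order is manifestly preserved.

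The main obstacle I anticipate is precisely the bookkeeping in the composite: one has to confirm that the orientation of each edge of $G_k \circ_i G_m$ really does agree with the orientation recorded by the corresponding subgraph-restriction, which amounts to checking that the permutation underlying the composite equals the operadic composition $\sigma_{G_k} \circ_i \sigma_{G_m}$ in $\Sigma_{k+m-1}$. Once one has unpacked \ref{fullgraphpic} in this way, the remaining comparisons reduce to edge-by-edge inspection and the observation falls out.
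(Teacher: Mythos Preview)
The paper treats this as a definitional observation and gives no separate proof: it simply declares the partial order on $\mathpzc{K}^n(2)$ by the diagram, pulls it back edgewise via the maps $\gamma_{ij}$, and refers to \cite[1.5]{BergerCellular} for the construction. Your proposal is therefore not competing with a proof in the paper; it is supplying the details the paper chose to suppress.

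Your verification is correct and complete. The injectivity argument for $\Gamma$ is exactly what is needed to upgrade the pulled-back preorder to a partial order, and your case analysis for $\circ_i$ (both indices in the grafted-in $G_m$; both in $G_k$; one in each, inheriting from the edge of $G_k$ incident at $v_i$) matches the description of the composition given just above the figure. The compatibility of the underlying permutations you flag as the ``main obstacle'' is indeed routine once unpacked.

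One small slip: you call the pullback order the \emph{finest} order making each $\gamma_{ij}$ order-preserving, but the discrete order already has that property; the pullback order is the \emph{coarsest} such, which is what the paper says. This does not affect the rest of your argument.
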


%\begin{definition}
%Given a monochrome operad $\mathpzc{O}$ in a symmetrical monoidal category $(\mathcal{C},\times)$, and $X$ an object of $\mathcal{C}$, we define the \emph{$X$-tagged coloured operad} $\mathpzc{O} \boxtimes X$ to be the coloured operad, with objects $X$. Further, we let $(\mathpzc{O} \boxtimes X)(-;k) = \mathpzc{O}(k) \times X^{k+1}$. This turns $\mathpzc{O} \boxtimes X$ into a coloured operad enriched in $\mathcal{C}$, with $\circ_i \colon \mathpzc{O}(k) \times X^{k+1} \times_X \mathpzc{O}(m) \times X^{m+1}$ given by 
%$$\circ_i\left((\omega_k,x_0,\ldots,x_{k}),(\omega_m,x_i,y_1,\ldots,y_m)\right) = 
%$$$$(\omega_k \circ_i \omega_m,x_0,\ldots,x_{i-1},y_1,\ldots,y_m,x_{i+1},\ldots,x_k).$$
%
%\end{definition}

\begin{definition}
Given $\mathpzc{O}$ a coloured operad, let $S(\mathpzc{O})$ denote the coloured operad in posets, with objects the subsets of $\Ob(\mathpzc{O})$, and $k$-ary morphisms subsets of $\mathpzc{O}(-;k)$. The operad is an operad in posets through inclusions of subspaces.

That is, we let $\Ob(S(\mathpzc{O})) = \{0,1\}^{\Ob(\mathpzc{O})}$.
Let $S(\mathpzc{O})(-;k):= \{0,1\}^{\mathpzc{O}(-;k)}$. These fit into the diagram
$$
\xymatrix{
S(\mathpzc{O})(-;k+m-1) & S(\mathpzc{O})(-;m) \times_{\Ob(S(\mathpzc{O}))} S(\mathpzc{O})(-;k)\ar[l]_(.6){\circ_i}\ar[r]\ar[d] & S(\mathpzc{O})(-;k)\ar[d]|{\ev_i}\\
& S(\mathpzc{O})(-;m)\ar[r]^{\ev_{\inc}} & \Ob(S(\mathpzc{O}))
}
$$

Where as usual $\ev_i$ evaluates at the $i$'th colour of $S(\mathpzc{O})(-;k)$, given as a subset of $\Ob(\mathpzc{\mathpzc{O})}$ and $\ev_{\inc}$ evaluates the incoming colour of $S(\mathpzc{O})(-;m)$.

Operadic composition is induced from composition in $\mathpzc{O}$, pointwise. In the sense that $\circ_i \colon S(\mathpzc{O})(-;m) \times_{\Ob(S(\mathpzc{O}))} S(\mathpzc{O})(-;k) \to S(\mathpzc{O})(-;k+m-1)$ is given by the subset of $\mathpzc{O}(-;k+m-1)$ obtained by to any point of $\mathpzc{O}(-;m) \times_{\Ob(\mathpzc{O})} \mathpzc{O}(-;k)$ as an element of $S(\mathpzc{O})(-;m) \times_{\Ob(S(\mathpzc{O}))} S(\mathpzc{O})(-;k)$ applying the $\circ_i$-operation from $\mathpzc{O}(-;m) \times_{\Ob(\mathpzc{O})} \mathpzc{O}(-;k)$ to $\mathpzc{O}(-;k+m-1)$, and taking the union over the specific subset of these compositions.
\end{definition}

Recall from \ref{colouren} that to a monochrome operad $\mathpzc{P}$ and a space $X$, the coloured topological operad $\mathpzc{P} \times X$ is coloured over $X$ and the $k$-ary morphisms are formed by taking the cartesian product of $\mathpzc{P}(k)$ with $X$. 

The Berger Cellularization Theorem, written in a monochrome fashion in \cite[Th. 1.16]{BergerCellular} hereby transfers to our coloured setting:

%The power set topology gives $S(\mathpzc{O})$ topological structure in the sense of \ref{topologicaloperad}

\begin{theorem}\label{cellulite}
Let $\mathpzc{O}$ be a topological coloured operad. 
The operad $\mathpzc{O}$ is a coloured $E_n$-operad, \ref{colouren} if there is a functor $F_k \colon \mathpzc{K}^n(k) \times S(\Ob(\mathpzc{O})) \to S(\mathpzc{O})(-;k)$ that is, both a functor with respect to the poset structure as well as a morphism of coloured operads, satisfying the following:

\begin{itemize}
%\item[(A)] For $\alpha \in \mathpzc{K}^n(k)$, and $C_0 \in S(\Ob(\mathpzc{O}))$ there is precisely one choice of $\{C_1,\ldots,C_k\} \in S(\Ob(\mathpzc{O}))^{k}$ such that $F_k(\alpha,C_0,C_1,\ldots,C_k)$ is non-empty. For $C_0 \hookrightarrow B_0 \in S(\Ob(\mathpzc{O}))$ and $\alpha \leq \beta \in \mathpzc{K}^n(k)$ this hence induces a morphism $F_k(\alpha,C_0,\ldots,C_k) \to F_k(\beta,B_0,\ldots,B_k)$ that is non-empty for unique choices of $C_1,\ldots,C_k$ and $B_1,\ldots,B_k$.

\item[(A)] Let $C_0 \in S(\mathpzc{\Ob(\mathpzc{O})}$. %with $F_k(\alpha,C_0,\ldots,C_k) \neq \emptyset$. 
The \emph{latching space of $\alpha \in \mathpzc{K}^n(k)$} is given by $L_{(\alpha,C_0)} := \bigcup_{\beta < \alpha} F_k(\beta,C_0)$. We require that the morphism $$L_{(\alpha,C_0)} \hookrightarrow F_k(\alpha,C_0)$$ is a cofibration.%, where $C_1',\ldots,C_k' \in S(\Ob(\mathpzc{O}))$ is the unique choice of (A). %These cofibrations should be natural with respect to inclusions $C_0 \hookrightarrow C_0'$ along the inputs
\item[(B)] For all $\alpha \in \mathpzc{K}^n(k)$, we require maps $F_k(\alpha,B) \to B$ where $B \in S(\Ob(\mathpzc{O}))$ such that these are weak equivalences, and natural with respect to morphisms $B \hookrightarrow C \in S(\Ob(\mathpzc{O}))$. %between the functor $F_k(\alpha,-)$. %and the identity funtor on $S(\Ob(\mathpzc{O}))$.
% $C_0 \in S(\mathpzc{\Ob(\mathpzc{O})})$ the space $F_k(\alpha,C_0)$ is weakly equivalent to $C_0$, such that for $C_0 \hookrightarrow D_0$ there should be a natural transformation of weak equivalences between $F_k(\alpha,C_0) \simeq C_0$ and $F_k(\alpha,D_0) \simeq D_0$. %; in the sense that it decomposes as $F_k(\alpha,C_0,\ldots,C_k) \simeq C_0 \times *$ for each $\alpha \in \mathpzc{K}^n(k)$ and $C_iC \in \Ob(\mathpzc{O})$.\fxnote{constant on inputs?} %This contraction is constant on the inputs, in the sense that $F_k^i(\alpha,C) \simeq C \hookrightarrow C' \simeq F_k^i(\beta,C')$ for $\alpha,\beta \in \mathpzc{K}^n(k)$, $C \hookrightarrow C' \in \mathfrak{C}$ and $i \in \{0,\ldots,k\}.$%Further, this contraction is natural in the inputs, such that 
\item[(C)] $\colim_{(\alpha,C_0) \in \mathpzc{K^n(k)} \times \Ob(S(\mathpzc{O}))}F_k(\alpha,C_0) = \mathpzc{O}(-;k)$, where the colimit is using the poset-structure on $\mathpzc{K}^n(k)$ -- given in \ref{posetstructure} -- and inclusions of subsets in $S(\Ob(\mathpzc{O}))$. These inclusions should be compatible with the equation, in the sense that $\mathpzc{O}(U;k)$ should be given by restricting the colimit to the $C_0 \in \Ob(S(\mathpzc{O}))$ satisfying $C_0 \subseteq U$ in the indexing category.
\end{itemize}
%Any two coloured $E_n$-operads with homotopy equivalent colours are homotopy equivalent as operads. 
\end{theorem}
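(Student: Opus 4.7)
The plan is to reduce this coloured statement to Berger's monochrome cellularization theorem \cite[Th. 1.16]{BergerCellular}, which asserts that the nerve of the poset-operad $\mathpzc{K}^n$, call it $|\mathpzc{K}^n|$, is a monochrome $E_n$-operad. The strategy is to construct a zig-zag of weak equivalences of coloured topological operads between $\mathpzc{O}$ and the trivial $\Ob(\mathpzc{O})$-coloured extension $|\mathpzc{K}^n| \times \Ob(\mathpzc{O})$ of example \ref{colouren}; together with \ref{enoperaddefine}, this produces the desired conclusion.

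Fixing $k \in \NN$, I would first analyze the arity-$k$ term fibrewise over $C_0 \in S(\Ob(\mathpzc{O}))$. Restricting $F_k$ gives a functor $F_k(-, C_0) \colon \mathpzc{K}^n(k) \to S(\mathpzc{O})(-;k)$ whose latching maps are cofibrations by condition (A); this is the standard Reedy-type input that promotes the pointwise colimit to a homotopy colimit, as in the proof of the monochrome \cite[Th. 1.16]{BergerCellular}. Condition (B) then supplies natural weak equivalences $F_k(\alpha, C_0) \simeq C_0$, so this homotopy colimit collapses---by the standard Bousfield--Kan formula for a diagram of spaces weakly equivalent to a constant one---to $|\mathpzc{K}^n(k)| \times C_0$ via the projection to the classifying space of the poset.

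Next I would assemble these fibrewise pictures over the $C_0$-variable. Because the equivalences of (B) are natural with respect to inclusions $B \hookrightarrow C$ inside $\Ob(\mathpzc{O})$, passing to the colimit over $S(\Ob(\mathpzc{O}))$ supplied by condition (C) yields $\mathpzc{O}(-;k) \simeq |\mathpzc{K}^n(k)| \times \Ob(\mathpzc{O})$. Because $F_k$ is a morphism of coloured operads, not merely a family of poset maps indexed by $k$, the fibrewise equivalences are compatible with the $\circ_i$-composition of \ref{topologicaloperad}, and therefore assemble into a zig-zag of morphisms of coloured topological operads between $\mathpzc{O}$ and $|\mathpzc{K}^n| \times \Ob(\mathpzc{O})$. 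Since $|\mathpzc{K}^n|$ is a monochrome $E_n$-operad, example \ref{colouren} identifies the target as a coloured $E_n$-operad in the sense of \ref{enoperaddefine}, completing the proof.

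The main obstacle I expect lies in making the operadic compatibility of this zig-zag precise. The poset structure on $\mathpzc{K}^n(k)$ is defined via the $\gamma_{ij}$-projections of \ref{posetstructure}, whose compatibility with $\circ_i$ is a combinatorial verification that must be mirrored on the topological side under $F_k$. The latching-cofibration condition (A) is pivotal here, both for ensuring that the pointwise colimit of (C) actually computes the correct homotopy type, and for allowing the operadic pullback in definition \ref{topologicaloperad} of a coloured topological operad to commute with the colimit in the cellular filtration. If either of these compatibilities broke, one would only obtain a levelwise weak equivalence of spaces, rather than an operadic zig-zag capable of transferring the coloured $E_n$-structure from $|\mathpzc{K}^n| \times \Ob(\mathpzc{O})$ back to $\mathpzc{O}$.
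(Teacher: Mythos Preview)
Your proposal is correct and follows essentially the same route as the paper: use condition (A) together with the Reedy structure on the finite poset $\mathpzc{K}^n(k)$ to replace the colimit over $\alpha$ by a homotopy colimit (the paper cites \cite[13.4]{DuggerPrimer} and \cite[15.2.1]{HirschhornModelCat} here rather than Bousfield--Kan directly), use (B) to identify each $F_k(\alpha,C_0)$ with the constant diagram at $C_0$ so that the homotopy colimit computes $|\mathcal{N}(\mathpzc{K}^n(k))| \times C_0$, assemble over $C_0$ via (C) and the naturality in (B) to reach $|\mathcal{N}(\mathpzc{K}^n)| \times \Ob(\mathpzc{O})$, and finally invoke that $F$ is a morphism of coloured operads for operadic compatibility. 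Your closing paragraph on the operadic compatibility is more cautious than the paper, which simply records that $F$ being an operad map lets one check the zig-zag is operadic without further detail.
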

\begin{proof}
%From these assumptions we make identifications, inspired by the monochrome setting, found in \cite[1.7]{BergerCellular}. 

First of all, note that $\mathpzc{K}^n(k)$ as a finite poset is a Reedy Category in the sense of \cite[13.1]{DuggerPrimer}; explicitly a degree function $\deg \colon \mathpzc{K}^n(k) \to \ZZ$ can be given by letting $\deg(\alpha)$ be determined by the sum of the $k \choose 2$ labels in $\{0,\ldots,n-1\}$ of the edges of the graph $\alpha$. 

%First of all, note that (A) ensures that to $C_0 \in S(\Ob(\mathpzc{O}))$ there is only one non-empty $F_k(\alpha,C_0,C_1,\ldots,C_k)$, so $F_k$ defines a functor $\tilde{F_k} \colon \mathpzc{K}^n(k) \times S(\Ob(\mathpzc{O})) \to S(\mathpzc{O})(-;k)$ by letting $\tilde{F_k}(\alpha,C_0) := F_k(\alpha,C_0,C_1,\ldots,C_k)$ be the uniquely defined nonempty subset.

%Since we are taking a colimit over a cartesian product, we can split the colimit up into a double colimit
%\begin{eqnarray*}
%\colim_{(\alpha,C_0,\ldots,C_k) \in \mathpzc{K}^n(k) \boxtimes S(\mathpzc{O})} F_k(\alpha,C_0,\ldots,C_k) \cong 
%\colim_{(\alpha,C_0) \in \mathpzc{K}^n(k) \times S(\Ob(\mathpzc{O}))} F_k(\alpha,C_0) \cong \colim_{C_0 \in S(\mathpzc{O})} \colim_{\alpha \in \mathpzc{K}^n(k)} \tilde{F_k}(\alpha,C_0)
%\end{eqnarray*}

From the assumptions (A)-(C) we get the following homotopy equivalence: 

%; for brevity, let $\mathcal{C} := \Ob(\mathpzc{O})$:%, we let $\underline{C} = (C_0,\ldots,C_k)$:
\begin{eqnarray*}
\mathpzc{O}(-;k) = \colim_{(\alpha,C_0) \in \mathpzc{K}^n(k) \times S(\Ob(\mathpzc{O}))} F_k(\alpha,C_0) &\cong&\\ \colim_{C_0 \in S(\Ob(\mathpzc{O}))}\colim_{\alpha \in \mathpzc{K}^n(k)}F_k(\alpha,C_0) &\simeq&\\
 \colim_{C_0 \in S(\Ob(\mathpzc{O}))}(\hocolim_{\alpha \in \mathpzc{K}^n(k)}* \times C_0) &\cong&\\ \colim_{C_0 \in S(\Ob(\mathpzc{O}))}(|\mathcal{N}(\mathpzc{K}^n)| \times C_0)(k) &\cong&\\ (|\mathcal{N}(\mathpzc{K}^n)| \times \Ob(\mathpzc{O}))(k) \end{eqnarray*}
The first identification is given in assumption (C), which in turn splits out into a double colimit. To obtain the homotopy equivalence: Since $\mathpzc{K}^n(k)$ is a Reedy Category, the assumption (A) allows us to apply \cite[13.4]{DuggerPrimer}, or in published format: \cite[15.2.1]{HirschhornModelCat}, giving a homotopy equivalence $\hocolim_{\alpha \in \mathpzc{K}^n(k)} F_k(\alpha,C_0) \to \colim_{\alpha \in \mathpzc{K}^n(k)} F_k(\alpha,C_0)$ for fixed incoming colours $C_0 \in S(\Ob(\mathpzc{O}))$. From (B) we get a homotopical identification of $F_k(\alpha,C_0)$ with $C_0$ and this computes the homotopy colimit, geometrically realizing the nerve of the full graph operad, along with a cartesian product of the colours $C_0 \in S(\Ob(\mathpzc{O}))$ -- independent of $\alpha \in \mathpzc{K}^n(k)$. The final identification follows since the naturality of (B) supplies us with a cartesian product of the nerve along with an actual direct limit of all inclusions of $S(\Ob(\mathpzc{O}))$ which can be identified with the final target of the inclusions, $\Ob(\mathpzc{O})$.%The outer colimit is under this homotopy equivalence hence taking constant morphisms yielding the final cartesian product with $\Ob(\mathpzc{O})$.

%Noting that the empty $F_k(\alpha,\underline{C})$ and their morphisms don't get to contribute anything, the first bullet we only have to take a product with $\Ob(\mathpzc{O})$, coming from the union of the inputs of $S(\mathpzc{O})$. That is to say that by the first and second bullet in the list of assumptions, from $F_k$, we can define a functor $\tilde{F}_k \colon \mathpzc{K}^n(k) \times S(\Ob(\mathpzc{O})) \to S(\mathpzc{O})(-;k)$, by letting $\tilde{F}_k(\alpha,C_0) = F_k(\alpha,C_0,\ldots,C_k)$. Since most objects in the colimit are empty, the first and second bullet tells us that we have an identification $$\colim_{(\alpha,\underline{C}) \in \mathpzc{K}^n(k) \boxtimes S(\Ob(\mathpzc{O}))} F_k(\alpha,\underline{C}) \cong \colim_{(\alpha,C_0) \in \mathpzc{K}^n(k) \times S(\Ob(\mathpzc{O}))} \tilde{F}_k(\alpha,C_0).$$

%From this the contractibility condition of the second bullet can be directly applied.

One now utilizes $F$ as a morphism between coloured operads to check that this gives an operadic weak equivalence $\mathpzc{O} \simeq |\mathpzc{N}(\mathpzc{K}^n)| \times \Ob(\mathpzc{O})$.%\fxnote{do we actually need to add assumptions to (D) for this to hold true?}

%Since $\Ob(\mathpzc{O})$ is contractible, the inclusion functor $|\mathpzc{N}(\mathpzc{K}^n(k)| \times \Ob(\mathpzc{O}) \to |\mathpzc{N}(\mathpzc{K}^n(k)| \boxtimes \Ob(\mathpzc{O})$. 

%We can further define an $E_n$-mapping $I_k \colon \mathpzc{K}^n(k) \boxtimes S(\Ob(\mathpzc{O})) \to S(\mathpzc{K}^n \boxtimes \Ob(\mathpzc{O}))(-;k)$ by $I_k(G,C_0,\ldots,C_k) = (\{G\},C_0,\ldots,C_k)$. This satisfies the conditions in the theorem, so all $\Ob(\mathpzc{O})$-coloured $E_n$-operads are equivalent via a zig-zag of operadic functors to this operad.
\end{proof}

\begin{definition}
Let $\mathpzc{O}$ be a coloured topological operad. We call $F \colon \mathpzc{K}^n \times S(\Ob(\mathpzc{O})) \to S(\mathpzc{O})$ an \emph{$E_{n}$-functor} if it satisfies the conditions of \ref{cellulite}.
\end{definition}

\begin{remark}\label{knowen}
In order to get an equivalence back to something known, let $\Disk_{n+1}$ denote the little disk operad. We hereby have that the coloured operad $\Disk_{n+1} \times \Ob(\Cleav_{S^n})$ is a coloured $E_{n+1}$-operad in the sense of \ref{enoperaddefine}, coloured over the same objects as $\Cleav_{S^n}$.

%Let $\Ob(\mathpzc{O})$ be any space. By the cellularization models \cite{BergerCellular}, for instance $\Disk_n \boxtimes \Ob(\mathpzc{O})$ is a coloured $E_n$-operad, simply extending an $E_n$-functor for $\Disk_n$ pointwise along the $\Ob(\mathpzc{O})$-component. In particular, $\Disk_n \%, by mapping $U \in S(\Ob(\mathpzc{O})) \mapsto \bigcup_{u \in U} u$.
\end{remark}

\subsection{An $E_{n+1}$-functor for $\Cleav_{S^n}$}

%\begin{remark}

%\end{remark}

\begin{construction}\label{basetrees}
We shall provide the combinatorial data, giving the link between the full graph operad, and the spherical cleavage operads.

For each $n \in \NN$, let $I = [-1,1] \hookrightarrow \RR^{n+1}$ denote the interval as sitting inside the first coordinate axis of $\RR^{n+1}$. Choosing $k-1$ distinct points $x_1,\ldots,x_{k-1} \in I$ specifies a partition of $I$ into $k$ intervals $X_1=[-1,x_1],X_2=[x_1,x_2],\ldots,X_k=[x_{k-1},1]$. We endow the collection of these intervals with an ordering, determined by $\sigma \in \Sigma_k$, ordering them as $X_{\sigma(1)},\ldots,X_{\sigma(k)}$. 

Parametrize $S^n := \{\underline{s} = (s_1,\ldots,s_{n+1}) \in \RR^{n+1} \mid \|\underline{s}\| = 1\}$ to consider the map $\eta \colon S^n \to I$ given by $\eta(s_1,\ldots,s_{n+1}) = s_1$. Any subinterval $X_i \subseteq I$ defines timber $\tilde{X_i}$ of $S^n$, by $\eta^{-1}(X_i)$.

Positioning hyperplanes $P_1,\ldots,P_{k-1}$ orthogonal to $I$ -- such that $P_i$ contains the point $(x_i,0,\ldots,0)$, as decorations on a cleaving tree we can choose their normal-vector of $P_i$ to point towards $(1,0,\ldots,0)$ and get colours that are labelled by $1$ to $k$ from left to right along the first coordinate axis. 

Under the chop-equivalence of \ref{NCleave}, we can always choose a representing cleaving tree with two leaves labelled $i$ and $i+1$ directly above an internal vertex for any $i \in \{1,\ldots,k-1\}$. For this particular representative of a cleaving tree with the particular orientations of $P_1,\ldots,P_{k-1}$, applying the transposition between $i$ and $i+1$ corresponds exactly to inverting the orientation of the decoration at the vertex below the two leafs.

Since $\Sigma_k$ is generated by these transpositions, we can permute the labelling of the colours by $\sigma \in \Sigma_k$, and hereby obtain $[T_\sigma,\underline{P_\sigma}] \in \Cleav_{S^n}(Uk)$, with $U \in \Ob(\Cleav_{S^n})$ and outgoing colours decorated by $\sigma(1),\ldots,\sigma(k)$ from left to right along the first coordinate axis of $\RR^{n+1}$. %Picture A of Figure \ref{Psigmapic} gives an example of $P_\sigma$ cleaving $S^1$.

%To $\sigma \in \Sigma_k$, we shall explicitly describe designated elements $[T_\sigma,\underline{P}_{\sigma}]$ of $\Cleac_{S^n}(S^n;k)$ with the $i$'th timber given by $\tilde{X_{\sigma(i)}}$.

%Pick the $k-1$ hyperplanes $P_1,\ldots,P_{k-1} \in \Hyp^{n+1}$ that are normal to $I$, with $x_i \in P_i$ for all $i \in \{1,\ldots,k-1\}$ and with normal-vector pointing from $0 \in I$ towards $1 \in I$.

%Aspiring towards a recursive description, consider $\sigma \in \Sigma_k$ as $\sigma = R_{\sigma(k)}F_k(\sigma)$, where $F_k \colon \Sigma_k \to \Sigma_{k-1}$ is the homomorphism that forgets $k$ in $\{1,\ldots,k\}$ and $R_i \colon \Sigma_{k-1} \to \Sigma_k$ is the section that from $\sigma' \in \Sigma_{k-1}$ inserts $k$ at position $j$; explicitly the bijection of $\{1,\ldots,k\}$ specified by $R_i(\sigma')(j) = \sigma'(j)$ for $j < i$ and $R_i(\sigma')(j) = \sigma'(j) + 1$ for $j > i$.

%Assume that there is an element $(L_{k-1},\underline{Q}$ where $\underline{Q}$ denotes an ordering of $P_1,\ldots,P_{k-2}$, and such that the timber of $(L_{k-1},\underline{Q})$ under $\eta$ maps to a partition of $I$, ordered by $F_k(\sigma)$.

%It is easy to check, using $\sigma = R_{\sigma(k)}F_k(\sigma)$ that under operadic composition, the element 
%$$[T_\sigma,\underline{P}_\sigma] := [(L_{k-1},\underline{Q} \circ_{\sigma(k)} (L_1,-P_k)] \in \Cleav_{S^n}(S^n;k)$$
%will have $\eta$ mapping the timber on the $i$'th leaf to $X_{\sigma(i)}$. Using \ref{paralb}, we can take $[T_\sigma,\underline{P}_\sigma]$ represented by a left-blown tree, so the recursion follows.
\end{construction}

\begin{definition}\label{famhyp}
For a given $\sigma \in \Sigma_k$, the collection of all $[T_\sigma,\underline{P}_\sigma]$ as given above specifies an element of the subsets of $\Cleav_{S^{n}}(-;k)$, where for $U \in \Ob(\Cleav_{S^n})$ this involves a restriction to the $U$-cleaving trees $(T_{\sigma},\underline{P}_{\sigma})$. %Denote the subset of these hyperplanes by $\underline{\mathfrak{P}}_\sigma(U) \in S\left(\Hyp^{n+1}\right)^{k-1}$ 

Note that to $U \in \Ob(\Cleav_{S^n})$ choosing the hyperplanes $\underline{P}_{\sigma}$ to be in equidistant position from each other, and the closest hyperplanes that no longer cleave $U$, defines an embedding of $U \hookrightarrow \Cleav_{S^n}(U;k)$ for each $\sigma \in \Sigma_k$.
\end{definition}

\begin{observation}\label{intervaliscleaving}
We define $J_U \subseteq [-1,1]$ as sitting inside the first coordinate axis of $\RR^{n+1}$ as the subspace where any $U$-cleaving $(T_{\sigma},\underline{P}_{\sigma})$ have decorating hyperplanes contain points of $J_U$.  

We shall for the sake of this section allow ourselves to assume that $J_U \subseteq [-1,1]$ is a non-empty subinterval; formally, this can be done by redefining $\Cleav_{S^n}$ as a full suboperad of $\Cleav_{S^n}$ given by restricting $\Ob(\Cleav_{S^n})$ to the timber $U$ for which $J_U$ is $U$-cleaving.

Similar to \ref{contractibletimber}, one can define a homotopy that pushes the hyperplanes defining $U$ towards tangent-hyperplanes of $S^n$ to show that this restriction defines a deformation retraction of the objects, and hence makes the inclusion a weak equivalence of operads.

%given by restricting the subinterval $[-1,1]$ such that a hyperplane $P \in J_U$ orthogonal 
\end{observation}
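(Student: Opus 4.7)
The plan is to mimic the contraction in Proposition \ref{contractibletimber}, but to halt it precisely at the first moment when $J_U$ becomes a non-empty $U$-cleaving subinterval of $[-1,1]$. This yields a deformation retraction of $\Ob(\Cleav_{S^n})$ onto the subspace $\Ob' := \{U : J_U$ is a non-empty $U$-cleaving subinterval$\}$. The object-level deformation will then extend continuously and operadically to each arity space, which together with the fibration $\ev_{\inc}$ of \ref{evincfibration} gives the weak equivalence of coloured topological operads.

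First I would construct the object-level homotopy. Writing $U \in \Ob(\Cleav_{S^n})$ as the timber at the top leaf of a left-blown representative $(L_k, \underline{P})$ with outward-pointing normals $\nu_1, \ldots, \nu_k$ (as in \ref{leftblown}), let $U_t$ be the timber obtained by replacing each decoration $P_i$ with its translate $P_i + t\nu_i$, discarding via \ref{killleaf} any hyperplane that ceases to cleave $S^n$. By compactness of $S^n$, as in \ref{contractibletimber}, in finite time $U_t = S^n$, for which $J_{S^n} = [-1,1]$ is cleaving. Letting $A(U) := \{t \geq 0 : J_{U_t}$ is a non-empty $U_t$-cleaving subinterval$\}$, the openness of transversality and non-degeneracy of cleaving makes $A(U)$ open in $[0,\infty)$ and unbounded above, hence non-empty. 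Setting $T(U) := \inf A(U)$ and choosing a small continuous correction $\varepsilon(U) \geq 0$ with $T(U) + \varepsilon(U) \in A(U) \cup \{0\}$, define $H \colon \Ob(\Cleav_{S^n}) \times [0,1] \to \Ob(\Cleav_{S^n})$ by $H(U, s) := U_{s(T(U) + \varepsilon(U))}$. For $U \in \Ob'$ we have $0 \in A(U)$, so $T(U) = \varepsilon(U) = 0$ and $H$ is stationary on $\Ob'$.

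Next I would verify continuity. The topology on $\Timber_{S^n}$ from Section \ref{topologytimber} makes the defining hyperplanes of $U$ vary continuously, so $(U, t) \mapsto U_t$ is continuous into $\Ob(\Cleav_{S^n})$. Upper semi-continuity of $T$ follows from openness of cleaving, and a standard partition-of-unity argument supplies a continuous $\varepsilon$, making $H$ a continuous deformation retraction of $\Ob(\Cleav_{S^n})$ onto $\Ob'$. Preservation of the Sch\"oenflies condition of \ref{schoenflies} along the homotopy is automatic because enlarging the timber can only merge complementary components into wedges of disks, keeping $U_t \in \Ob(\Cleav_{S^n})$.

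Finally I would extend the homotopy to each arity and close the argument. For $[T, \underline{Q}] \in \Cleav_{S^n}(U; k)$, translating the defining hyperplanes of the input $U$ outward strictly enlarges the timber at every node of $T$, so the internal decorations $\underline{Q}$ remain transversally cleaving and produce $[T, \underline{Q}]_t \in \Cleav_{S^n}(U_t; k)$. This gives a compatible deformation retraction at each arity, manifestly respecting $\circ_i$ since only the input colour is altered. The hard part will be controlling $T(U)$ near configurations where several defining hyperplanes of $U$ reach tangency with $S^n$ simultaneously: the arity of the left-blown representative drops discontinuously there, but this discontinuity is absorbed by the topology of \ref{NCleave} because the collapsed leaves correspond to identifications under \ref{killleaf}. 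Combining the arity-wise deformation retractions with the fibration $\ev_{\inc}$ of \ref{evincfibration} then yields the claimed weak equivalence of coloured topological operads.
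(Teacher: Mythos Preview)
Your approach is essentially the same as the paper's: the observation is not given a formal proof there, only the one-sentence indication ``Similar to \ref{contractibletimber}, one can define a homotopy that pushes the hyperplanes defining $U$ towards tangent-hyperplanes of $S^n$\ldots'', and you carry out precisely this idea, translating the bounding hyperplanes of $U$ outward along their normals until $J_U$ becomes cleaving.

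Where you go further than the paper is in (i) introducing an explicit stopping time $T(U)$ rather than running the homotopy all the way to $S^n$, (ii) worrying about continuity of the stopping time and proposing a partition-of-unity fix, and (iii) spelling out the extension to each arity space and the compatibility with $\circ_i$. These are genuine details the paper leaves implicit. One small caution: your stopping-time argument is more delicate than necessary. Since the target subspace $\Ob'$ and the full $\Ob(\Cleav_{S^n})$ are \emph{both} contractible (the latter by \ref{contractibletimber}, the former by the same flow run to completion), the inclusion $\Ob' \hookrightarrow \Ob(\Cleav_{S^n})$ is automatically a weak equivalence on objects; one then only needs the arity-level statement, for which your observation that enlarging $U$ preserves the $U$-cleaving property of any $(T,\underline{Q})$ suffices, together with \ref{evincfibration}. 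This bypasses the semi-continuity and tangency-collision issues you flag as ``the hard part''.
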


\begin{remark}
We find it enlightening to note that we can form a suboperad, the \emph{caterpillar operad} \footnote{collapse the components of the blueprint to a single point, to make a visual link to this name} $\Cater_{S^n}$, of $\Cleav_{S^n}$ by taking the full suboperad under the condition that $[T,\underline{P}] \in \Cater_{S^n}(-;k)$ if $[T,\underline{P}]$ is of the form $[T_\sigma,\underline{P}_{\sigma}]$ as in \ref{famhyp} for some $\sigma \in \Sigma_k$.

The caterpillar operad will control the product structure on $\hh_*(M^{S^n})$, however in order to obtain the higher bracket in the Gerstenhaber Algebra, we shall need more than just parallel hyperplanes -- and engage all the ways hyperplanes can rotate in $\Cleav_{S^n}$, in contrast to the sole translational data of $\Cater_{S^n}$.

Said in a different way, there is an obvious operadic map from $\Cater_{S^n}$ to the little intervals operad, see e.g. \cite[ch. 2]{McClureSmithcosimplicial}, determined by how the hyperplanes of $\Cater_{S^n}$ partition the $x$-axis into intervals. The little intervals operad has the $k$-ary space given as the space of embeddings of $k$ intervals inside $[0,1]$. We can expand these little intervals linearly until they touch each other, and hereby similarly to $\Cater_{S^n}(-;k)$ partitioning $[0,1]$ into $k$ smaller intervals. This provides a map that is a weak equivalence of coloured operads from $\Cater_{S^n}$ to the little intervals operad, considered as a coloured operad with trivial colours. This hence provide the $A_\infty$- or $E_1$-structure on the String Product associated to $M^{S^n}$ for the action in spectra that is constructed in \cite{BargheerPuncture}. The rest of this section is hence devoted to determining the rest of the $E_{n+1}$-structure of $\Cleav_{S^n}$.
\end{remark}

%\begin{remark}\label{conspec}
%Note that instead of $F_k \colon \Sigma_k \to \Sigma_{k-1}$ removing $k$ in $\{1,\ldots,k\}$ and reordering the permutation monotonically, we could have used $F_i$ removing $i$ from $\{1,\ldots,k\}$ and reordering the bijection monotonically. This would give a representative of $[T_\sigma,\underline{P}_\sigma]$ with the $i$'th leaf directly above a top-internal vertex $v$; that is $v$ is a vertex with two leafs directly above it, decorated by $i$ and $j$. Let $(i j) \in \Sigma_k$ denote the transposition of $i$ and $j$, and let $\sigma' := \sigma \circ (i j)$.

%Making a half rotation of the normal vector of $P_j$ -- taking half of a great circle of the parameter $S^n$ in \ref{hyperplanedef} -- specifies a path of cleaving trees from, $[T_{\sigma},\underline{P}_\sigma]$ to $[T_{\sigma'},\underline{P}_{\sigma'}']$ where $P'_{\sigma'}$ is the ordering of hyperplanes differring from $\underline{P}$ by exchanging $P_j$ with $-P_j$, thus giving rize to a composition with $(i j) \in \Sigma_k$.

%Each such path can be taken to be cleaving, since $P_j$ is on a top-internal vertex. Since $\Sigma_k$ is generated by transpotions of neighbor-elements, we hereby have specified a path from $[T_\sigma,\underline{P}]$ to any other element $[T_{\sigma'},\underline{P}'_{\sigma'}]$.   
%\end{remark}

In order to engage the combinatorics of this rotational data, i.e. define a $E_{n+1}$-functor, we shall prescribe explicit transformations of the hemispheres parametrizing the hyperplanes involved in the cleavages.% -- these will determine hyperplanes moving along great circles. %It appears most slick to describe these transformations using the orthogonal group $\SO(n+1)$; however this does not mean that $\SO(n+1)$ has an effect on the homotopy type of $\Cleav_{S^n}$. The basic observation that gives rise to this un-appearence of $\SO(n+1)$ is that the hyperplanes we work with are determined by their normal-vector and two different coordinate-frames on a given hyperplane participating in $[T,\underline{P}] \in \Cleav_{S^n}(-;k)$ will give rise to the same cleavage. 

%In order to make this construction, we shall first give a lemma that utilises the fact that the embedding of $S^n$ into $\RR^{n+1}$ as a unit-sphere comes with the convex recording area $D^{n+1}$.

%\begin{definition}
%For $(r,v) \in \RR_+ \times \RR^{n+1}$ with $\RR_+ := ]0,\infty[$, let $S^n_{(r,v)} := \{x \in \RR^{n+1} \mid \|x - v\| = r\}$ be the sphere of radius $r$ centered around $v$. We use the notational convention that $S^n = S^n_{(1,0)}$.

%We also use the convention $\RR_+ := ]0,\infty[ \subseteq \RR$.

%\end{definition}

\begin{definition}\label{simplePhi}
We prescribe a function $\kappa \colon S^n \times \RR \to \Hyp^{n+1}$, in accordance with \ref{hyperplanedef} %That is, we want to associate a hyperplanes to each element of $S^n \times \RR^{n+1}$ prescribing a normal vector and a translational vector.
by letting $\kappa(s,t)$ be given as the oriented hyperplane that has $s$ as a normal vector, and has been translated by $t$ -- to contain the point $s\cdot t\in \RR{n+1}$.

%To $s \in S^n_{(r,v)}$, let $P_s\in\Hyp^{n+1}$ denote the oriented hyperplane that is tangent to $s \in S^n_{(r,v)}$. %with normal-vector pointing away from $S^n_{(r,v)}$ and $\{P_s^-\} \subseteq \Hyp^{n+1}$ the ones with tangent vector pointing towards $S^n_{(r,v)}$.
%We define $\kappa(s,t,r) = P_{rs +t} \in \Hyp^{n+1}$.
\end{definition}

\begin{definition}\label{hemispheres}
In the following, when referring to a sphere $S^i$, we shall generally consider it as sitting inside a string of inclusions 
\begin{eqnarray}\label{equatorinclusions}
\xymatrix{
S^0 \ar@{^(->}[r]^{\iota^0} &  S^1 \ar@{^(->}[r]^{\iota^1}& \cdots  \ar@{^(->}[r]^{\iota^{n-1}} &S^{n-1} \ar@{^(->}[r]^{\iota^{n-1}} & S^{n}
}
\end{eqnarray}
where all are subsets of $\RR^{n+1}$, and $S^{i-1}$ is embedded equatorially into the first $i$ coordinates of $S^i \subset \RR^{i+1}$.%, such that $\iota_i = \iota^l \circ \cdots \circ \iota^{i+1}\circ \iota^i$ embeds into the first $i+1$ coordinates of $S^{n}$.

Let $S^i_+ = \{(x_0,\ldots,x_{n}) \in S^n \mid x_i \geq 0, x_{i+1} = \cdots = x_{n-1} = 0\}$ and $S^i_- = \{(x_0,\ldots,x_{n}) \in \RR^{n+1} \mid x_i \leq 0,x_{i+1} = \cdots = x_{n-1} = 0\}$. 
Restrictions of $\iota_i$ in (\ref{equatorinclusions}), yields the partially ordered set of inclusions:

\begin{eqnarray}\label{hemisphereinc}
\xymatrix{
S^0_+ \ar@{^(->}[r]\ar@{^(->}[dr] &  S^1_+ \ar@{^(->}[r]\ar@{^(->}[dr] & \cdots  \ar@{^(->}[r] \ar@{^(->}[dr]&S^{n-1}_+ \ar@{^(->}[r] \ar@{^(->}[dr]& S^{n}_+\\
S^0_- \ar@{^(->}[r]\ar@{^(->}[ur] &  S^1_- \ar@{^(->}[r] \ar@{^(->}[ur]& \cdots  \ar@{^(->}[r] \ar@{^(->}[ur]&S^{n-1}_- \ar@{^(->}[r] \ar@{^(->}[ur]& S^{n}_-}.
\end{eqnarray}
Since these are all inclusions of closed lower-dimensional submanifolds, it is a partially ordered set of cofibrations. 

%Any edge $e_{ij}$ of a graph $G \in \mathpzc{K}^n(k)$ is uniquely determined as the edge attached to the vertices labelled by some ordered pair $i<j \in \mathbf{n}$. Let $\gamma_G(i,j) \in \ZZ_2 =: \{'+','-'\}$ be given by $\gamma_G(i,j) = '-'$ if $e_{ij}$ points from $i$ to $j$ and $\gamma_G(i,j) = '+'$ if $e_{ij}$ points from $j$ to $i$.

%Denote by $\lambda_G(i,j) \in \mathbf{n}$ the labelling of the edge $e_{ij}$. %These maps are also given under the identification $\mathpzc{K}^n \cong \mathpzc{K}^n$.
\end{definition}

\begin{definition}\label{stringindex}
%For $i,j \in \{1,\ldots,k-1\}$, let $|i,j| := \{i,\ldots,j\}$, where $|i,j| = \emptyset$ if $j \geq i$. 
Consider the partially ordered set $I_{k}$ with objects $\alpha_j \subseteq \{1,\ldots,k\}$, where $j$ indicates that $\alpha_j$ is of cardinality $j$, and morphisms generated by the opposite arrows of \emph{simple inclusions} $\alpha_{j-1} \to \alpha_j$.

To $f \in I_{k}$, where $f \colon \alpha_j \to \alpha_p$ let the domain be denoted by $D(f):= \alpha_j$ denote the domain of $f$, and the target $T(f) := \alpha_p$. A simple inclusion $\iota_l$ defines a \emph{lost number} $j_{\iota_l} := T(\iota_l) \setminus D(\iota_l) \in \{1,\ldots,k\}$. 

%An \emph{end-point} of $I_{k-1}$ is a point of the form $|i,i|$, and the initial point is $|1,k-1|$. 
An \emph{$i$-string} of morphisms in $I_{k}$ is given by a sequence $\underline{\iota} = (\iota_1,\ldots,\iota_{k-1})$ of opposite arrows of simple inclusions such that $D(\iota_r)  = T(\iota_{r+1})$ -- and with $D(\iota_1) = \{1,\ldots,k\}$ and $T(\iota_{k-1}) = \{i\}$. 

Let $I_{k}|_i$ denote the set of $i$-strings of $I_{k}$.

%Given $\sigma \in \Sigma_n$, we let $|i,j|^{\sigma}$ denote the ordered set $\{\sigma^{-1}(i),\ldots,\sigma^{-1}(j)\}$, and define \emph{$\sigma$-twisted} versions of the above $I_n^{\sigma}$ as well as $I_n^{\sigma}|_i$, replacing $|i,j|$ with $|i,j|^{\sigma}$. We let a $\sigma$-twisted $i$-string be one that has $T(\iota_{n-1}) = |\sigma^{-1}(i),\sigma^{-1}(i)|$.
\end{definition}

\begin{remark}\label{stringpermute}
While we shall mainly find it convenient to use the notation of \ref{stringindex}, note that the data of $\underline{\iota} \in I_{k}|_i$ exactly corresponds to a permutation of $\{1,\ldots,k\}$, where we to $i$ assign the lost number of $\iota_i$.

We shall thus allow ourselves to consider $\underline{\iota}$ as an element of $\Sigma_k$ where we here use the notation $\underline{\iota}(j) \in \{1,\ldots,k\}$ to indicate the value of $j$ under the permutation.
\end{remark}

\begin{construction}\label{techfunc}
We conglomerate the above constructions into a specific recursively defined function that provide the technical core in the definition of the $E_n$-functor.

For each $i \in \{1,\ldots,k-1\}$, we want to define a function

$$
\Theta_U \colon \Sigma_k \times I_{k-1}|_i \times \left(R_U \cap \left(S^n \times \RR\right)^{k-1}\right) \to \Cleav_{S^n}(U;k).
$$

Where $R_U \subseteq \left(S^n \times \RR\right)^{k-1}$, amounts to a corestriction of each $(S^n \times \RR)$-factor that will be specified below.

%Each $\iota_i$ will have a specific $i \in \{1,\ldots,k-1\}$ that is contained in the domain but not the target.

%We define $\Theta_U(\sigma,\underline{\iota},((s_1,r_1),\ldots,(s_{k-1},r_{k-1})))$ in $k-1$ steps.

Each $\iota_i$ will specify a hyperplane via its lost number, \ref{stringindex}, as $P_{j_{\iota_{i}}}$ of $\underline{P}_{\sigma}$. 

We shall produce a $U$-cleaving tree that is decorated by the hyperplanes $$\kappa(s_1,r_1), \ldots,\kappa(s_{k-1},r_{k-1}).$$

In order to specify the $U$-cleaving tree that these hyperplanes decorate, we utilize the ordering from left to right along the $x$-axis of the hyperplanes of $\underline{P}_{\sigma}$, specified in \ref{basetrees}, as well as the $i$-string $\underline{\iota}$. 

We build this tree recursively, and start by positioning $\kappa(s_1,r_1)$ as the decoration of a $2$-ary tree $T_2$. We hereby restrict $\Theta_U$ by letting the first $\left(S^n \times \RR\right)$-factor of $R_U$ be such that $(T_2,\kappa(s_1,r_1))$ is $U$-cleaving.

In the recursive step, assume that we have defined the first $l-1$ factors of $R_U$ and that we are given an $l$-ary $U$-cleaving tree $(T_l,\kappa(s_1,r_1),\ldots,\kappa(s_{l-1},r_{l-1}))$ such that taking $l-1$ hyperplanes of $\underline{P}_{\sigma}$, $P_{j_{\iota_1}},\ldots,P_{j_{\iota_{l-1}}}$, and assigning $P_{j_{\iota_i}}$ to replace the decoration $\kappa(s_i,r_i)$, also yields a $U$-cleaving tree.

The hyperplane $P_{j_{\iota_l}}$ cleaves timber associated to a specific leaf of the decorated tree $(T_l,P_{j_{\iota_1}},\ldots,P_{j_{\iota_{l-1}}})$. We graft a $2$-ary tree onto $T_l$ at this leaf, to obtain the $(l+1)$-ary $T_{l+1}$. We let $\kappa(s_{l},r_l)$ be the decoration at the new internal vertex of $T_{l+1}$, where we define the $l$'th factor of $R_U$ by requiring that $(s_l,r_l) \in S^n \times \RR$ makes the decorated $(T_{l+1},\kappa(s_1,r_1),\ldots,\kappa(s_l,r_l))$ $U$-cleaving. The timber at the leaves of the decorated tree $(T_{l+1},\kappa(s_1,r_1),\ldots,\kappa(s_l,r_l))$ are induced by the timber at the same leafs of $(T_{l+1},P_{j_{\iota_1}},\ldots,P_{j_{\iota_{l}}})$.

\end{construction}

Any edge $e_{ij}$ of a graph $G \in \mathpzc{K}^n(k)$ is uniquely determined as the edge attached to the vertices labelled by some ordered pair $i<j \in \{1,\ldots,k\}$. Let $\omega_G(i,j) \in \ZZ_2 =: \{'+','-'\}$ be given by $\omega_G(i,j) = '+'$ if $e_{ij}$ points from $i$ to $j$ and $\omega_G(i,j) = '-'$ if $e_{ij}$ points from $j$ to $i$.

Denote by $\lambda_G(i,j) \in \{0,\ldots,n-1\}$ the labelling of the edge $e_{ij}$. %These maps are also given under the identification $\mathpzc{K}^n \cong \mathpzc{K}^n$.
%\end{definition}

\begin{construction}\label{enfunctorconstruction}
We shall construct an $E_{n+1}$-functor for $\Cleav_{S^{n}}$. That is, we are after a functor $D \colon \mathpzc{K}^{n+1} \times S(\Ob(\Cleav_{S^{n}})) \to S(\Cleav_{S^{n}})$.% and then cut down $S(\Ob(\Cleav_{S^n}))$ in the domain of $D_k$ to obtain a more manageable subfamily, called $\mathfrak{E}$, playing the role of $\mathfrak{C}$ in \ref{cellulite}.

Let $G \in \mathpzc{K}^{n+1}(k)$ be a graph with underlying permutation given by $\sigma_G \in \Sigma_{k}$%, and let $\underline{A} = (A_0,\ldots,A_k) \in S(\Ob(\Cleav_{S^n}))^{k+1}$.

%In \ref{basetrees} we have an associated $\underline{P}_{\sigma_G}(U) \in \parHyp^{n+1}(k|1,k|)$. For notational simplicity, note that we can assume that $\underline{P}_{\sigma_G}(U)$ is ordered consecutively such that $P_i$ is left of $P_{i-1}$ and right of $P_{i+1}$ -- irregardless of what $\sigma(j)$ is labelling the timber to the left and right of $P_i$.

%Taking the input element $A_0$ of $\underline{A}$, we have specified restrictions on $\Phi_{\underline{\iota}}^{\underline{\mathfrak{P}_{\sigma_G}}}$, so that for some $a_0 \in A_0$ the image should consist of a cleaving element, and centers of rotation of parallel hyperplanes should only happen within the timber the parallel hyperplanes are residing in. 

Let a $i$-string $\underline{\iota} \in I_{k-1}|_i$ be given, and let $l \in \{1,\ldots,k-1\}$. We consider the two lost numbers $j_{\iota_l}$ and $j_{\iota_{l-1}}$ in the sense of \ref{stringindex}. To account for the case $l=1$, we let $j_{\iota_{0}} := k$. Denote by $\iota_{l_{\max}} := \max\{j_{\iota_l},j_{\iota_{l-1}}\}$ and $\iota_{l_{\min}} := \min\{j_{\iota_l},j_{\iota_{l-1}}\}$. 

%Denote by $R_U(\iota_{l})$ the restriction of the subspace $S^{\lambda_G(\iota_{l_{\min}},\iota_{l_{\max}})}_{\omega_G(\iota_{l_{\min}},\iota_{l_{\max}})} \times \RR^{n+1}$ considered as the $i$'th input to $\Theta_U$, i.e. specified by the space $R_U$ of \ref{techfunc}. Here, the hemispheres $S^{\lambda_G(\iota_{l_{\min}},\iota_{l_{\max}})}_{\omega_G(\iota_{l_{\min}},\iota_{l_{\max}})}$ are given in the diagram (\ref{hemisphereinc}).%_{\underline{\iota}}^{\mathfrak{\underline{P}(U)}_{\sigma_G}}$%, and followingly let $R(T(\iota_i))$ denote the similar restriction of $S^{\lambda_G(T(\iota_{i-1}))}_{\omega_G(T(\iota_{i-1}))} \times \RR^{n+1} \times \RR_+$ as the $i$'th input for $i \in \{2,\ldots,k-1\}$. 

We define the $k$'th operadic constituent of $D$ as:

$$
D_{k}(G,A_0) = 
$$
\begin{eqnarray}\label{DDefine}
\bigcup_{a_0 \in A_0}\bigcup_{i \in \{1,\ldots,k-1\}}\bigcup_{\underline{\iota} \in I_{k-1}|_i}\Theta_{a_0}\left(\sigma_G,\underline{\iota},\left(R_{a_0}(\iota_1),\ldots,R_{a_0}(\iota_{k-1})\right)\right),
\end{eqnarray}

where we to $\iota \in I_{k-1}|_i$ let the spaces $R_U(\iota_{l})$ be a further restriction of the spaces $S^{\lambda_G(\iota_{l_{\min}},\iota_{l_{\max}})}_{\omega_G(\iota_{l_{\min}},\iota_{l_{\max}})} \times \RR$ considered as the $i$'th input to $\Theta_U$, that is, a restriction of the space $R_U$ of \ref{techfunc}. 

The further restriction is given by restricting to the pathcomponent of $[T_{\sigma_G},P_{\sigma_G}]$%specifying that we exclude all $(s,r) \in S^{\lambda_G(\iota_{l_{\min}},\iota_{l_{\max}})}_{\omega_G(\iota_{l_{\min}},\iota_{l_{\max}})} \times \RR^{n+1}$ that does not give rise to an element in the same path-component as the one $[T_{\sigma_G},P_{\sigma_G}]$ lies in
%\begin{itemize}
%\item[($\star$)]  

Here, the hemispheres $S^{\lambda_G(\iota_{l_{\min}},\iota_{l_{\max}})}_{\omega_G(\iota_{l_{\min}},\iota_{l_{\max}})}$ are given in the diagram (\ref{hemisphereinc}).%_{\underline{\iota}}^{\mathfrak{\underline{P}(U)}_{\sigma_G}}$%, and followingly let $R(T(\iota_i))$ denote the similar restriction of $S^{\lambda_G(T(\iota_{i-1}))}_{\omega_G(T(\iota_{i-1}))} \times \RR^{n+1} \times \RR_+$ as the $i$'th input for $i \in \{2,\ldots,k-1\}$. 

%where the bar at the end of each $(S^i_{\epsilon_j} \times \RR_+)$ indicates that we are taking the corestriction of these subspaces as defined in \ref{techfunc}.

%where the corestrcitions on $\Phi_{\underline{\iota}}^{\underline{\mathfrak{P}}_{\sigma_G}(U)}$ is taken such that an decorated tree is in the image whenever the tree cleaves some $a_0 \in A_0$. 

%whenever there is some $a_0 \in A_0 \subseteq \Ob(\Cleav_{S^n})$ 
%Whenever all $a_0 \in A_0$ are in $\Timber_{S^n}$, and there is a $T \in D_k(G,\underline{A})$ with the $i'th$ output given as $a_i \in A_i$. 

%For all other $\underline{A}$, we let $D_k(G,\underline{A}) = \emptyset$. 

%Note that the hemisphere $S^{\lambda_G(|i,j|)}_{\omega_G(|i,j|)}$ is given in the diagram (\ref{hemisphereinc}).

%We let $\mathfrak{U} \subseteq \Ob(\pCleav_{S^n})$ be given as the subsets that are given as $U_k$ in of some $T \in D_k(G,\{S^n\}, U_1,\ldots,U_k)$ for some $k \in \NN$ and $G \in \mathpzc{K}^n(k)$. The morphisms of $\mathfrak{E}$ are the ones that are induced along the images from the inclusions of hemispheres in (\ref{hemisphereinc}). As usual, we shall for brewity write $\underline{U}$ for a subset $\{U_1,\ldots,U_k\} \in \mathfrak{U}^{k}$ describing the colours of the input of $D_k$.

%We from hereon restrict the $E_n$-functor and only consider it as maps $D_k \colon \mathpzc{K}^n(k) \times \mathfrak{E} \to \Cleav_{S^n}(-;k)$.

\end{construction}

%\begin{observation}\label{Dfunctorial}
%Note that $D$ satisfies the diagram of the fourth bullet of \ref{cellulite}. This is most easily seen by considering the leafs of $T \in \Cleav_{S^{n}})(-;k)$ as the vertices of the full-graph $\mathpzc{K}^{n+1}(k)$. The $\circ_i$-composition in $\mathpzc{K}^{n+1}$ is under $D$ the same as grafting trees 
%\end{observation}

\subsection{Proof that Cleavages are $E_{n+1}$}

%With the functor $D$ defined in the previous subsection, we shall check of the conditions (A)- \ref{cellulite} to prove that $\Cleav_{S^{n}}$ is a coloured $E_{n+1}$-operad. 

%Note that for each $(s_1,\ldots,s_{k-1}),(s_1',\ldots,s_{k-1}') \in \left(S^n\right)^{k-1}$, if we assume that $s_j,s_j'$ prescribe an element of the first coordinate of $R(T(\iota_{j-1}))$ then if $s_j \neq s_j'$ for some $j \in \{1,\ldots,k-1\}$ they will prescribe different subsets of $\Cleav_{S^n}(-;k)$ under $D_k(G,A_0)$.

To state the first lemma, note that maps $G \to G' \in \mathpzc{K}^n(k)$ given by raising the index of an edge of $G$ from $i$ to $l$ where $i < l$, will induce injective maps $D(G,A_0) \hookrightarrow D(G',\underline{A})$ given by a restriction of the inclusion $S^i \hookrightarrow S^l$ as one of the coordinates of $R_U(\iota_j)$ under $D_k(G,A_0) \to D_k(G',A_0)$. This describes how $D$ is a functor of posets, and we use the following three lemmas to check the conditions (A)-(C) of \ref{cellulite} to prove that $\Cleav_{S^n}$ is a coloured $E_{n+1}$-operad.

\begin{lemma}\label{cofiberlemma}
As in (A) of \ref{cellulite}, consider the latching space $L_{(G,A_0)} = \bigcup_{G' < G} D_k(G',A_0)$. The induced map $L_{(G,A_0)} \hookrightarrow D_k(G',A_0)$ is a cofibration 
\end{lemma}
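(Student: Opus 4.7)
The strategy is to exhibit the latching inclusion as a finite colimit of closed cofibrations, building on the fact (visible in diagram (\ref{hemisphereinc})) that every hemisphere inclusion $S^{l-1}_{\pm} \hookrightarrow S^{l}_{\pm}$ is a closed embedding of smooth submanifolds, hence a cofibration.

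First I would analyze the covering relations $G' \lessdot G$ in the poset $\mathpzc{K}^{n+1}(k)$. By observation \ref{posetstructure}, the order is the coarsest one making every projection $\gamma_{ij}$ order-preserving, so the covers are generated by the basic moves displayed in (\ref{fullgraphorder}): either a single edge label $\lambda_G(i,j)$ drops by one (with orientation preserved), or a single label drops while the corresponding orientation $\omega_G(i,j)$ flips. Under construction \ref{enfunctorconstruction}, the only effect on $D_k$ is that exactly one hemisphere factor $S^{\lambda_G(i,j)}_{\omega_G(i,j)}$ appearing in some $R_{a_0}(\iota_l)$ is replaced by a lower-dimensional hemisphere via one of the closed inclusions of (\ref{hemisphereinc}).

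Second I would verify that for each fixed triple $(a_0, i, \underline{\iota})$ and each cover $G' \lessdot G$, the inclusion of the corresponding $\Theta$-piece is a closed cofibration. By \ref{techfunc}, the map $\Theta_{a_0}(\sigma_G, \underline{\iota}, -)$ is an embedding onto the path component of $U$-cleaving configurations specified by $[T_{\sigma_G}, \underline{P}_{\sigma_G}]$, so it is enough to establish the cofibration property in the source $\prod_l R_{a_0}(\iota_l)$. Substituting one factor by its smaller-hemisphere sub-factor from (\ref{hemisphereinc}) is a closed cofibration, and closed cofibrations are stable under taking products with a fixed factor; hence each elementary inclusion $D_k(G', A_0) \cap \mathrm{im}\,\Theta_{a_0} \hookrightarrow D_k(G, A_0) \cap \mathrm{im}\,\Theta_{a_0}$ is a closed cofibration.

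Third I would assemble the full latching inclusion. Writing $L_{(G, A_0)} = \bigcup_{G' \lessdot G}\,D_k(G', A_0)$ and further decomposing each $D_k(G', A_0)$ as a union of $\Theta$-pieces indexed by $(a_0, i, \underline{\iota})$, the map $L_{(G, A_0)} \hookrightarrow D_k(G, A_0)$ is expressed as a finite colimit (iterated pushout) of closed cofibrations. The classical pushout-product statement for closed cofibrations of topological spaces then yields that the whole inclusion is itself a closed cofibration.

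\textbf{Main obstacle.} The technical subtlety is compatibility of the pieces: one must verify that for any two $G', G'' < G$ the intersection $D_k(G', A_0) \cap D_k(G'', A_0)$, formed inside $D_k(G, A_0)$, coincides with $D_k(G' \wedge G'', A_0)$ where $G' \wedge G''$ is the meet in $\mathpzc{K}^{n+1}(k)$, and analogously for intersections across distinct index data $(a_0, i, \underline{\iota})$ versus $(a_0', i', \underline{\iota}')$. The former reduces to the elementary identity that the intersection of two hemispheres $S^{\lambda'}_{\omega'} \cap S^{\lambda''}_{\omega''}$ inside $S^{\lambda_G}_{\omega_G}$ is again a hemisphere given by the labels and orientations of the meet, which is immediate from (\ref{hemisphereinc}); the latter reduces, via chop-equivalence of \ref{NCleave}, to the observation that overlapping $\Theta$-images share the same underlying configuration of decorating hyperplanes. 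Once these compatibilities are established, the pushout-product argument applies uniformly and the conclusion follows.
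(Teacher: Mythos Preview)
Your argument follows essentially the same route as the paper's: both proofs reduce everything to the fact that the hemisphere inclusions of diagram~(\ref{hemisphereinc}) are closed cofibrations, and then observe that $D_k(G',A_0)\hookrightarrow D_k(G,A_0)$ is assembled from these via products, open restrictions (the cleaving condition), and finite unions indexed by the combinatorics of $\mathpzc{K}^{n+1}(k)$. The paper phrases the conclusion as ``$D_k(G',A_0)$ sits as a lower-dimensional skeleton of a CW-structure on $D_k(G,A_0)$'', while you package the same content as a pushout-product argument for closed cofibrations together with an explicit meet-compatibility check; these are two ways of saying the same thing.

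One small imprecision: you call the assembled inclusion a \emph{finite} colimit of closed cofibrations, but you are also decomposing over $a_0\in A_0$, and $A_0\subseteq\Ob(\Cleav_{S^n})$ is in general uncountable. The paper handles this by noting that the cleaving condition is open, so the restriction to $R_{a_0}$ still yields a positive-codimension submanifold inclusion uniformly in $a_0$; you should either treat $a_0$ as a parameter (and argue cofibration fibrewise plus local triviality), or invoke the same openness observation rather than claiming finiteness of the colimit.
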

\begin{proof}
The inclusions of submanifolds in (\ref{hemisphereinc}) are of codimension strictly larger than 0, so these are automatically cofibrations. The maps out of $D_k(G',A_0)$ are built out of these maps by restricting to $a_0 \in A_0$-cleaving trees, along with pushouts and factors of cartesian products. Since whether a decoration of a $a_0$-cleaving tree is cleaving or not is an open condition (that is, if it holds for the hyperplane it holds for a small neighborhood of the hyperplane), the associated restriction of inclusions induced from (\ref{hemisphereinc}) will again be an inclusion of submanifolds that are of codimension greater than 0. Each $D_k(G',A_0) \hookrightarrow D_k(G,A_0)$ will hence result in a cofibration, that can be obtained as a lower-dimensional skeleton of a CW-structure on $D_k(G,A_0)$. Since the latching space is given by a finite union of these lower-dimensional spaces, the map from the latching space is again a cofibration.
\end{proof}

\begin{lemma}\label{limitlemma}
$\colim_{(G,A_0) \in (\mathpzc{K}^{n+1} \times \Ob(\Cleav_{S^n}))(k)} D_k(G,A_0) = (\Cleav_{S^{n}})(-;k)$%, where the limit is over the partial ordering of $\mathpzc{G}^n(k)$.
\end{lemma}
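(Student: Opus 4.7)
The plan is to observe that each $D_k(G,A_0)$ is a subset of $\Cleav_{S^n}(-;k)$ by construction, and every morphism in the indexing category $\mathpzc{K}^{n+1}(k) \times S(\Ob(\Cleav_{S^n}))$ is modeled by an inclusion of such subsets, so the colimit reduces to the union $\bigcup_{(G,A_0)} D_k(G,A_0)$. The containment $\bigcup_{(G,A_0)} D_k(G,A_0) \subseteq \Cleav_{S^n}(-;k)$ is then automatic, and only surjectivity needs proof: every $[T,\underline{P}] \in \Cleav_{S^n}(U;k)$ must lie in $D_k(G, \{U\})$ for some $G$. Equivalently, I need to produce a $G \in \mathpzc{K}^{n+1}(k)$, an $i$-string $\underline{\iota}$ for some $i$, and data $(s_l, r_l)_l \in (S^n \times \RR)^{k-1}$ satisfying the hemisphere constraints of (\ref{DDefine}), such that $[T, \underline{P}] = \Theta_U(\sigma_G, \underline{\iota}, (s_l, r_l)_l)$.

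First, I extract the permutation $\sigma$ from $[T,\underline{P}]$ by continuously rotating the hyperplanes of $\underline{P}$ into the parallel-and-orthogonal-to-the-$x$-axis configuration of \ref{basetrees}; this deformation stays within $\Cleav_{S^n}(U;k)$ and terminates at the base cleaving $[T_{\sigma},\underline{P}_{\sigma}]$. Any $G$ with $\sigma_G = \sigma$ will then satisfy the path-component restriction built into the definition of $D_k(G, \{U\})$. Next, exploiting the chop-equivalence of \ref{NCleave}, I choose a representative $(T',\underline{P}')$ of $[T,\underline{P}]$ whose tree shape arises from the root-towards-leaves recursion of \ref{techfunc}, and let $\underline{\iota}$ encode that building order: $\iota_1$ points to the vertex adjacent to the root, and each $\iota_l$ to the vertex grafted onto the partial tree at step $l-1$. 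The inputs $(s_l, r_l)$ are then read off as the $\kappa$-coordinates of the $l$-th hyperplane of $\underline{P}'$.

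What remains is to choose the edge labels $\lambda_G(i,j) \in \{0,\ldots,n\}$ so that each $s_l$ lies in $S^{\lambda_G(\iota_{l_{\min}}, \iota_{l_{\max}})}_{\omega_G(\iota_{l_{\min}}, \iota_{l_{\max}})}$; I expect this coherence issue to be the main obstacle, since the orientations $\omega_G$ are already fixed by $\sigma_G$ and need not match the actual sign of $s_l$. The resolution comes from the additional freedom supplied by the $\leftrightarrow_C$ relation of \ref{Crelex} and the $\leftrightarrow_B$ relation of \ref{Brelex}: flipping the orientation at an internal vertex replaces $s_l$ by $-s_l$ and swaps the two subtrees above that vertex, which in turn adjusts $\sigma_G$ locally, while the $B$-relation permits the requisite reshuffling of adjacent parallel hyperplanes. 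Proceeding inductively from the leaves of $T'$ toward the root, at each vertex I select the orientation so that $s_l$ lies in the hemisphere mandated by the updated $\omega_G$, and by taking the maximal label $\lambda_G = n$, which makes the relevant hemisphere the full closed $S^n_{\pm}$, I guarantee coverage of all of $S^n$. The resulting data realizes a chop-equivalent representative of $[T,\underline{P}]$ inside $\Theta_U(\sigma_G, \underline{\iota}, (s_l, r_l)_l)$, placing $[T,\underline{P}] \in D_k(G, \{U\})$ and closing the argument.
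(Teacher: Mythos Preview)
Your proposal is correct and follows the same approach as the paper's own proof. The paper argues in a single paragraph that $\Theta_U$ mimics the recursive cleaving procedure of \ref{cleaveconditions}, that the hemispheres appearing in $D_k(G,A_0)$ collectively cover $S^n$, and that the union over all $\sigma\in\Sigma_k$ handles the path-component restriction---exactly the three ingredients you identify and expand upon.

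Your elaboration via the $\leftrightarrow_C$ and $\leftrightarrow_B$ relations is more explicit than the paper's treatment, which simply asserts coverage without tracing through how the orientation $\omega_G$ is matched. One small point of phrasing: saying that $\leftrightarrow_C$ ``adjusts $\sigma_G$ locally'' slightly conflates two moves. The relation $\leftrightarrow_C$ acts on \emph{representatives} of $[T,\underline{P}]$, not on $G$; what you are really doing is choosing a different representative (hence a different normal vector $\pm s_l$) \emph{and simultaneously} selecting a different $\sigma_G$ so that the hemisphere constraint for that representative is met. Since the $k-1$ edges $(\iota_{l_{\min}},\iota_{l_{\max}})$ relevant to a fixed $\underline{\iota}$ form a Hamiltonian path in $K_k$, any prescription of signs along them is realized by some permutation, so the required $\sigma_G$ always exists. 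With that clarification your argument goes through, and the path you produce to $[T_{\sigma_G},\underline{P}_{\sigma_G}]$ indeed stays inside the maximal hemisphere $S^n_{\omega_G}$ because $\pm e_1$ lie on the equator $S^n_+\cap S^n_-$.
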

\begin{proof}
Any $[T,\underline{P}] \in \Cleav_{S^n}(-;k)$ can be obtained as an element of (\ref{DDefine}) for some choice of hemispheres determined by $G \in \mathpzc{K}^n(k)$. Note namely that the definition of $\Theta_U$ is a function that exactly mimics the cleaving procedure above \ref{cleaveconditions}, and therefore will$[T,\underline{P}]$ as obtained by this cleaving procedure be obtained since the hemispheres involved in the image of $D_k(G,A_0)$ cover $S^n$, and since we in \ref{enfunctorconstruction} are taking of path-components of $[T_{\sigma},P_{\sigma}]$ for all $\sigma \in \Sigma_k$.

\end{proof}

%\begin{corollary}\label{objectlemma}
%$\colim_{U \in \mathfrak{E}} \pCleav_{S^n}(U;k) = \pCleav_{S^n}(-;k)$
%\end{corollary}
%\begin{proof}
%Follows by definition of $D_k$ and $\Ob(\pCleav_{S^n})$ as the set of all timber, obtained from cleaving trees; since by \ref{limitlemma}, all cleaving trees are obtained in the applied limiting system of $D_k$.
%\end{proof}

We say that for $(T,\underline{P})$ an $S^n$-cleaving tree that the hyperplane of the decoration $P_j$ \emph{dominates} another hyperplane $P_l$ of the decoration of $T$ if $P_j$ and $P_l$ intersect within $D^{n+1}$ and there are points of $P_l$ that lie on $\beta_{[T,\underline{P}]}$ and on one of the subspaces of $\RR^{n+1}$ that has been bisected by $P_j$, but none on the other side.

\begin{lemma}\label{contractlemma}
Given $A_0 \in S(\Ob(\Cleav_{S^n}))$ and $G \in \mathpzc{K}^{n+1}(k)$, there is a homotopy equivalence $D_k(G,A_0) \simeq A_0$, where $A_0$ is considered as a subspace of $\Cleav_{S^n}(-;k)$ by \ref{famhyp}.
\end{lemma}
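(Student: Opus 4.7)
The plan is to construct an explicit deformation retraction from $D_k(G,A_0)$ onto the embedded copy of $A_0 \subseteq \Cleav_{S^n}(-;k)$ given by sending $a_0 \in A_0$ to $[T_{\sigma_G}, \underline{P}_{\sigma_G}]$ as in \ref{famhyp}. The underlying geometric idea is that all the parameter spaces involved—closed hemispheres $S^{m}_{\pm}$ and translational factors $\RR$—are contractible, and the restriction to the cleaving locus followed by the restriction to the path component of $[T_{\sigma_G}, \underline{P}_{\sigma_G}]$ preserves contractibility. The retraction simultaneously rotates normal vectors to the positive $x$-axis direction and slides translations to the equidistant configuration.

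First I would fix $a_0 \in A_0$, $i \in \{1,\ldots,k-1\}$, and an $i$-string $\underline{\iota}$, and examine the parameter block $\Theta_{a_0}(\sigma_G, \underline{\iota}, (R_{a_0}(\iota_1),\ldots,R_{a_0}(\iota_{k-1})))$. Each factor $R_{a_0}(\iota_l)$ is a path component of an open subset of $S^{\lambda_G(\iota_{l_{\min}},\iota_{l_{\max}})}_{\omega_G(\iota_{l_{\min}},\iota_{l_{\max}})}\times \RR$. The relevant hemispheres all share the positive $x$-axis direction in their closure (or boundary), so the standard parallel configuration, where every normal vector is $(1,0,\ldots,0)$ and each translation $r_l$ lies at the equidistant position determined by $a_0$ and $\sigma_G$, is a common specialization point that moreover sits in the same path component as every other element, by construction of $\Theta$ in \ref{techfunc}.

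Next I would define the deformation $H \colon D_k(G,A_0)\times I \to D_k(G,A_0)$ in two commuting stages: Stage one uses a linear (geodesic) interpolation in each hemisphere factor from the given normal $s_l$ to $(1,0,\ldots,0)$, producing a family of configurations with progressively more parallel hyperplanes; stage two linearly interpolates $r_l$ to the canonical equidistant value. At $t=1$ we land in the embedded image of $A_0$. Continuity is automatic from the product/union construction, and the fibration property of $\ev_{\inc}$ from \ref{evincfibration} guarantees that the resulting paths track $a_0$ continuously.

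The main obstacle is verifying that this deformation never leaves $D_k(G,A_0)$—that is, that cleaving conditions remain satisfied throughout and that the combinatorial tree underlying each intermediate configuration is compatible with the one used to define $\Theta_{a_0}$. Transversality is an open condition, so it can only fail at codimension-at-least-one loci where two hyperplanes become parallel or intersect outside $D^{n+1}$; such events are precisely those governed by the $\leftrightarrow_B$ and $\leftrightarrow_C$ relations of \ref{Crelex} and \ref{Brelex}, which are already quotiented out in the definition of $\Cleav_N$ in \ref{NCleave}. Consequently the apparent wall-crossings in the hemisphere coordinates disappear after passage to chop-equivalence classes, and the homotopy descends to a well-defined map into $D_k(G,A_0)$. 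Finally, since the embedding of $A_0$ is by construction fixed pointwise by $H$ at all times and $H_1$ factors through $A_0$, the embedding is a strong deformation retract, giving the asserted homotopy equivalence $D_k(G,A_0) \simeq A_0$.
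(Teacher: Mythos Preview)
There is a genuine gap in the central claim that the geodesic homotopy stays inside $D_k(G,A_0)$. Your argument is that failures of the cleaving condition occur only on codimension-one walls and are absorbed by the $\leftrightarrow_B$, $\leftrightarrow_C$ relations. But those relations only identify different tree-representations of the \emph{same} timber decomposition; they do nothing when the cleaving condition itself fails. If you rotate the normal $s_l$ along a geodesic in the ambient hemisphere while the other hyperplanes are held fixed (or are rotating simultaneously), one of the two pieces of timber cut off by $\kappa(s_l,r_l)$ can become empty, or $\kappa(s_l,r_l)$ can stop intersecting the relevant timber at all. At that instant the decorated tree is no longer $a_0$-cleaving, so the path leaves $\Cleav_{S^n}(-;k)$ entirely---this is not a chop-equivalence wall. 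The spaces $R_{a_0}(\iota_l)$ are by definition path components of an \emph{open} subset of the hemisphere $\times\,\RR$, and a straight geodesic in the ambient closed hemisphere has no reason to stay in that open set.

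The paper's proof confronts exactly this difficulty and is organised quite differently. It first treats a single block $\Theta_{a_0}(\sigma_G,\underline\iota,\dots)$ for a fixed $i$-string $\underline\iota$: after enlarging $a_0$ to an $a_1$ near the full sphere (to buy room), it straightens hyperplanes \emph{one at a time in the dominance order dictated by $\underline\iota$}, using the $\RR$-factor to push each hyperplane toward tangency so that it never loses its cleave while being brought parallel to its neighbour. This ordered, sequential straightening is what keeps the path inside the cleaving locus. A separate third step then analyses how the contractible blocks for different $\underline\iota$ overlap and uses Mayer--Vietoris/Van Kampen to conclude contractibility of the union---a point your proposal does not address. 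Finally, note that the paper explicitly records that the resulting retraction is \emph{not} strong (the detour through $a_1$ moves points of $A_0$), so your last paragraph overclaims.
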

Another usage of terminology would be that we supply a deformation retraction onto $A_0$, that is not a strong deformation retraction, in that we supply a homotopy $F \colon \Cleav_{S^n}(-;k) \times [0,1] \to \Cleav_{S^n}(-;k)$, where we only for $t \in \{0,1\}$ guarantee $F(a_0,t) = a_0$ for $a_0 \in A_0 \subset \Cleav_{S^n}(-;k)$ included as above. 
\begin{proof}
We break the proof, specifying the homotopy into three steps. Our overall strategy will be to in the first step show that the effect of different elements $a_0$ of $\Ob(\Cleav_{S^n})$ as input colours to $D_k$ can be neglected, as we similar to the proof of \ref{contractibletimber} can push the defining hyperplanes towards tangent-hyperplanes of $S^n$. 

The second step will show that for a given $\iota \in I_{k-1}|_i$, the space $$
\Theta_{a_0}\left(\sigma_G,\underline{\iota},\left(R_{a_0}(\iota_1),\ldots,R_{a_0}(\iota_{k-1})\right)\right)$$ given as one of the constituents in the union of (\ref{DDefine}) is contractible. Here, we use the $\iota$ fixing an ordering of the hyperplanes cleaving $S^n$. Using this ordering of hyperplanes, we can reparametrize them individually until they all align orthogonally to the first coordinate axis, which hence allow us to reduce back to step 1; dealing with the case of hyperplanes orthogonal to the first coordinate axis. %This step however becomes technical, since one needs to take care in ensuring that this pushing of hyperplanes is done along timber cleaving $S^n$.

In the final step, we show that the contractible spaces of step 2 are all glued together along a single contractible space containing the element $[T_{\sigma_G},\underline{P_{\sigma_G}}]$, hence making the entire $D_k(G,A)$ contractible.

\begin{bf}Step 1:\end{bf} Assume that the labelling of the edges of $G \in \mathpzc{K}^{n+1}(k)$ satisfy $\lambda_G(i,j) = 0$ for all $i < j \in \{1,\ldots,k\}$, i.e. all edges of $G$ are labelled by $0$. In this case, we have that $D_k(G,A_0)$ will pointwise in $A_0$ be the space of all sets of $k-1$ hyperplanes $P_1,\ldots,P_{k-1}$ orthogonal to the first coordinate axis of $\RR^{n+1}$ that cleaves $a_0 \in A_0$, where the parallel hyperplanes are ordered as $P_{\sigma_G}$ of \ref{basetrees}, and $\sigma_G \in \Sigma_k$ is the permutation associated to $G$.

The space $D_k(G,A_0)$ is homotoped onto $A_0$ by considering the interval $J_{a_0} \subseteq [-1,1]$ of the first coordinate axis of $\RR^{n+1}$ as given in \ref{intervaliscleaving}. Homotoping $P_1,\ldots,P_k$ to be equidistant within $J_{a_0}$ for each $a_0 \in A_0$ yields a deformation retraction onto $A_0$ in the sense of \ref{famhyp}, since the topology of $\Ob(\Cleav_{S^n})$ as determined by hyperplanes forming the timber lets the endpoints of $J_{a_0}$ vary continuously as functions of $\Ob(\Cleav_{S^n})$ to $\RR^2$, making the equidistance of hyperplanes continous as well.

\begin{bf}Step 2:\end{bf} For a general $G \in \mathpzc{K}^n(k)$ and a fixed $i$-string $\underline{\iota}$, and $a_0 \in A_0$, we see that the space $\Theta_{a_0}(\sigma_G,\underline{\iota},(R_{a_0}(\iota_1),\ldots,R_{a_0}(\iota_{k-1})))$ is weakly equivalent to a product of hemispheres, considered as a subspace of $\Cleav_{S^n}(-;k)$.

First of all, note that we can homotope $a_0$ in a similar fashion to \ref{contractibletimber}, bringing the hyperplanes defining $a_0$ so close to tangent-hyperplanes that they all intersect nontrivially within $S^n$. Call the result of this deformation $a_1 \in \Ob(\Cleav_{S^n})$. The timber $a_1$ provides extra freedom of movement of the hyperplanes given by $\Theta_{a_1}$.

We homotope $\Theta_{a_1}(\sigma_G,\underline{\iota},(R_{a_1}(\iota_1),\ldots,R_{a_1}(\iota_{k-1})))$ by using the ordering of dominance of hyperplanes as specified by $\underline{\iota}$. The least dominant hyperplane will be $\kappa(s_{k-1},r_{k-1})$, which in turn corresponds to a hyperplane $P_{k-1}$ of the parallel hyperplanes $\underline{P_{\sigma_G}}$. If $P_{k-1}$ is not the left-most hyperplane in the configuration of hyperplanes, then there is a hyperplane $P'$ to the immediate left of $P_{k-1}$. If $P_{k-1}$ is the leftmost hyperplane, then let $P'$ be the hyperplane immediately to the right of $P_{k-1}$.

The configuration specified by $\Theta_{a_1}$ will be such that the hyperplane $\kappa(s_j,r_j)$ corresponding to $P'$ will bound the same timber as $\kappa(s_{k-1},r_{k-1})$. This means that there is a path in the subspace of $\left(S^n \times \RR\right)^2$ parametrizing $\kappa(s_{k-1},r_{k-1})$ and $\kappa(s_l,r_l)$ such that $\kappa(s_{k-1},r_{k-1})$ can be brought to be parallel with $\kappa(s_l,r_l)$. This path can be chosen to be geodesic along the $S^n$-factor parametrizing $\kappa(s_{k-1},r_{k-1})$, and uses the freedom of the $\RR^{n+1}$-factor for the parametrizing subspace of both $\kappa(s_{k-1},r_{k-1})$ and $\kappa(s_l,r_l)$ to, if necessary, bring these hyperplanes closer to tangent-hyperplanes of $S^n$. The freedom of bringing these hyperplanes closer to tangent hyperplanes ensures that any reparametrization by $s' \in S^n$ of $\kappa(s_{k-1},r_{k-1})$ to $\kappa(s',r_{k-1}')$ can have the associated configuration of hyperplanes cleaving.

With $\kappa(s_{k-1},r_{k-1})$ and $\kappa(s_j,r_j)$ parallel, they can be translated as close to each other as suited and hereby for all practical purposes count as a single cleaving hyperplane. Therefore, the process now iterates by taking $\kappa(s_{k-2},r_{k-2})$ and similarly bringing it parallel to the nearby hyperplane as specified above. We continue this process until all the hyperplanes have been brought to be parallel, and they hence can be brought to be orthogonal to the first coordinate axis of $\RR^{n+1}$. %If necessary, we can furthermore deform $a_0$ 

\begin{bf}Step 3:\end{bf} 
Given $\underline{\iota}$ a $i$-string, and $\underline{\lambda}$ a $l$-string, we show that the associated spaces of $\underline{\iota}$ and $\underline{\lambda}$, $$\Theta_{a_0}(\sigma_G,\underline{\iota},(R_{a_0}(\iota_1),\ldots,R_{a_0}(\iota_{k-1}))) \textrm{ and } \Theta_{a_0}(\sigma_G,\underline{\lambda},(R_{a_0}(\lambda_1),\ldots,R_{a_0}(\lambda_{k-1})))$$ respectively, are glued together along contractible subsets, where we consider these as subsets of $\Cleav_{S^n}(-;k)$. Since step 2 tells us that the space associated to $\underline{\iota}$ and the one associated to $\underline{\lambda}$ are contractible, we have that the glued spaces are contractible. This follows combining the Mayer-Vietoris sequence, Hurewicz map and the Van Kampen theorem so that if $A$ and $B$ are two contractible spaces and $A \cap B$ is contractible then $A \cup B$ is again a contractible space. As we shall see, all spaces associated to elements of $\bigcup_{i=1}^{k-1} I_i$ will be glued together along the common basepoint determined by the hyperplanes $\underline{P}_{\sigma_G}$ orthogonal to the first coordinate axis of $\RR^{n+1}$ as determined by the orientation of $G$. In effect, the glueing of all these spaces will result in a contractible space.

In order to see how the spaces are glued together, we see in \ref{techfunc} that the spaces associated to $\underline{\iota}$ and $\underline{\lambda}$ are given by choosing new normal-vectors for the hyperplanes $P_1,\ldots,P_k$ of $\underline{P}_{\sigma_G}$ and decorating them on different trees. 

Considering $\underline{\iota}$ and $\underline{\lambda}$ as permutations in $\Sigma_k$ as given in \ref{stringpermute}, it a necessary but not sufficient condition that points in the spaces associated to $\underline{\iota}$ and $\underline{\lambda}$ determined by the tuples $((s_1,r_1),\ldots,(s_{k-1},r_{k-1})$ and $((s_1',r_1'),\ldots,(s_{k-1}',r_{k-1}'))$ satisfy
\begin{eqnarray}\label{necccleave}
(s_{\underline{\iota}^{-1}(i)},r_{\underline{\iota}^{-1}(i)}) = (s_{\underline{\lambda}^{-1}(i)}',r_{\underline{\lambda}^{-1}(i)}')
\end{eqnarray}
for all $i \in \{1,\ldots,k-1\}$, since the associated timber of the cleaving trees have to agree.

The condition (\ref{necccleave}) is not sufficient since the way the hyperplanes dominate each other might be different according to the two trees that the hyperplanes are decorating. Note first of all that if (\ref{necccleave}) is satisfied, and $s_{\underline{\iota}^{-1}(i)} = s_{\underline{\lambda}^{-1}(i)}$ is the point $S^0_{\omega(\iota(i)}$ all the hyperplanes are parallel and orthogonal to the first coordinate axis, where there is no dominance amongst hyperplanes, identifying the points.

We have an ordering on the hyperplanes of the spaces associated to $\underline{\iota}$ and $\underline{\lambda}$, given by $P_{\underline{\iota}(1)},\ldots,P_{\underline{\iota}(k-1)}$ and $P_{\underline{\lambda}(1)},\ldots,P_{\underline{\lambda}(k-1)}$. Assume we are given $[T,P_{\underline{\iota}}]$ and $[T',P_{\underline{\lambda}}]$ satisfying (\ref{necccleave}) such that they agree as elements of $D_k(G,A_0)$. We want to show that there is a unique path homotoping them onto trees decorated by hyperplanes orthogonal to the first coordinate axis, in which case step 1 applies to provide the deformation retraction.

For the hyperplane $P_{\underline{\iota}(1)}$, no other hyperplanes $P_{\underline{\iota}(2)},\ldots,P_{\underline{\iota}(k-1)}$ will dominate $P_{\underline{\iota}(1)}$. 

We can use step 2 to assume that for the 2-ary tree decorated by only $P_{\underline{\iota_1}}$, the reparametization of the hyperplane along the geodesic path along the hemisphere $S^{\lambda_G(\iota_{1_{\min}},\iota_{1_{\max}})}_{\omega_G(\iota_{1_{\min}},\iota_{1_{\max}})}$ will always be contained in $\Cleav_{S^n}(-;2)$. 

However, we need to consider these paths for our trees decorated by multiple hyperplanes. That we in \ref{enfunctorconstruction} are taking path-components tells us that the geodesic path along the hemisphere $S^{\lambda_G(\iota_{1_{\min}},\iota_{1_{\max}})}_{\omega_G(\iota_{1_{\min}},\iota_{1_{\max}})}$ parametrizing $P_{\underline{\iota}(1)}$ will be contained in the space associated to $\underline{\iota}$. Since the elements $[T,P_{\underline{\iota}}]$ and $[T',P_{\underline{\lambda}}]$ agree, the hyperplane $P_{\underline{\lambda}^{-1}(\underline{\iota}(1)}$ as a decoration of $[T',\underline{P}_{\underline{\lambda}}]$ will be bound to dominate the same hyperplanes as $P_{\underline{\iota}(1)}$ as an element of $[T,\underline{P}_{\underline{\iota}}]$, and this will remain true along the reparametization along the geodesic, until $P_{\underline{\iota}(1)}$ no longer dominates any other hyperplanes, since otherwise either $[T,\underline{P}_{\underline{\iota}}]$ or $[T,\underline{P}_{\underline{\lambda}}]$ would be reparametrized to non-cleaving trees. To ensure that no new dominance occurs after the potential step where $P_{\underline{\iota}(1)}$ no longer dominates any other hyperplanes, and the cleaving elements hence would disagree, one translates along the hyperplane using the $\RR$-factor determining its position in $\RR^{n+1}$, moving it at the same speed in the opposite direction as $P_{\underline{\iota}(1)}$ is moving away from the hyperplanes it used to dominate.

This eventually brings the hyperplanes $P_{\underline{\iota}(1)}$ parallel with the first coordinate axis of $\RR^{n+1}$, and one iterates this construction for the remaining $P_{\underline{\iota}(2)},\ldots,P_{\underline{\iota}(k-1)}$ in that order, to -- along a geodesic in the parametrizing hemispheres -- bring them all parallel to the first coordinate axis. Finally, having used step 2, one applies the inverse homotopy of the one from $a_1$ to $a_0$, to make these hyperplanes cleave $a_0$. 

As noted previously, step 1 now applies to finish the proof.
\end{proof}

%The core of the proof of this lemma, is the communtative diagram of \ref{keytechnical}.

%\begin{lemma}
%$\Ob(\Cleav^\dagger_{S^n})$ is contractible for any $n > 0$.
%\end{lemma}
%\begin{proof}

%Under the quotient map into $\Ob(\Cleav_{S^n})$ of \ref{planetopology}, transporting the hyperplanes in the direction against their normal vector is continous. Since we are transporting them along a sphere, the hyperplanes will satisfy (3) of \ref{cleavageopeard}, until they are tangent to $S^n$. This therefore defines a homotopy equivalence of $\Ob(\Cleav_{S^n}) \simeq \{N\}$. 

%Note also that pushing hyperplanes only increases connectivity of the timber, making the above argument hold in case $\dagger$ indicates that the cleavage operad is connective.
%\end{proof}

\begin{theorem}\label{entheorem}
$\Cleav_{S^{n}}$ is a coloured $E_{n+1}$-operad.
\end{theorem}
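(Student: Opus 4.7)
The plan is to invoke Theorem \ref{cellulite} on the explicit functor $D \colon \mathpzc{K}^{n+1} \times S(\Ob(\Cleav_{S^n})) \to S(\Cleav_{S^n})$ assembled in Construction \ref{enfunctorconstruction}. That functor encodes each edge of a full level graph $G$ as a choice of hemisphere $S^{\lambda_G(i,j)}_{\omega_G(i,j)}$ from the diagram (\ref{hemisphereinc}) along which a corresponding hyperplane is allowed to rotate, and it encodes the poset structure on $\mathpzc{K}^{n+1}(k)$ via the cofibration sequence of hemispheres. Because $\Theta_U$ is built by mimicking the recursive cleaving procedure from the beginning of Section \ref{cleavageoperadsection}, and $\mathpzc{K}^{n+1}$ composes by substituting labelled subgraphs whose labels are copied onto the grafted edges, $D$ is automatically a morphism of coloured operads; the work lies entirely in verifying conditions (A), (B), (C) of \ref{cellulite}.

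Conditions (A) and (C) are the routine ones. For (A), the inclusions in (\ref{hemisphereinc}) are closed inclusions of positive codimension and hence cofibrations; the restriction to cleaving configurations is an open condition, and the pushouts, unions and cartesian products used to build $D_k(G,A_0)$ out of these hemispheres preserve the cofibration property. Since the latching space $L_{(G,A_0)}$ is a finite union of such lower-dimensional strata of $D_k(G,A_0)$, this is a cellular inclusion, and (A) follows (Lemma \ref{cofiberlemma}). For (C), Lemma \ref{limitlemma} identifies $\colim_{(G,A_0)} D_k(G,A_0)$ with $\Cleav_{S^n}(-;k)$ by observing that any $[T,\underline{P}] \in \Cleav_{S^n}(-;k)$ arises as an output of $\Theta_U$ once the normal vectors of its hyperplanes are allowed to range over $S^n$ (i.e.\ over the union of all hemispheres as $G$ varies) and all $(\sigma,\underline{\iota})$ contribute via the union in (\ref{DDefine}).

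The main obstacle is condition (B), i.e.\ Lemma \ref{contractlemma}, which asserts $D_k(G,A_0) \simeq A_0$ naturally in $A_0$. The strategy I would follow has three layers. First I would handle the case where every edge of $G$ is labelled $0$: then all $k-1$ hyperplanes are forced orthogonal to the first coordinate axis, and the equidistance construction of \ref{basetrees} together with the continuous dependence of the interval $J_{a_0}$ (\ref{intervaliscleaving}) on $a_0$ produces a deformation retraction onto the embedding $A_0 \hookrightarrow \Cleav_{S^n}(-;k)$ of \ref{famhyp}. Second, for a single $i$-string $\underline{\iota}$ and a general $G$, I would deformation-retract $\Theta_{a_0}(\sigma_G,\underline{\iota},\ldots)$ back to this orthogonal configuration by rotating the hyperplanes along geodesics in their respective hemispheres in the order of decreasing dominance dictated by $\underline{\iota}$, starting from the least dominant $\kappa(s_{k-1},r_{k-1})$, and exploiting the $\RR$-translation factor, together with a preliminary push $a_0 \leadsto a_1$ toward tangent-hyperplanes of $S^n$, to keep each intermediate configuration cleaving. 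Third, as $\underline{\iota}$ ranges over $\bigcup_i I_{k-1}|_i$ the contractible pieces all contain the common basepoint $[T_{\sigma_G},\underline{P}_{\sigma_G}]$ and glue along contractible intersections, so iterated Mayer--Vietoris plus Van Kampen yields contractibility of the whole fibre $D_k(G,A_0)$ over each $a_0 \in A_0$.

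The subtlest point inside step three is that two configurations coming from different $i$-strings $\underline{\iota}, \underline{\lambda}$ may coincide as elements of $\Cleav_{S^n}(-;k)$ via (\ref{necccleave}) while the underlying cleaving trees disagree on which hyperplane dominates which; one must show the geodesic homotopy just described from either viewpoint lands in the same path component and remains cleaving throughout. Here the restriction to the path component of $[T_{\sigma_G},\underline{P}_{\sigma_G}]$ imposed in \ref{enfunctorconstruction} is essential: it keeps the dominance pattern rigid until a dominating hyperplane genuinely separates from the one it dominates, at which moment a synchronized $\RR$-translation (of the freed hyperplane against the hyperplane it was dominating) prevents any new dominance relation from being created. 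This is the delicate step, but it is controlled by the fact that the cleaving condition is open and the hemispheres are convex, so the required geodesics can be chosen continuously in $a_0$. Once (A), (B), (C) are established, Theorem \ref{cellulite} yields a weak equivalence of coloured operads $\Cleav_{S^n} \simeq |\mathcal{N}(\mathpzc{K}^{n+1})| \times \Ob(\Cleav_{S^n})$, and by \ref{knowen} together with the standard fact that $|\mathcal{N}(\mathpzc{K}^{n+1})|$ is a monochrome $E_{n+1}$-operad, this is precisely the content of Definition \ref{enoperaddefine}.
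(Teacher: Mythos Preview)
Your proposal is correct and follows essentially the same route as the paper: the paper's own proof of the theorem is a one-liner that simply cites Lemmas \ref{cofiberlemma}, \ref{limitlemma} and \ref{contractlemma} as verifying conditions (A)--(C) of Theorem \ref{cellulite} for the functor $D$ of Construction \ref{enfunctorconstruction}, together with the remark that the explicit homotopy in \ref{contractlemma} supplies the naturality in $S(\Ob(\Cleav_{S^n}))$ demanded by (B). Your write-up just unfolds the content of those three lemma proofs (including the three-step strategy for \ref{contractlemma} and the dominance/$\RR$-translation subtlety in step~3), so the argument and its key ideas coincide with the paper's.
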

\begin{proof}
The lemmas \ref{cofiberlemma}, \ref{limitlemma}, \ref{contractlemma} check (A)-(C) in \ref{cellulite} the functor $D$ of \ref{enfunctorconstruction} should satisfy in order for $\Cleav_{S^{n}}$ to be $E_{n+1}$. Note that \ref{contractlemma} indeed gives the full naturality as stated in (B), since we give an explicit homotopy equivalence onto $\Ob(\Cleav_{S^n}) \subset \Cleav_{S^n}(-;k)$ that respects morphisms of $S(\Ob(\Cleav_{S^n}))$.%The entry (A) of \ref{cellulite} is evident from the definition of the functor $D$ in \ref{enfunctorconstruction}
%For simplicity, choose $s = +++\cdots$. We shall show that $\Cleav_{S^n,s}$ has a cellularization, as described in \ref{cellulite}. We must therefore first of all adress the  
\end{proof}

\begin{corollary}
There is an equivalence of operads 
$$\h_*(\Cleav_{S^n}) \cong \h_*(\Disk_{n+1})$$
\end{corollary}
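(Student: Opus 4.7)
The plan is to chain together the results already established in the excerpt. By Theorem \ref{entheorem}, $\Cleav_{S^n}$ is a coloured $E_{n+1}$-operad, so by Definition \ref{enoperaddefine} there is a zig-zag of weak equivalences of coloured topological operads connecting $\Cleav_{S^n}$ to a product $\mathpzc{P} \times \Ob(\Cleav_{S^n})$, where $\mathpzc{P}$ is a monochrome $E_{n+1}$-operad. Any two monochrome $E_{n+1}$-operads are weakly equivalent by definition, so in particular $\mathpzc{P}$ is weakly equivalent to the little $(n+1)$-disks operad $\Disk_{n+1}$, which gives an isomorphism of monochrome operads $\h_*(\mathpzc{P}) \cong \h_*(\Disk_{n+1})$.

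Next I would invoke Proposition \ref{contractibletimber} and Proposition \ref{evincfibration} to legitimise passage to homology. Proposition \ref{contractibletimber} gives $\Ob(\Cleav_{S^n}) \simeq *$, and Proposition \ref{evincfibration} gives that $\ev_{\inc}$ is a fibration, so the hypotheses of Proposition \ref{colourtomono} are met; consequently $\h_*(\Cleav_{S^n})$ is genuinely a classical monochrome operad rather than a partial one. The same pair of hypotheses applies to $\mathpzc{P} \times \Ob(\Cleav_{S^n})$ (where the evaluation at the incoming colour is the projection, which is a fibration), so $\h_*(\mathpzc{P} \times \Ob(\Cleav_{S^n}))$ is also a classical monochrome operad. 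Because $\Ob(\Cleav_{S^n})$ is contractible, the projection $\mathpzc{P}(k) \times \Ob(\Cleav_{S^n}) \to \mathpzc{P}(k)$ is a weak equivalence compatible with operadic composition, so $\h_*(\mathpzc{P} \times \Ob(\Cleav_{S^n})) \cong \h_*(\mathpzc{P})$ as monochrome operads.

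Finally, applying $\h_*$ to the zig-zag of weak equivalences yields an isomorphism $\h_*(\Cleav_{S^n}) \cong \h_*(\mathpzc{P} \times \Ob(\Cleav_{S^n})) \cong \h_*(\mathpzc{P}) \cong \h_*(\Disk_{n+1})$ of monochrome operads, as required.

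The only subtlety — and the step I would be most careful about — is checking that the passage through $\mathpzc{P} \times \Ob(\Cleav_{S^n})$ genuinely respects the operadic composition maps at the level of homology, since the colouring of $\Cleav_{S^n}$ is nontrivial even though its object space is contractible. This is exactly what Proposition \ref{colourtomono} is designed to handle (via Eilenberg--Zilber on the pullback along the fibration $\ev_{\inc}$), so the argument amounts to verifying that each operad in the zig-zag of Definition \ref{enoperaddefine} satisfies its hypotheses; no new computation is required.
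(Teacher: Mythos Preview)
Your argument is correct and follows essentially the same route as the paper: the paper's proof simply cites Proposition~\ref{contractibletimber} and Proposition~\ref{evincfibration} to invoke Proposition~\ref{colourtomono}, and then appeals to Theorem~\ref{entheorem} together with Remark~\ref{knowen}. Your write-up unpacks these citations in more detail (in particular the passage through $\mathpzc{P}\times\Ob(\Cleav_{S^n})$ and the check that the intermediate operads also satisfy the hypotheses of Proposition~\ref{colourtomono}), but the underlying strategy and the ingredients used are identical.
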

\begin{proof}
By \ref{contractibletimber} the colours of $\Cleav_{S^n}$ are contractible, and by further \ref{evincfibration}, we can apply \ref{colourtomono}, so $\H_*(\Cleav_{S^n})$ is a monochrome operad and \ref{entheorem} together with \ref{knowen} gives the corollary.
\end{proof}

\section{Deforming Cleavages Symmetrically}\label{defsym}
Given a topological space, $X$, we let $\Homeo(X)$ denote the group of self-homeo\-mor\-phisms of $X$. %When $X$ carries smooth structure we let $\Diff(X)$ denote the group of self-diffeomorphisms. In either case, the group product $f.g$ in the groups is given by composition $g \circ f$.

Similar to having a monochrome $G$-operad in the category of $G$-spaces, for coloured topological operads we give the following definition: % The group-structure $\circ \colon \Homeo(X) \times \Homeo(X) \to \Homeo(X)$ is given by composition of homeomorphisms, and the identity given by $\One_X \colon X \to X$.

\begin{definition}\label{groupaction}
A topological group $G$ \emph{acts} on a coloured topological operad, $\mathpzc{O}$ if we are given
\begin{itemize}
\item $. \colon G \to \Homeo(\Ob(\mathpzc{O}))$ a continuous map. Given $g \in G$ and $U \in \Ob(\mathpzc{O})$, under adjunction, we denote the corresponding acted upon element as $g.U$.
\item $\alpha_i \colon G \to \Homeo(\mathpzc{O}(-;m))$ continous maps for all $i\in \{1,\ldots,m\}$ and $m \in \NN$.
\end{itemize}
and these respect the topological structure of operads; i.e. letting $g \in G$ the following diagram should be commutative for all $i \in \{1,\ldots,k\}$ and $j \in \{1,\ldots,m\}$:%\NN$, where we to $j \notin \{1,\ldots,m\}$ let $\alpha_j(g) = \One$:
\begin{eqnarray}\label{actiondiagram}
\xymatrix{
& & \mathpzc{O}(A;k+m-1)\ar[ddl]|(.6){\alpha_{i+j-1}(g)} \\
& & \mathpzc{O}(A;k) \times_{\Ob(\mathpzc{O})} \mathpzc{O}(-;m)\ar[r] \ar[u]_-{\circ_i}\ar[d]\ar[ddl] & \mathpzc{O}(-;m)\ar[d]|{\ev_{\inc}}\ar[ddl]|(.7){\alpha_j(g)}\\
& \mathpzc{O}(A;k+m-1) & \mathpzc{O}(A;k)\ar[r]^{\ev_{i}}\ar[r]\ar[ddl]|(.3){\alpha_i(g)} & \Ob(\mathpzc{O})\ar[ddl]|{.g}\\
 & \mathpzc{O}(A;k) \times_{\Ob(\mathpzc{O})} \mathpzc{O}(-;m)\ar[r]\ar[u]_-{\circ_i}\ar[d] & \mathpzc{O}(-;m)\ar[d]|{\ev_{\inc}}
\\
& \mathpzc{O}(A;k)\ar[r]^{\ev_{i}}\ar[r] & \Ob(\mathpzc{O}) & .
}
\end{eqnarray}
\end{definition}
In the classical setting, when $\mathpzc{O}$ is a monochrome operad, this recovers the notion of a $G$-operad. In this spirit we call a coloured operad satisfying the above a \emph{coloured G-operad}. %The key point is that we require $G$ to act on the space of objects/colours, simultaneaously with the space of objects. 

In \cite[2.1]{WahlBat}, semidirect products of monochrome operads are introduced, and we can expand the notion to the coloured setting by only expanding a little on the operadic evaluation maps:

\begin{definition}\label{semidirect}
For a coloured topological operad $\mathpzc{O}$ with an action of a group $G$, we can form the \emph{semidirect product} of $\mathpzc{O}$ by $G$, as for the monochrome setting denoted $\mathpzc{O} \rtimes G$, by letting
\begin{itemize}
\item $\Ob(\mathpzc{O} \rtimes G) = \Ob(\mathpzc{O})$
\item $\mathpzc{O} \rtimes G = \mathpzc{O}(-;k) \times G^k$
\end{itemize}
Letting $\ev_i$ and $\ev_{\inc}$ denote the operadic evaluation maps of $\mathpzc{O}$. The operadic evaluation maps $\ev_i^G$ and $\ev_{\inc}^G$ for the semidirect product are given by
$\ev_i^G(\omega,\rho_1,\ldots,\rho_k) = \rho_i.\ev_i(\omega)$ and $\ev_{\inc}^G = \ev_{\inc}$. As for the monochrome case, the operadic composition $\circ_i \colon (\mathpzc{O} \rtimes G)(-;k) \times_{\Ob(\mathpzc{O})} (\mathpzc{O} \rtimes G)(-;m) \to (\mathpzc{O} \rtimes G)(-;k+m-1)$ is given by twisting the composition of $\mathpzc{O}$ through the action of $G$ in the following sense:

$$
(\omega;\rho_1,\ldots,\rho_k) \circ_i (\omega';\eta_1,\ldots,\eta_m) = (\omega \circ_i \rho_i.\omega';\rho_1,\ldots,\rho_{i-1},\rho_i.\eta_1,\ldots,\rho_i.\eta_m,\rho_{i+1},\ldots,\rho_k)
$$

\end{definition}

\begin{observation}\label{sonaction}
As a gadget constructed from $\RR^{n+1}$, there is an induced action of $\SO(n+1)$ on $\Cleav_{S^n}$, precisely:

To an affine oriented hyperplane $P \in \Hyp^{n+1}$ and $\rho \in \SO(n+1)$, letting $\rho$ act on $\RR^{n+1}$ by rotation leads $P$ to the affine oriented hyperplane $\rho.P$. Hereby $\SO(n+1)$ acts on the space $\Hyp^{n+1}$.

There is also an action of $\SO(n+1)$ on $\Ob(\Cleav_{S^n})$ rotating timber $U \in \Ob(\Cleav_{S^n})$ obtained by cleaving $S^n$ along some hyperplanes to the timber $\rho.U$ obtained by cleaving $S^n$ along the hyperplanes rotated along $\rho$. 

Let $[T,\underline{P}] \in \Cleav_{S^n}(U;k)$, for some $U \in \Ob(\Cleav_{S^n})$. Let $\rho.[T,\underline{P}] = [T,\rho.\underline{P}]$, understood in the sense that $\rho$ rotates the decorations of $T$ simultaneously. Having $[T,\underline{P}]$ an element of $\Cleav_{S^n}(U;k)$ this should be interpreted in the sense that $\rho.[T,\underline{P}]$ is an element of $\Cleav_{S^n}(\rho.U;k)$, ensuring that the decorated tree cleaving $\rho.U$ and in turn making this action of $\SO(n+1)$ on $\Cleav_{S^n}$ satisfy \ref{groupaction}.
\end{observation}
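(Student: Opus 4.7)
The plan is to verify that the prescription $\rho.[T,\underline{P}] = [T,\rho.\underline{P}]$ together with the given action on $\Ob(\Cleav_{S^n})$ realises each clause of Definition \ref{groupaction}: well-definedness on objects and on chop-equivalence classes, continuity of the resulting self-homeomorphisms, and commutativity of the large diagram (\ref{actiondiagram}). The whole argument rests on a single structural observation: $\SO(n+1)$ acts continuously on $\RR^{n+1}$ by orientation-preserving linear isometries, which induces a continuous action on $\Gr_n(\RR^{n+1})$ and hence on $\Hyp^{n+1} = \Gr_n(\RR^{n+1}) \times \RR$ sending an affine oriented hyperplane $P$ to $\rho.P$ with $(\RR^{n+1})^{\rho.P}_{\pm} = \rho\cdot (\RR^{n+1})^P_{\pm}$. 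Since $\rho$ also preserves $S^n$ setwise and preserves transversality of smooth intersections, it follows immediately that if $(T,\underline{P})$ is $U$-cleaving with leaf timber $(N_1,\dots,N_k)$ then $(T,\rho.\underline{P})$ is $\rho.U$-cleaving with leaf timber $(\rho.N_1,\dots,\rho.N_k)$.

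From here I would deduce the remaining well-definedness statements. For the object space, $\rho$ is a homeomorphism of $S^n$, so it sends components of $\complement N_i$ to components of $\complement(\rho.N_i)$ preserving contractibility; hence the restriction $\Ob(\Cleav_{S^n}) \subset \Timber_{S^n}$ of Definition \ref{NCleave}, as characterised via Remark \ref{schoenflies}, is invariant. For the $k$-ary spaces, chop-equivalence classes are preserved because two $S^n$-cleaving decorated trees with the same ordered leaf timber $(N_1,\dots,N_k)$ produce the same ordered leaf timber $(\rho.N_1,\dots,\rho.N_k)$ after rotating all hyperplanes by $\rho$. Continuity of each $\alpha_i(\rho)$ and of $.\colon \SO(n+1) \to \Homeo(\Ob(\Cleav_{S^n}))$ follows from the fact that $\Cleav_{S^n}(-;k)$ and $\Ob(\Cleav_{S^n})$ are topologised as subquotients of $\Tree(k) \times (\Hyp^{n+1})^{k-1}$ and $\Timber_{S^n}$ by Definition \ref{precleavage}, while the $\SO(n+1)$-action on $\Hyp^{n+1}$ is already continuous; bijectivity in each fixed $\rho$ is immediate since $\rho^{-1}$ provides an inverse.

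The commutativity of diagram (\ref{actiondiagram}) then reduces to a short calculation. Operadic composition in $\Cleav_{S^n}$ grafts trees and concatenates hyperplane tuples by Definition \ref{precleavage}, while $\rho$ leaves the tree shape untouched and acts diagonally on hyperplane tuples, so $\rho.((T,\underline{P}) \circ_i (T',\underline{P}')) = (T \circ_i T',\, \rho.\underline{P} \sqcup \rho.\underline{P}') = (\rho.(T,\underline{P})) \circ_i (\rho.(T',\underline{P}'))$, which is the compatibility of the two $\circ_i$-arrows. Commutativity of the squares involving $\ev_i$ and $\ev_{\inc}$ is then immediate from the identification of leaf timber of the first paragraph. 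The only point where I expect to have to tread carefully is the interaction of the action with the restricted object set and with the strict inequalities $U_{v_\pm} \neq \emptyset$ of Definition \ref{cleaveconditions}; both are handled by the observation that $\rho$ is a diffeomorphism of $S^n$, so applying it simultaneously to $U_v$ and $P_v$ preserves transversality at each internal vertex as well as non-emptiness of the two pieces of produced timber.
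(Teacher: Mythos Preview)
Your verification is correct and, in fact, supplies considerably more detail than the paper itself: the paper records \ref{sonaction} as an \emph{observation} and offers no separate proof, leaving the checks of well-definedness on chop-equivalence classes, invariance of the cleaving conditions, continuity, and commutativity of diagram~(\ref{actiondiagram}) entirely to the reader. Your argument is exactly the natural unpacking of these implicit claims and follows the same underlying idea the paper relies on, namely that $\SO(n+1)$ acts by orientation-preserving isometries of $\RR^{n+1}$ fixing $S^n$ setwise, so every ingredient of the recursive cleaving process of Definition~\ref{cleaveconditions} is carried equivariantly.
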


%\begin{definition}
The action of \ref{sonaction} defines the semidirect product of $\Cleav_{S^n}$ by $\SO(n+1)$, in the sense of \ref{semidirect}. %We denote this operad $\Cleav_{S^n}^{\SO(n+1)}$. 
%\end{definition}

\begin{observation}\label{sonextension}
We can extend the action of $\Cleav_{S^n}$ along correspondances as given in \ref{cleavageaction} to an action of $\Cleav_{S^n} \rtimes {\SO(n+1)}$ on $M^{S^n}$. This is done for $([T,\underline{P}],\rho_1,\ldots,\rho_k) \in (\Cleav_{S^n} \rtimes \SO(n+1))(-;k)$ by the following pullback-diagram:
$$
\xymatrix{
M_{[T,\underline{P}]}^{S^n} \ar[r]^{\phi_{\underline{\rho}}^*}\ar[d] & \left(M^{S^n}\right)^k\ar[d]|{\res_{\underline{\rho}}} \\
 M^{\widehat{\pi_0(\beta_{[T,\underline{P}]})}} \ar[r]^{\phi} & \prod_{i=1}^k M^{\left(\complement N_{i}\right),}
}
$$ 
where all spaces are as in \ref{cleavageaction}, as is the map $\phi$. However, the \emph{twisted restriction map} $\res_{\underline{\rho}}$ is given at the $i$'th factor of $\left(M^N\right)^k$ as $\res_{\complement N_i}.\rho_i^{-1}$. Here the map $\res_X \colon M^{S^n} \to M^X$ denotes the restriction map to the space $X \subseteq S^n$. The element $\rho_i \in \SO(n+1)$ is considered as a diffeomorphism of $S^n$, and $\res_X.\rho_i^{-1}$ denotes the precomposition of $\rho_i^{-1}$ of the domain of $f \in M^{S^n}$ prior to applying the restriction map. This preapplication of $\rho_i^{-1}$ allows us to consider $\res_{\complement N_i}.\rho_i^{-1}$ as a map that takes points of $\rho_i(\complement N_i) \subseteq S^n$ and brings these to $\complement N_i$ where a restriction map is subsequently applied. 

Note that whereas this allows us to let $\phi$ be given as the same morphism as in \ref{cleavageaction}, which in turn allows us to again identify the pullback space $M^{S^n}_{[T,\underline{P}]}$ as maps from $S^n$ to $M$ that are constant along the blueprint of $[T,\underline{P}]$. However, the fact that we are applying a twisted restriction map means that the associated map $\phi_{\underline{\rho}}^*$ in the pullback will be different from \ref{cleavageaction}. Concretely, $\phi^*_{\underline{\rho}}$ is given by having the $i$'th image maps from $S^n$ to $M$ that are constant along $\rho_i(\complement N_i)$. 

Note that this description of $\phi^*_{\underline{\rho}}$ makes this action of correspondanes respect the operadic composition, in the sense that for $([T,\underline{P}],\rho_1,\ldots,\rho_k)$ and $([T',\underline{P}'],\eta_1,\ldots,\eta_m)$ $\circ_i$-composable as elements of $\Cleav_{S^n} \rtimes \SO(n+1)$, the change of colours by $\rho_i \colon S^n \to S^n$ in the operadic composition of the semidirect product makes it necessary for commutativity of operadic associatativity diagrams to map $M^{S^n}_{[T,\underline{P}] \circ_i \rho_i.[T',\underline{P}']}$ to the $i+j-1$'th factor of $\left(M^{S^n}\right)^{k+m-1}$ that are constant along the subspace $\eta_j(\rho_i(\complement N_{i+j-1})) \subseteq S^n$, where $j \in \{1,\ldots,m\}$ and $i \in \{1,\ldots,k\}$. 

%In \cite{BargheerPuncture}, we shall see that this makes the effect of $\SO(n+1)$ non-trivial in the action of $\Cleav_{S^n} \rtimes {\SO(n+1)}$ compared to $\Cleav_{S^n}$ 
\end{observation}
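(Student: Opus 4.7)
The plan is to verify that the rule sending $([T,\underline{P}], \rho_1, \ldots, \rho_k) \in (\Cleav_{S^n} \rtimes \SO(n+1))(-;k)$ to the correspondence defined by the displayed pullback square is a morphism of coloured topological operads from $\Cleav_{S^n} \rtimes \SO(n+1)$ into $\End^{\Corr(\Top)}_{M^{S^n}}$. I would split this into three pieces: well-definedness of the correspondence, compatibility with units and the symmetric group, and compatibility with the twisted $\circ_i$-composition of \ref{semidirect}.

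For well-definedness, continuity of $\res_{\underline{\rho}}$ follows because each $\rho_i \in \SO(n+1)$ acts on $S^n$ by a diffeomorphism and precomposition with a homeomorphism is continuous in the compact-open topology; the map $\phi$ is the same as in \ref{cleavageaction}. Unpacking the pullback condition, a tuple $(g_1,\ldots,g_k)$ together with an $h \in M^{\pi_0(\beta_{[T,\underline{P}]})}$ lies in the fibre product precisely when, for each $i$, the function $g_i \circ \rho_i^{-1}$ agrees with the blueprint-constant map $\phi(h)_i$ on $\complement N_i$, which is equivalent to a constancy condition on $g_i$ along the appropriately rotated complement of $N_i$. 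This produces a canonical identification of the pullback with $M^{S^n}_{[T,\underline{P}]} = \{ f \in M^{S^n} \mid f \text{ is constant on each component of } \beta_{[T,\underline{P}]} \}$, exactly as in \ref{cleavageaction}, via the assignment sending such an $f$ to the tuple whose $i$-th entry agrees with $f$ on the $i$-th timber and is the required constant on each component of the rotated complement.

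Units and $\Sigma_k$-equivariance are immediate from the corresponding verifications in \ref{cleavagefunctor}: $e \in \SO(n+1)$ acts trivially, so the arity $1$ operation with trivial rotation recovers the identity correspondence of $M^{S^n}$; and the $\Sigma_k$-action on the semidirect product simultaneously permutes the outgoing colours of $[T,\underline{P}]$ and the entries of $\underline{\rho}$, so the induced permutation on $(M^{S^n})^k$ matches the underlying symmetry already present in \ref{cleavageaction}.

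The main step, and the principal obstacle, is checking compatibility with the $\circ_i$-composition. Given $\omega = ([T,\underline{P}], \underline{\rho})$ and $\omega' = ([T',\underline{P}'], \underline{\eta})$ composable at position $i$, the semidirect product rule yields
$$\omega \circ_i \omega' = \bigl([T,\underline{P}] \circ_i \rho_i.[T',\underline{P}'],\; \rho_1,\ldots,\rho_{i-1},\rho_i\eta_1,\ldots,\rho_i\eta_m,\rho_{i+1},\ldots,\rho_k\bigr).$$
I would verify that the correspondence assigned to $\omega \circ_i \omega'$ coincides, via the universal property of pullbacks, with the composition of the correspondences assigned to $\omega$ and $\omega'$. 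The geometric key is that $\beta_{[T,\underline{P}] \circ_i \rho_i.[T',\underline{P}']}$ is the union of $\beta_{[T,\underline{P}]}$ with $\rho_i$ applied to $\beta_{[T',\underline{P}']}$ placed inside the $i$-th timber $N_i$, so that the space of maps constant on this composite blueprint is precisely the fibre product $M^{S^n}_{[T,\underline{P}]} \times_{M^{S^n}} M^{S^n}_{[T',\underline{P}']}$, where the fibre product is formed along the $i$-th twisted leg of the first correspondence and the inclusion of the second. For the outgoing legs, the twisted restriction in inner factor $j$ of the composite uses $(\rho_i \eta_j)^{-1} = \eta_j^{-1} \rho_i^{-1}$, which is exactly the composite of the twisted restrictions $\res \circ \eta_j^{-1}$ and $\res \circ \rho_i^{-1}$ coming from the two individual correspondences. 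The subtle bookkeeping, which must be tracked carefully, is that the left multiplication of the inner rotations $\eta_j$ by the outer rotation $\rho_i$ in the semidirect product formula is precisely what is needed to make the outer twist $\rho_i$ carry the inner blueprint into the correct position inside $N_i$ when identifying the $i$-th output of $\omega$ with the input of $\omega'$. Once this matching is established, functoriality of the pullback construction assembles the identifications into the required equality of correspondences.
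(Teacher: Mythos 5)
Your proposal is correct and follows essentially the same route as the paper's own justification: keep $\phi$ untwisted so that the pullback is again identified with the maps constant along the blueprint, put all of the twisting into the restriction legs by precomposing the $i$'th factor with $\rho_i^{-1}$, and match the composite twists $\rho_i\eta_j$ of the semidirect product formula against the composition of the twisted correspondences -- your additional checks of continuity, units and $\Sigma_k$-equivariance are routine elaborations of \ref{cleavagefunctor}. One small slip in your explicit identification: the $i$'th entry of the tuple attached to $f$ must be $f$ precomposed with the rotation (so it agrees with $f\circ\rho_i^{-1}$ on the rotated timber and takes the required constants on the components of the rotated complement), not a map agreeing with $f$ on $N_i$ itself -- as written, the two prescriptions neither cover all of $S^n$ nor agree on the overlap $N_i\cap\rho_i(\complement N_i)$; since the pullback condition you state immediately beforehand already forces the corrected formula, nothing downstream is affected.
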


\begin{proposition}\label{CleaveBatalin}
Actions of $\h_*(\Cleav_{S^n} \rtimes {\SO(n+1}))$ are the same as actions of the homology of the framed little disks operad, in the sense of \cite[5.1]{WahlBat}.
\end{proposition}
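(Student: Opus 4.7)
The plan is to combine Theorem \ref{entheorem} with the general theory of semidirect products from \cite{WahlBat}. First, by Proposition \ref{contractibletimber} and Proposition \ref{evincfibration}, the hypotheses of Proposition \ref{colourtomono} are satisfied, so $\h_*(\Cleav_{S^n})$ carries the structure of a classical monochrome operad. The same reasoning applies to $\Cleav_{S^n} \rtimes \SO(n+1)$: its colours agree with those of $\Cleav_{S^n}$ and are hence contractible, and the incoming evaluation map is unchanged by the twist of Definition \ref{semidirect}, so it remains a fibration. Consequently $\h_*(\Cleav_{S^n} \rtimes \SO(n+1))$ is again a monochrome operad, and this reduction places us in the monochrome setting of \cite[2.1, 5.1]{WahlBat}.

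Next I would identify $\h_*(\Cleav_{S^n})$ with $\h_*(\Disk_{n+1})$ as $\h_*(\SO(n+1))$-module operads, where the $\SO(n+1)$-action on the left is the one from Observation \ref{sonaction} and on the right is the standard rotation action of $\SO(n+1)$ on $\RR^{n+1}$. The non-equivariant isomorphism was already established as a corollary of Theorem \ref{entheorem}; the content is that the rotation action of $\SO(n+1)$ on both operads is compatible under this isomorphism. \textbf{This is the main obstacle:} the $E_{n+1}$-functor $D$ constructed in \ref{enfunctorconstruction} uses the distinguished first coordinate axis of $\RR^{n+1}$ through Construction \ref{basetrees}, and so is not manifestly $\SO(n+1)$-equivariant. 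To circumvent this, I would exhibit the equivariance after passing to homology, where the coordinate choice becomes invisible: both $\h_*(\Cleav_{S^n})$ and $\h_*(\Disk_{n+1})$ are generated by natural geometric cycles coming from configurations inside $\RR^{n+1}$, and $\SO(n+1)$ acts on both by rotating these configurations; the isomorphism of Theorem \ref{entheorem}, applied to these generators, intertwines the two $\h_*(\SO(n+1))$-actions by naturality of the rotation.

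Having the equivariant isomorphism $\h_*(\Cleav_{S^n}) \cong \h_*(\Disk_{n+1})$ of $\h_*(\SO(n+1))$-module operads, the Künneth isomorphism together with the monochrome semidirect product construction of \cite[2.1]{WahlBat} gives
\[
\h_*(\Cleav_{S^n} \rtimes \SO(n+1)) \cong \h_*(\Cleav_{S^n}) \rtimes \h_*(\SO(n+1)) \cong \h_*(\Disk_{n+1}) \rtimes \h_*(\SO(n+1)).
\]
Since the framed little $(n+1)$-disks operad is, by the main construction of \cite{WahlBat}, the semidirect product of $\Disk_{n+1}$ by $\SO(n+1)$, the right-hand side is precisely the homology of the framed little $(n+1)$-disks operad. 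The translation of this identification into a coincidence of categories of algebras is then formal, and matches the statement of \cite[5.1]{WahlBat}; this will also set the stage for applying \cite[5.4]{WahlBat} to obtain the higher Batalin-Vilkovisky structure promised in Theorem B.
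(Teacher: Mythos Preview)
Your proof attempts to establish that the isomorphism $\h_*(\Cleav_{S^n}) \cong \h_*(\Disk_{n+1})$ is an isomorphism of $\h_*(\SO(n+1))$-module operads, and you correctly identify this as the main obstacle. However, your resolution of this obstacle is not an argument: saying that both sides are ``generated by natural geometric cycles'' on which $\SO(n+1)$ acts by rotation, and that the isomorphism of Theorem~\ref{entheorem} intertwines these actions ``by naturality'', does not constitute a proof. The isomorphism in question is produced as a zig-zag through $|\mathcal{N}(\mathpzc{K}^{n+1})| \times \Ob(\Cleav_{S^n})$, and the nerve of the full graph poset carries no visible $\SO(n+1)$-action. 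Indeed, the paper remarks immediately after the proposition that the chain of equivalences from \ref{cellulite} does \emph{not} extend to an equivalence of $\BV_{n+1}$-operads, precisely because the nerve construction does not see the rotation action. So the step you flag as the main obstacle is in fact left open at the space level, and your attempt to close it at the homology level is unsubstantiated.

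The paper avoids this issue entirely by a different route. Rather than proving equivariance of the identification, it observes that $\h_*(\Cleav_{S^n})$, being the homology of an $E_{n+1}$-operad with contractible colours, is a quadratic operad with the Gerstenhaber relations. It then invokes \cite[4.4]{WahlBat}, which for such quadratic operads gives directly that algebras over $\h_*(\Cleav_{S^n} \rtimes \SO(n+1))$ coincide with algebras over $\h_*(\Disk_{n+1} \rtimes \SO(n+1))$. The point is that once one knows the underlying homology operad is $e_{n+1}$, the quadratic presentation pins down what the semidirect product with $\SO(n+1)$ can look like algebraically, without ever needing the zig-zag of Theorem~\ref{entheorem} to be $\SO(n+1)$-equivariant. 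Your K\"unneth-and-semidirect-product approach would be more direct if the equivariance held, but establishing that equivariance is exactly what the paper's argument is designed to circumvent.
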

\begin{proof}
Since $\Cleav_{S^n}$ is a coloured $E_{n+1}$-operad; \ref{entheorem}, with contractible colours as; \ref{contractibletimber}, we have that $\h_*(\Cleav_{S^n})$ is in particular a quadratic operad, with $3$-ary operations relations determined by the Gerstenhaber relations.

Hereby, the statement follows directly by applying \cite[4.4]{WahlBat}, to see that operadic actions of $\h_*(\Cleav_{S^n} \rtimes \SO(n+1))$ agrees with operadic actions of $\h_*(\Disk_{n+1} \rtimes \SO(n+1))$.

\end{proof}

It is natural to conjecture that there is a weak equivalence of operads $\Cleav_{S^n} \rtimes \SO(n+1) \simeq \Disk_{n+1} \rtimes \SO(n+1)$. However the string of equivalences of \ref{cellulite} does not produce an equivalence extending \ref{entheorem} to the setting of $\BV_{n+1}$-operads. One notices that the intermediate terms given in \ref{cellulite} involves the nerve of the full graph operad. Heurestically, this nerve construction does not see the action of $\SO(n+1)$ on $\RR^{n+1}$. One option for proving such an extension of \ref{entheorem} could be to attempt at making $\SO(n+1)$-equivariant versions of the nerve of the full graph operad. We shall however not go into these considerations in this paper.

\pagebreak
%\section{Cacti 'Acting' on Free Loop Spaces}\input{cactiaction.tex}
\pagebreak
\bibliography{referencer}
\end{document}